\theoremstyle{theorem}
\newtheorem{theorem}{Theorem}[section]
\theoremstyle{corollary}
\newtheorem{corollary}[theorem]{Corollary}
\theoremstyle{lemma}
\newtheorem{lemma}[theorem]{Lemma}
\theoremstyle{example}
\newtheorem{example}[theorem]{Example}
\theoremstyle{definition}
\newtheorem{definition}[theorem]{Definition}
\theoremstyle{remark}
\newtheorem{remark}[theorem]{Remark}
\theoremstyle{ques}
\newtheorem{ques}[theorem]{Question}
\theoremstyle{proposition}
\newtheorem{proposition}[theorem]{Proposition}
\theoremstyle{claim}
\newtheorem{claim}[theorem]{Claim}
\newenvironment{Example}{\begin{example}\rm}{\end{example}}
\def\id{\mbox{Id} }
\def\et{\quad\mbox{and}\quad}
\DeclareMathOperator{\GL}{GL}
\DeclareMathOperator{\SL}{SL}
\DeclareMathOperator{\PSL}{PSL}
\DeclareMathOperator{\Sp}{Sp}
\DeclareMathOperator{\pr}{pr}
\DeclareMathOperator{\rank}{rank}
\DeclareMathOperator{\Ad}{Ad}
\DeclareMathOperator{\codim}{codim}
\DeclareMathOperator{\cone}{cone}
\DeclareMathOperator{\Cl}{Cl}
\DeclareMathOperator{\Cent}{C}
\DeclareMathOperator{\Center}{Z}
\DeclareMathOperator{\op}{op}
\DeclareMathOperator{\tr}{tr}
\DeclareMathOperator{\Gal}{Gal}
\DeclareMathOperator{\Aut}{Aut}
\DeclareMathOperator{\SAut}{SAut}
\DeclareMathOperator{\Pic}{Pic}
\renewcommand{\id}{\textrm{id}}
\newcommand{\Lie}[1]{\mathfrak{#1}}
\newcommand{\UUU}{\mathcal{U}}
\newcommand{\NNN}{\mathcal{N}}
\newcommand{\OOO}{\mathcal{O}}
\def\AA{\mathbb{C}}
\def\PP{\mathbb{P}}
\def\C{\mathbb{C}}
\def\CC{\mathbb{C}}
\def\ZZ{\mathbb{Z}}
\def\R{\mathbb{R}}
\newcommand{\aquot}{/ \hspace{-2.5pt} /}
\begin{document}

\title[Uniqueness of Embeddings of $\AA$ into Algebraic Groups]
{Uniqueness of Embeddings of the Affine Line into Algebraic Groups}
\author{Peter Feller and Immanuel van Santen n\'e Stampfli}
\address{Max Planck Institute for Mathematics, Vivatsgasse 7, 53111 Bonn, Germany}
\email{peter.feller@math.ch}
\address{Fachbereich Mathematik der Universit\"at Hamburg,
Bundesstra{\ss}e 55, 20146 Hamburg, Germany}
\email{immanuel.van.santen@math.ch}

\subjclass[2010]{14R10, 20G20, 14J50, 14R25, 14M15}

\begin{abstract}
	Let $Y$ be the underlying variety of a connected affine algebraic group.
	We prove that two embeddings of the affine line $\AA$
	into $Y$ are the same up to an automorphism of $Y$
	provided that $Y$ is not isomorphic to a product of a
	torus $(\CC^\ast)^k$ and one of the three varieties
	$\AA^3$, $\SL_2$, and~$\PSL_2$.
\end{abstract}

\maketitle

\section{Introduction}

In this paper, \emph{varieties} are understood to be (reduced) algebraic
varieties over the field of complex numbers $\C$, carrying the Zariski topology.
We say that two
closed\footnote{All embeddings in this paper are closed. In fact,
closedness is implied for embeddings of the affine line $\C$ into quasi-affine varieties which is the setting we are considering.} embeddings of varieties
$f,g\colon X\to Y$
are \emph{equivalent} or \emph{the same up to an automorphism of $Y$}
if there exists an automorphism $\varphi \colon Y\to Y$ such that $\varphi\circ f=g$. We consider embeddings of the affine line $\C$ into varieties $Y$ that arise as underlying varieties of affine algebraic groups
and study these embeddings up to automorphisms of $Y$. Recall that an
\emph{affine algebraic group} is a closed subgroup of
the complex general linear group~$\GL_n$ for some $n$.
In this paper, all groups are affine and algebraic.
Our main result is the following.


\begin{theorem}\label{thm:mainthm}
	{Let $Y$ be the underlying variety of a connected affine algebraic
	group. Then two
	embeddings of the affine line $\AA$ into
	$Y$ are the same up to an automorphism
	of $Y$ provided that $Y$ is not isomorphic to a product of a
	torus $(\CC^\ast)^k$ and one of the three varieties
	$\AA^3$, $\SL_2$, and~$\PSL_2$.}

\end{theorem}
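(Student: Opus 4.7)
My plan is to exploit the structural decomposition of $Y$ as a variety in order to reduce to embeddings of $\C$ into a small list of model pieces. By Mostow's theorem, $Y$ is isomorphic as a variety to $L \times \C^n$, where $L$ is the underlying variety of a maximal reductive subgroup of the given algebraic group and $\C^n$ is the unipotent radical. The automorphism group $\Aut(Y)$ then contains not only left and right group translations but also all shear-type maps $(\ell, u) \mapsto (\ell, u + \phi(\ell))$ for any regular $\phi \colon L \to \C^n$, which gives a rich supply of automorphisms with which to move a given embedding.

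The first main step is to bring any embedding $f \colon \C \to Y$ into a ``standard form.'' When $n \geq 1$, the goal is to use the available automorphisms to arrange that $\pr_L \circ f \colon \C \to L$ is constant, so that $f$ factors through a fiber $\{\ell_0\} \times \C^n$; once this is done, the question reduces to uniqueness of embeddings of $\C$ into affine spaces, which is classical for $\C^2$ (Abhyankar--Moh--Suzuki) and known for $\C^n$ with $n \geq 4$. When $n = 0$, so $Y$ is reductive, I would work inside $L$ directly: decompose $L$ (up to isogeny) as a product of a torus and simple factors, and use Bruhat-type decompositions to push the embedding into a Borel subgroup, then into a maximal unipotent subgroup, again reducing to the $\C^m$ case.

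The main obstacle is the straightening step above when $L$ is small: if $L$ contains no semisimple factor of rank $\geq 2$ (and no $\SL_2$-factor together with a unipotent direction), the available shears and translations may not suffice to kill the $L$-component of $f$. The excluded cases in the theorem---products of a torus with $\C^3$, $\SL_2$, or $\PSL_2$---correspond precisely to this breakdown: the $3$-dimensional building blocks leave too little room to straighten the embedding, while an additional $(\C^\ast)^k$ factor does not help since the torus admits no embedding of $\C$ at all. Outside these exceptions, one has enough maneuvering room to complete the reduction, and this dichotomy between ``enough room'' and ``not enough room'' is, I expect, the heart of the argument.
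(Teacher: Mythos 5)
Your starting point (the variety decomposition $Y\cong L\times\C^n$ with $L$ reductive, plus the observation that a torus factor is harmless because $\C$ admits no non-constant map to $\C^*$) agrees with the paper's Lemmas~\ref{lem.reduction_to_characterless} and~\ref{lem.Product}. But the core reduction you propose has a genuine gap. If you straighten an embedding into a fiber $\{\ell_0\}\times\C^n$ and then invoke uniqueness of embeddings of $\C$ into affine space, you are stuck exactly when $n=3$: uniqueness of embeddings $\C\to\C^3$ is a long-standing open problem, yet groups such as $\SL_2\ltimes\C^3$ (whose underlying variety is $\SL_2\times\C^3$, \emph{not} an excluded case) have a three-dimensional unipotent radical. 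The paper avoids this by running the product argument in the opposite direction (Proposition~\ref{prop.Product}): it first arranges that the projection to the \emph{semisimple} factor $H$ is an embedding, moves the curve into $H$, and then uses a graph construction to land the curve on a \emph{one-dimensional} unipotent subgroup of the other factor; Proposition~\ref{prop.firstExample} then shows all such embeddings are equivalent. The target is never ``all of $\C^n$'' but always a single one-parameter unipotent subgroup, which is what sidesteps the $\C^3$ problem.

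The reductive case ($n=0$) is where most of the paper's work lives, and your sketch does not supply a mechanism for it. ``Use Bruhat-type decompositions to push the embedding into a Borel subgroup'' is not an operation available to you: a general affine line in a simple group meets every Bruhat cell, and no translation or shear obviously confines it to $B$. Even if it did, for type $A_2$ the maximal unipotent subgroup is again $\C^3$, so finishing via the affine-space result would fail for the same reason as above. The paper instead needs the full machinery of Sections~\ref{sec.genproj} and~\ref{sec.Simple}: generic projections along conjugates of unipotent subgroups (resting on the dimension estimate $\dim(U\cap\Cl_G(v))\le\tfrac12\dim\Cl_G(v)$ for unipotent classes), Kleiman transversality to move the curve into the big open set $\pi(E)$, the locally trivial $\A^1$-bundle structure of $E$ over $\pi(E)$ (Proposition~\ref{prop.keyProperteOfPi}), and the technical Proposition~\ref{prop.key} with its rank-two case analysis. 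Your heuristic for the excluded cases is also slightly off: $\SL_2\times\SL_2$ has no simple factor of rank $\ge 2$ and no unipotent direction, yet it is covered by the theorem (via Proposition~\ref{prop.non-semisimple}), so the dichotomy is not ``rank $\ge 2$ somewhere versus not'' but rather whether the group is one of the three listed three-dimensional varieties times a torus.
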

In particular, $\AA$ embeds uniquely (up to automorphisms)
into affine algebraic
groups without non-trivial characters of dimension different than~$3$.
Note also that connectedness is not a restriction since any connected component of an algebraic group $G$ is itself isomorphic (as a variety) to the connected component of the identity element.

Let us put Theorem~\ref{thm:mainthm} in context.
Embedding problems are most classically considered for $Y = \CC^n$; compare e.g.~the overviews by
Kraft and
van den Essen~\cite{Kr1996Challenging-proble, Es2004Around-the-Abhyank}.
We recall what is known about uniqueness of embeddings of $\C$ into $\C^n$.
If $n=2$, there is a unique embedding (up to automorphisms)
by the Abhyankar-Moh-Suzuki
Theorem~\cite{AbMo1975Embeddings-of-the-, Su1974Proprietes-topolog}. For
$n\geq 4$, again there is a unique embedding
(up to automorphisms) by the
work of Jelonek; see~\cite{Je1987The-extension-of-r}.
More generally, Kaliman \cite{Ka1991Extensions-of-isom}
and Srinivas~\cite{Sr1991On-the-embedding-d} proved that smooth
varieties of dimension $d$ embed uniquely into $\C^n$ whenever $n\geq 2d+2$.
The existence of non-equivalent embeddings $\CC \to \CC^3$ is a
long standing open problem; see~\cite{Kr1996Challenging-proble}.
There are various potential examples of non-equivalent embeddings of
$\CC$ into $\CC^3$; see e.g.~\cite{Sh1992Polynomial-represe}.

Srinivas' result is established by cleverly projecting to different linear coordinates.
The second author was able to use projections to coordinates to establish that there is a unique embedding of $\C$ into $\SL_n$ (up to automorphisms) for all integers $n\geq 3$; see \cite{St2015Algebraic-Embeddin}. For algebraic groups in general, projections to coordinates are no longer available. Our approach to embeddings of $\C$ is to study projections onto quotients by unipotent subgroups. 

For a different point of view we consider the notion of flexible varieties as studied by Arzhantsev, Flenner, Kaliman, Kutzschebauch, and
Zaidenberg in~\cite{ArFlKa2013Flexible-varieties}. Flexible varieties can be seen as generalization of connected affine algebraic groups without non-trivial characters.
Smooth irreducible affine flexible varieties of dimension at least $2$ have the property that all embeddings of a fixed finite set are equivalent~\cite[Theorem~0.1]{ArFlKa2013Flexible-varieties}. Theorem~\ref{thm:mainthm} states that in most affine algebraic groups even all embeddings of $\C$ are equivalent.
In light of Theorem~\ref{thm:mainthm}, the following question is
natural in this context.
\begin{ques}\label{qu:felxvarhaveuniqueembeddings}
Let $Y$ be a smooth irreducible affine
flexible variety of dimension at least $4$.
Is there at most one embedding of $\AA$ into $Y$ up to automorphisms?
\end{ques}

There exist smooth irreducible flexible affine surfaces
that contain non-equivalent embeddings of $\CC$;
see Example~\ref{exa.Dimension2}. Since in dimension three
there is the long standing open problem, whether all
embeddings of $\CC$ into $\CC^3$ are equivalent, we
ask Question~\ref{qu:felxvarhaveuniqueembeddings} only for varieties of
dimension $\geq 4$.
In Example~\ref{exa.Dimension3},
we provide a contractible smooth affine irreducible surface $S$ such that
$S\times \AA^n$ contains non-equivalent embeddings of
$\AA$ for all integers $n \geq 1$.
These examples of varieties that contain non-equivalent embeddings
of $\CC$ are the content of Section~\ref{Sec.Non-Equivalent-Examples}.

Note that some sort of `flexibility' is required to prove results such as
Theorem~\ref{thm:mainthm} in case one has `many' embeddings of $\CC$.
For example, if every pair of points in an affine variety $Y$ can be
connected by a chain of embedded affine lines\footnote{Compare
the notion of $\mathbb{A}^1$-chain connectedness in
\cite{AsMo2011Smooth-varieties-u} and
rationally chain connectedness in \cite{Ko1996Rational-curves-on}.}
 and $Y$ admits a non-trivial
$\CC^+$-action, then flexibility of $Y$ is a necessary condition for
the equivalence of all embeddings $\CC \to Y$.

Theorem~\ref{thm:mainthm} can be seen as covering all cases of embeddings of
$\AA$ into connected affine algebraic groups without non-trivial characters except the well-known open problem of embeddings into $\AA^3$ and embeddings into $\SL_2$ and $\PSL_2$. As argued by the second author 
in~\cite{St2015Algebraic-Embeddin},
$\SL_2$ (and in fact similarly $\PSL_2$) allows for many embeddings of $\CC$
and perceivably their equivalence or non-equivalence up to
automorphism might be as challenging as for the $\AA^3$ case.
In Section~\ref{sec:embinto3dgroups}, we report on these examples
of embeddings into $\CC^3$, $\SL_2$ and $\PSL_2$.


\subsection{Tools for the proof of Theorem~\ref{thm:mainthm}}

In Section~\ref{sec.notions}, notions and basic facts
from the theory of algebraic groups and there principal bundles are introduced.

In order to prove equivalence of embeddings we need a good
way to construct automorphisms. This is the content of Section~\ref{sec.Auto}.
Let us expand on that.
While we are only interested in showing uniqueness of embeddings
up to automorphisms of the \emph{underlying variety} of
an algebraic group, we will heavily depend on the group structure to construct automorphisms. The following shearing-tool
follows readily by using the group structure;
see Proposition~\ref{prop.ConstructionOfAuto}. It is our main
tool to construct automorphisms of the underlying variety of an algebraic
group.

\begin{quote}
\textbf{Shearing-tool.}
Let $X$ and $X'$ be affine lines embedded in an algebraic group
$G$ and let $H \subseteq G$ be a closed subgroup such that $G/H$
is quasi-affine. If $\pi \colon G \to G/H$ restricts to an embedding on
$X$ and $X'$
and if $\pi(X)=\pi(X')$, then there exists a $\pi$-fiber-preserving
automorphism of $G$ mapping $X$ to $X'$.
\end{quote}

This could be seen as an analog to a fact used in proving Srinivas'
result about embeddings into $\C^n$: given two embeddings
$\sigma,\sigma'$ of an affine line (or in fact any affine variety) into
$\C^n$ such that the last $m<n$ coordinate functions agree and yield
an embedding into $\C^m$, then there exists a shear $\phi$ of $\C^n$
with respect to the projection to the last
$m$ coordinates such that $\phi\circ \sigma=\sigma'$; see
\cite{Sr1991On-the-embedding-d}.

In Section~\ref{sec.embedwithunipotentimage}, we show that all embeddings
$\C \to G$ with image a one-dimensional unipotent subgroup of $G$
are equivalent. Thus in order
to prove equivalence of all embeddings $\CC \to G$, it suffices
to show that every affine line in $G$
can be moved via an automorphism of $G$ into a
one-dimensional unipotent subgroup.

In Section~\ref{sec.genproj}, we introduce another tool.
In view of the above shearing-tool, given a curve $X$ in $G$,
we are interested in having many closed subgroups $H$ such that $X$
projects isomorphically (or at least birationally) to $G/H$. We establish several
results in that direction and we call them generic projection results.
In this context our main result is the following;
see Proposition~\ref{prop.UnipotProj}. It is based on an elegant
formula that relates the dimension of the conjugacy class $C$ of a
unipotent element in a semisimple group with the dimension
of the intersection of $C$ with a maximal unipotent subgroup;
see \cite{St1976On-the-desingulari}
and \cite[\S6.7]{Hu1995Conjugacy-classes-}.

\begin{quote}
\textbf{Main generic projection result.}
	If $G$ is a simple algebraic group of rank at least two,
	and $H$ a closed unipotent subgroup, then for any
	curve $X \subseteq G$ that is isomorphic to $\CC$
	there exists an automorphism $\varphi$ of $G$ such that for generic
	$g \in G$ the quotient map $G \to  G / g H g^{-1}$
	restricts to an embedding on $\varphi(X)$.
\end{quote}

\subsection{Outline of the proof of Theorem~\ref{thm:mainthm}}

In Section~\ref{sec.ReductionSemisimple}, we
reduce Theorem~\ref{thm:mainthm} to the case of a semisimple group.
In a bit more detail: let $G$ be an algebraic group satisfying the assumptions
of Theorem~\ref{thm:mainthm}.
We note that $G$ is isomorphic
(as a variety) to $G^u\times (\C^*)^k$ for some integer $k \geq 0$,
where $G^u$ denotes the normal subgroup of $G$ generated by
unipotent elements. Embeddings of $\C$ into $G^u\times (\C^*)^n$ are necessarily
constant on the second factor; thus we study embeddings into $G^u$. We have
that $G^u$ is isomorphic as a variety to
$R_u(G^u) \times G^u/R_u(G^u)$, where $R_u(G^u)$ denotes the
unipotent radical---the largest normal unipotent subgroup of $G^u$.
If $R_u(G^u)$ is non-trivial nor equal to $G^u$, then the non-trivial product
structure on $G^u$ allows
to show equivalence of all embedded affine lines;
see Proposition~\ref{prop.Product}.
If $R_u(G^u)=G^u$, then $G^u \cong \C^n$ for some $n\neq 3$, and the
result follows by Jelonek's work (for $n\geq 4$) and by the
Abhyankar-Moh-Suzuki Theorem (for $n=2$). This leaves the case where
$R_u(G^u)$ is trivial, i.e.~$G^u$ is semisimple.

In Section~\ref{sec.ReductionSimple}, we prove
Theorem~\ref{thm:mainthm} in case of a semisimple, but not simple group $G$.
We use that $G$ is isomorphic to a quotient of the product of at least two
simple groups
by a finite central subgroup.
Part of the argument relies on the fact that simple groups have sufficiently
many unipotent elements. To ensure this, the classification of simple groups
of small rank is invoked; see Lemma~\ref{lem.enoughUnipotentsInAParabolic}.

Finally, in Section~\ref{sec.Simple}, we prove
Theorem~\ref{thm:mainthm} in the case of a simple
group $G$. This constitutes the technical heart of the proof. Besides using several results from previous sections about embeddings into products and generic projection results, we use the language of algebraic group theory to define an interesting subvariety $E$ of $G$. In fact, $E$ is the preimage of the (unique) Schubert curve under the projection to $G/P$, where $P$ is a maximal parabolic subgroup of $G$. We show that any embedding of the affine line in $G$ can be moved into $E$ by an automorphism of $G$; compare Subsection~\ref{sec.MovingIntoE}. This is in fact the key step in our proof.
Let us expand on this.

{	
	Let $P^-$ be an opposite parabolic subgroup to $P$ and denote
	by $\pi \colon G \to G/ R_u(P^-)$ the quotient map.
	We establish, that the restriction of $\pi$ to $E$ is a locally
	trivial $\CC$-bundle over $\pi(E)$
	and $\pi(E)$ is a big open subset of $G/ R_u(P^-)$,
	i.e.~the complement is a closed subset of codimension at least two in
	$G/ R_u(P^-)$; see Proposition~\ref{prop.keyProperteOfPi}.
	Now, one can move $X$ into $E$ via the following steps.
	\begin{itemize}
		\item Using our main generic projection result, we can achieve
			that $\pi$ restricts to an embedding on $X$.
		\item Using that $\pi(E)$ is a big open subset of $G/R_u(P^-)$,
			 we can move $X$ into $\pi^{-1}(\pi(E))$ by left multiplication
			 with a group element. In particular,
			 $\pi$ still restricts to an embedding on $X$,
			 by $G$-equivariancy.
		\item Since $E \to \pi(E)$ is a locally trivial $\CC$-bundle, it
			has a section $X' \subseteq E$ over
			$\pi(X) \cong \CC$. Therefore,
			we can move $X$ into $X'$ with our shearing-tool.
	\end{itemize} }

Next we exploit that $E=KP$ for a certain non-trivial closed subgroup
$K$ of $G$ and the parabolic subgroup
$P$ used to define $E$. Under the assumption that the rank of
$G$ is at least two, i.e. $G$ is different from $\SL_2$ and $\PSL_2$, we show the
following. Via an automorphism
of $G$ one can move
any affine line in $E$ to an affine line in $E$ such that the quotient map
$E \to K \backslash E$ restricts to an embedding on this affine line;
see Proposition~\ref{prop.key}.
Using this result and the fact that the product
map $K \times P \to E$ is a principal $K\cap P$-bundle we can move
any affine line in $E$ into an affine line in $P$. Since $P$
is a proper subgroup of $G$, one can move any affine line in $P$
into a one-dimensional unipotent subgroup of $G$.
This implies Theorem~\ref{thm:mainthm} in this last case.

\subsection{Overview of the appendices}

We have three appendix sections, which contain results that are used in the proof of Theorem~\ref{thm:mainthm}, but that are either classical or the proofs are independent of the general idea of the proof of Theorem~\ref{thm:mainthm}. Appendix~\ref{sec.PrincipalBundlesOverA1} provides a proof of the fact that principal $G$-bundles over the affine line are trivial for all affine algebraic
groups $G$. In Appendix~\ref{sec.Genonparabsubgroups}, we provide generalities on parabolic subgroups of reductive groups and the dimension of their
subvariety of unipotent elements
and their unipotent radical as needed in Section~\ref{sec.Simple}.
In Appendix~\ref{sec.C-equivmorphofsurf}, we provide results about
$\CC^+$-equivariant morphisms of surfaces
as needed in the proof of Proposition~\ref{prop.key}
(which constitutes the most technical part of Section~\ref{sec.Simple}).

\subsection*{Acknowledgments}
We thank J\"org Winkelmann for informing us about
Lemma~\ref{lem.OneDimProj}, Adrien Dubouloz
for many discussions concerning the non-equivalent embeddings
in Section~\ref{Sec.Non-Equivalent-Examples}, and Julie Decaup and
Adrien Dubouloz for allowing us to insert Example~\ref{exa.Dimension2}.

\section{Examples of varieties that contain non-equivalent embeddings of $\AA$}\label{sec.examplesofnonequivembed}
\label{Sec.Non-Equivalent-Examples}

In the first example we provide an irreducible smooth
affine flexible surface
that contains non-equivalent embeddings of $\AA$.
This example is due to Decaup and Dubouloz. For a deeper study
of this example see \cite{DeDu2016Affine-Lines-in-th}.

\begin{Example}
\label{exa.Dimension2}
{
Let $S = \PP^2 \setminus Q$, where $Q$
is a smooth conic in $\PP^2$. Clearly, $S$ is irreducible, smooth and affine.
Let $(x: y: z)$ be a homogeneous coordinate system of $\PP^2$.
We can assume without loss of generality that $Q$ is given by the
homogeneous equation $xz = y^2$ in $\PP^2$.
}

Let $L_1$ be the curve $S \cap \{ z = 0\}$ and let $L_2$ be the curve
$S \cap \{xz-y^2 = z^2\}$. One can see that $\Pic(S \setminus L_1)$ is trivial,
whereas $\Pic(S \setminus L_2)$ is isomorphic to $\ZZ / 2 \ZZ$. Hence
there are non-equivalent embeddings of $\CC \cong L_1 \cong L_2$ into $S$.

To establish the flexibility of $S$, we have to show that
$\SAut(S)$ acts transitively on $S$ where
$\SAut(S)$ denotes the subgroup of $\Aut(S)$ that
is generated by all automorphisms coming from $\CC^+$-actions on $S$;
see~\cite[Theorem~0.1]{ArFlKa2013Flexible-varieties}.
Consider the $\CC^+$-action
$t \cdot (x : y: z) = (x: y + tx : z + 2yt+t^2 x)$ on $S$.
A computation shows that every orbit of this $\CC^+$-action
intersects the curve $L_2$. Since $L_2$ is an orbit of the
$\CC^+$-action $t \cdot (x : y: z) = (x+2yt+t^2z: y + tz : z)$ on $S$,
it follows that $\SAut(S)$ acts transitively on $S$.

\end{Example}

	Next, we give in any dimension $\geq 3$
	an example of an irreducible
	smooth contractible affine variety that contains
	non-equivalent embeddings of $\CC$.
	Note that for any irreducible smooth contractible affine variety,
	the ring of regular functions
	is a unique factorization domain and all invertible functions on it are constant;
	see e.g.~\cite[Proposition~3.2]{Ka1994Exotic-analytic-st}.
	
\begin{Example}
	\label{exa.Dimension3}
	Let $S$ be an irreducible smooth
	contractible affine surface of logarithmic Kodaira
	dimension one that contains a copy $C$ of the affine line.
	For example, by \cite[Theorem~A]{DiPe1990Contractible-affin}
	the affine hypersurface in $\AA^3$ defined by
	\[
		z^2 x^3 + 3zx^2 + 3x -zy^2-2y = 1
	\]
	is smooth, contractible and of logarithmic Kodaira dimension one,
	and $z=0$ inside this hypersurface defines a
	copy of $\AA$. Since $S$ is smooth, affine and of
	logarithmic Kodaira dimension one, there exists no ${\C^+}$-action on $S$, by
	\cite[Lemma~1.3]{MiSu1980Affine-surfaces-co}.
	In other words, the Makar-Limanov invariant of
	$S$ is equal to the ring of regular functions on $S$.
	Now, by \cite[Corollary 5.20]{Cr2004On-the-AK-invarian},
	it follows that the Makar-Limanov invariant of
	\[
		S \times \AA^n
	\]
	is equal to the ring of regular functions on $S$. In particular,
	every automorphism of $S \times \AA^n$ maps fibers of the canonical
	projection $\pi \colon S \times \AA^n \to S$ to fibers of it. Thus any copy
	of $\AA$ inside $S \times \AA^n$ that lies in some fiber of $\pi$
	is non-equivalent to the section $C \times \{ 0 \} \subseteq S \times \AA^n$
	of $\pi$ over $C$.
	In summary, we proved that $S \times \AA^n$
	is irreducible, affine, smooth, contractible and
	contains non-equivalent copies of $\AA$, provided
	that $n \geq 1$.
\end{Example}

To compare Example~\ref{exa.Dimension2} and Example~\ref{exa.Dimension3},
note that there exists no smooth irreducible
affine surface that is \emph{contractible}
and contains two non-equivalent copies of $\AA$.
Indeed, smooth homology planes of logarithmic Kodaira dimension one or two,
contain at most one copy of $\AA$ and smooth homology
planes of logarithmic Kodaira dimension zero do not exist; see e.g.~\cite{GuMi1992Affine-lines-on-lo}. If the logarithmic Kodaira dimension
of a smooth, contractible affine surface is
$-\infty$, then it must be $\AA^2$ by
Miyanishi's characterization of the affine plane; see~\cite{Mi1975An-algebraic-chara} and~\cite{Mi1984An-algebro-topolog}. Thus,
the {Abhyankar-Moh-Suzuki Theorem} implies our claim.

\section{\texorpdfstring{Examples of embeddings of $\AA$ into
$\AA^3$, $\SL_2$ and $\PSL_2$}
{Examples of embeddings of C into
C3, SL2 and PSL2}}

\label{sec:embinto3dgroups}
In this section we discuss what is known about embeddings of
$\C$ into $\C^3$ and give embeddings of $\C$ into $\SL_2$ and
$\PSL_2$ arising from embeddings of $\C$ into $\C^3$. 

\subsection{Embeddings into $\CC^3$}
After Abyankar and Moh and, independently, Suzuki established uniqueness of embeddings of $\AA$ into $\AA^3$, many examples of embeddings of $\AA$ into $\AA^3$ that are potentially different (up to automorphisms) from the standard embedding $\AA\to\AA^3$, $t \mapsto (t,0,0)$ where suggested; many of these have since been proven to be standard; compare e.g.~\cite{Es2004Around-the-Abhyank}. However, examples due to Shastri, which are based on the idea of using embeddings with real coefficients such that the restriction map $\R\to\R^3$ is knotted, seem among the most promising to be non-standard. Concretely, the embeddings $\AA\to\AA^3$
\[
	t\mapsto (t^3-3t,t^4-4t^2,t^5-10t) \et t\mapsto (t^3-3t,t(t^2-1)(t^2-4),t^7-42t) \, ,
\]
which restrict to embeddings $\R\to\R^3$ of a trefoil knot and a figure eight knot, respectively, are not known to be standard; see~\cite{Sh1992Polynomial-represe}.

\subsection{\texorpdfstring{Comparison of embeddings into $\CC^3$ and $\SL_2$}{Comparison of embeddings into C3 and SL2}}
\label{subsec.ComparisonC3andSL2}
Embeddings of $\AA$ into $\SL_2$ are less studied.
Following an example of the second author
(compare~\cite{St2015Algebraic-Embeddin}), we briefly discuss how embeddings into $\AA^3$ give rise to embeddings into $\SL_2$.
In fact, for any embedding $h$ of $\AA$ into $\C^3$ there exists an
automorphism $\varphi$ of $\C^3$ such that
\begin{equation}\label{eq:embintoSL2}t\mapsto
		\begin{pmatrix}
                  	f_1(t) & (f_1(t)f_3(t)-1) / f_2(t) \\
                  	f_2(t) & f_3(t)
                \end{pmatrix}
\end{equation}
defines an embedding of $\AA$ into $\SL_2$
where $f_1$, $f_2$ and $f_3$ are the components of $f = \varphi \circ h$.
In fact, it suffices to arrange that $f_2$ divides $f_1f_3-1$
in $\C[t]$, which is explicitly done in~\cite{St2015Algebraic-Embeddin}.

On the other hand, if we start with an embedding $g$ of $\CC$ into $\SL_2$,
then there exists an automorphism $\psi$ of $\SL_2$ such that
$p \circ \psi \circ g$ is an embedding of $\CC$ into $\CC^3$ where
$p \colon \SL_2 \to \CC^3$ is the projection to three coordinate functions
of $\SL_2$; see \cite[Lemma~10]{St2015Algebraic-Embeddin}.

\subsection{\texorpdfstring{Comparison of embeddings into
$\SL_2$ and $\PSL_2$}{Comparison of embeddings into SL2 and PSL2}}
	
In this subsection we construct a natural surjective map
from the set of all embeddings of $\CC$ into $\PSL_2$
to the set of all embeddings of $\CC$
into $\SL_2$ where we consider the embeddings up to automorphisms.
Thus, using Subsection~\ref{subsec.ComparisonC3andSL2}, every embedding
of $\CC$ into $\CC^3$ gives rise to an embedding of $\CC$ into $\PSL_2$.

By Hurwitz's Theorem,
every finite \'etale morphism $E \to \CC$ is trivial in the sense
that every connected component of $E$ maps isomorphically onto $\CC$;
see e.g.~\cite[Chp. IV, Corollary~2.4]{Ha1977Algebraic-geometry}).
In particular, every embedding of $\CC$ into $\PSL_2$ lifts
via the canonical quotient $\eta \colon \SL_2 \to \PSL_2$
to two embeddings into $\SL_2$, which are the same up to
the involution $X \mapsto -X$ of $\SL_2$.
Since every automorphism of $\PSL_2$ lifts to an automorphism
of $\SL_2$ via $\eta$
(see~\cite[Proposition~20]{Se1958Espaces-fibres-}), we constructed a well-defined map
\begin{align*}
	\Xi \colon & \{ \, \textrm{Embeddings of $\CC$ into $\PSL_2$ up
	automorphisms of $\PSL_2$} \, \} \to \\
	& \{ \, \textrm{Embeddings of $\CC$ into $\SL_2$ up to
	automorphisms of $\SL_2$} \, \} \, .
\end{align*}
We claim that $\Xi$ is surjective. For this, let $f \colon \CC \to \SL_2$ be
an embedding. It is enough to prove that there exists
an automorphism $\varphi$ of $\SL_2$
such that $\eta \circ \varphi \circ f$ is an embedding into $\PSL_2$. Since
$\eta \circ \varphi \circ f$ is always immersive and proper,
we only have to prove injectivity of $\eta \circ \varphi \circ f$.
Let $\pi_i \colon \SL_2 \to \CC^2 \setminus \{ 0 \}$
be the projection to the $i$-th column.
We can assume, after composing $f$
with an automorphism of $\SL_2$,
that $\pi_1 \circ fÊ\colon \CC \to \CC^2 \setminus \{ 0 \}$
is immersive; see
\cite[Lemma~10]{St2015Algebraic-Embeddin}. Let $C$
be the image of $\pi_1 \circ f$, which is closed in $\CC^2 \setminus \{Ê0 \}$.
There is a commutative diagram
\[
	\xymatrix{
		\SL_2 \ar[d]_-{\eta} \ar[r]^-{\pi_1} & \CC^2 \setminus \{ 0Ê\} \ar[d]^-{\rho} \\
		\PSL_2 \ar[r] & V
	}
\]
where $\rho \colon \CC^2 \setminus \{Ê0 \} \to V$ denotes the quotient by
the $\ZZ / 2 \ZZ$-action $(x, z) \mapsto (-x, -z)$ on $\CC^2 \setminus \{ 0 \}$. Let $Z = \rho(C)$.
Since the morphism $\rho$ is \'etale, it follows that
$\rho \circ \pi_1 \circ f \colon \C \to Z$ is immersive and hence birational.
Let $Z_0 \subseteq Z$ be a finite subset such that $\rho \circ \pi_1 \circ f$
restricts to an isomorphism
$\CC \setminus (\rho \circ \pi_1 \circ f)^{-1}(Z_0) \cong Z \setminus Z_0$.
Let $T$ be the finite set $(\rho \circ \pi_1 \circ f)^{-1}(Z_0)$.
There exists a morphism $p \colon \CC^2 \to \CC$ such that for all $t \neq s$ in $T$
we have
\begin{align}
	 \label{eq.Comparison}
	 & (\pi_2 \circ f)(t) + p((\pi_1 \circ f)(t)) \cdot (\pi_1 \circ f)(t) \neq \\
	\notag
	& \pm \left[ (\pi_2 \circ f)(s) + p((\pi_1 \circ f)(s)) \cdot (\pi_1 \circ f)(s) \right] \, .
\end{align}
Indeed, such a $p$ exists, since for all $t \neq s$ in $T$
the negation of condition~\eqref{eq.Comparison} defines two non-trivial
affine linear equations for $p$
in the vector space of functions $\CC^2 \to \CC$.
Let $\varphi \colon \SL_2 \to \SL_2$ be the automorphism
given by
\[
	\varphi
	\begin{pmatrix}
		x & y \\
		z & w
	\end{pmatrix} =
	\begin{pmatrix}
		x & y + xp(x, z) \\
		z & w + zp(x, z)
	\end{pmatrix} \, .
\]
Let $g = \varphi \circ f$.
Note that $\pi_1 \circ g = \pi_1 \circ f$ and
$\pi_2 \circ g = \pi_2 \circ f + (p \circ \pi_1 \circ f) \cdot (\pi_1 \circ f)$.
Since $\rho \circ \pi_1 \circ f$ restricts to an isomorphism
$\CC \setminus T \cong Z \setminus Z_0$,
it follows that $\eta \circ g$ restricted to $\CC \setminus T$ is injective.
By~\eqref{eq.Comparison}, we have
$(\eta \circ g)(t) \neq (\eta \circ g)(s)$ for all $t \neq s$ in $T$
and thus $\eta \circ g$ restricted to $T$ is injective.
Since the images under $\eta \circ g$
of $\CC \setminus T$ and $T$ are disjoint, it follows that
$\eta \circ g$ is injective, which implies our claim.


\section{Notation and generalities on algebraic groups and their principal bundles}
\label{sec.notions}
\subsection{Algebraic groups}

For the basic results on algebraic groups we refer to
\cite{Hu1975Linear-algebraic-g}
and for the basic results about Lie algebras and root systems
we refer to \cite{Hu1978Introduction-to-Li}.
In order to set up conventions, let us recall the basic terms.
A connected non-trivial algebraic group $G$ is called \emph{semisimple}
if it has a trivial radical $R(G)$, where $R(G)$
is the largest connected normal solvable subgroup of $G$.
An algebraic group $G$ is called \emph{reductive} if
it has a trivial unipotent radical $R_u(G)$, where $R_u(G)$
is the closed normal subgroup of $R(G)$ consisting
of all unipotent elements.
A non-commutative connected algebraic group $G$
is called $\emph{simple}$, if it contains
no non-trivial closed connected normal
subgroup. Note that for a simple algebraic group
$G$, the quotient $G/\Center(G)$ by the center $\Center(G)$
is simple as an abstract group
(see \cite[Corollary~29.5]{Hu1975Linear-algebraic-g}),
i.e.~it contains no proper normal subgroup.

For any connected algebraic group $G$, we denote by $\UUU_G$ the subset
of unipotent elements in $G$. It is irreducible and closed in $G$;
see~\cite[Theorem~4.2]{Hu1995Conjugacy-classes-}. We denote
by $\rank(G)$ the dimension of a maximal torus of $G$. By
\cite[\S4.2]{Hu1995Conjugacy-classes-} we have for any reductive
group $G$
\[
	\dim \UUU_G = \dim G - \rank G
\]
and using the Levi decomposition
(see \cite[Theorem~4, Chp.~6]{OnVi1990Lie-groups-and-alg})
this formula holds more generally for every connected algebraic group $G$.
Moreover, we denote by $G^u$ the normal subgroup which is generated by
all unipotent elements of $G$. It is connected and closed in $G$; see
\cite[Proposition~7.5]{Hu1975Linear-algebraic-g}.
For any semisimple $G$, we have $G = G^u$;
see \cite[Theorem~27.5]{Hu1975Linear-algebraic-g}.

We use $\Lie{g}$ to denote the Lie algebra of an algebraic group $G$.
Moreover, we denote by $\NNN_{\Lie{g}}$ the closed irreducible cone
of nilpotent elements inside $\Lie{g}$. Note that the exponential
$\exp \colon \Lie{g} \to G$ restricts to an isomorphism of affine
varieties $\exp \colon \NNN_{\Lie{g}} \to \UUU_{G}$.


\subsection{Principal bundles}

Our general reference for principal bundles is \cite{Se1958Espaces-fibres-}.
Again, in order to set up conventions, let us recall the basic terms.
Let $G$ be any algebraic group. A \emph{principal} $G$-bundle is a variety
$P$ with a right $G$-action
together with a $G$-invariant morphism
$\pi \colon P \to X$ such that locally on $X$, $\pi$ becomes a trivial principal
$G$-bundle after a finite \'etale base change. If one can choose
these \'etale base changes to be open injective immersions, then we say
$\pi$ is a \emph{locally trivial principal $G$-bundle}.

The most prominent example of a principal bundle in this article is the
following: let $G$ be an algebraic group and let $H$ be a closed subgroup.
Then $G \to G/H$ is a principal $H$-bundle; see~\cite[Proposition~3]{Se1958Espaces-fibres-}. If $H$ is a group without
characters, then the quotient $G/H$ is quasi-affine
(see \cite[Example~3.10]{Ti2011Homogeneous-spaces}) and if $H$ is
normal in $G$ or reductive, then $G/H$ is affine (see \cite[Theorem~3.8]{Ti2011Homogeneous-spaces}).
For any algebraic group $G$, any principal $G$-bundle over $\C$
is trivial; see Appendix~\ref{sec.PrincipalBundlesOverA1}.

\section{Construction of automorphisms of an algebraic group}
\label{sec.Auto}

In this section we introduce a construction of automorphisms
of algebraic groups that we use throughout this article.

Let $G$ be an algebraic group. Let $H \subseteq G$ be a closed subgroup
and let $\pi \colon G \to G/H$ be the quotient by left $H$-cosets.
For any morphism $f \colon G/H \to H$, the map
\[
	\varphi_f \colon G \longrightarrow G \, , \quad g \mapsto g f(\pi(g))
\]
is an automorphism of $G$ that preserves the quotient $\pi$.
Let $\rho \colon G \to H \backslash G$ be the quotient by right $H$-cosets.
Analogously to $\varphi_f$,
we define for any morphism $d \colon H \backslash G \to H$
the automorphism
\[
	\psi_d \colon G \longrightarrow G \, , \quad g \mapsto d(\rho(g)) g \, .
\]

We will frequently use
this construction in the following special situation. Assume that $H$ is a closed
unipotent subgroup, whence $G/H$ is quasi-affine. Let $X \subseteq G$ be
a closed curve that has only one smooth point at infinity, i.e.~there exists a projective curve $\bar{X}$ that contains $X$
as an open subset and $\bar{X} \setminus X$
consists only of one point that is a smooth point of $\bar{X}$.
Assume that the quotient $\pi$ restricts to
an embedding on $X$. Thus $X$ is a section of the
principal $H$-bundle $\pi^{-1}(\pi(X)) \to \pi(X)$.
Let $X'$ be another section of $\pi^{-1}(\pi(X)) \to \pi(X)$ and denote by
$s \colon \pi(X) \to X$ and $s' \colon \pi(X) \to X'$
the inverse maps of $\pi |_{X} \colon X \to \pi(X)$ and
$\pi |_{X'} \colon X' \to \pi(X)$, respectively.
Consider the morphism
\begin{equation}
	\label{eq.moveing}
	\pi(X) \longrightarrow H \, , \quad v \mapsto (s(v))^{-1} \cdot s'(v) \, .
\end{equation}
Since $G/H$ is quasi-affine and since $\pi(X)$ has only one smooth
point at infinity, the curve $\pi(X)$ is closed in any affine variety that contains
$G/H$ as an open subvariety. Since $H$ is unipotent and thus
an affine space, \eqref{eq.moveing}
can be extended to a morphism $f \colon G /H \to H$. Clearly, the automorphism
$\varphi_f$ satisfies $\varphi_f(X) = X'$. Roughly speaking, $\varphi_f$
moves $X$ into $X'$ along the fibers of $\pi$.

If $X$ happens to be the affine line,
then it is enough to assume that $G/H$ is quasi-affine in order to move
$X$ into another section along the fibers of $\pi$.
Indeed, since $G / H$ is quasi-affine,
there exists a retraction of $G / H$ to $\pi(X) \cong \AA$
and therefore the morphism in \eqref{eq.moveing} can be extended
to a morphism $f \colon G / H \to H$.
In summary we proved the following result and its analog for right coset spaces.

\begin{proposition}
	\label{prop.ConstructionOfAuto}
	Let $G$ be an algebraic group and let $H$ be a closed subgroup
	such that $G/H$ is quasi-affine. If $X$ is a closed curve in $G$ that is
	isomorphic to $\AA$ such that $\pi \colon G \to G/H$
	restricts to an embedding on $X$ and if $X'$ is another section of
	$\pi^{-1}(\pi(X)) \to \pi(X)$, then there exists an automorphism
	$\varphi$ of $G$ that preserves $\pi$ and maps $X$ onto $X'$.
\end{proposition}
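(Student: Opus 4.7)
The plan is to produce $\varphi$ by the shearing construction $\varphi_f$ introduced at the beginning of this section, with $f$ chosen so that it records exactly the ``displacement'' from $X$ to $X'$ along the $H$-fibers of $\pi$. Concretely, since $\pi$ restricts to an embedding on $X$, the inverse $s\colon \pi(X)\to X$ of $\pi|_X$ is a morphism, and by assumption on $X'$ we likewise have a morphism $s'\colon \pi(X)\to X'$ inverse to $\pi|_{X'}$. For each $v\in\pi(X)$ the two points $s(v)$ and $s'(v)$ lie in the same left $H$-coset, so the formula
\[
	\mu\colon \pi(X)\longrightarrow H, \qquad v\longmapsto s(v)^{-1}\cdot s'(v)
\]
defines a regular morphism into $H$. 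If I can extend $\mu$ to a morphism $f\colon G/H\to H$, then $\varphi_f(g)=g\cdot f(\pi(g))$ automatically preserves $\pi$ (since $f(\pi(g))\in H$) and sends any $s(v)\in X$ to $s(v)\cdot \mu(v)=s'(v)\in X'$, so $\varphi_f(X)=X'$. The map $\varphi_f$ is a morphism of varieties with two-sided inverse $g\mapsto g\cdot f(\pi(g))^{-1}$, hence an automorphism of the underlying variety of $G$, as required.

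The entire content of the statement therefore reduces to extending the morphism $\mu$ defined on $\pi(X)$ to a morphism on all of $G/H$, and this is the main obstacle. Here I would exploit the two hypotheses in tandem: $X\cong \AA$ forces $\pi(X)\cong\AA$, while $G/H$ being quasi-affine lets me embed $G/H$ as an open subvariety of an affine variety. The idea is to construct a regular retraction $r\colon G/H\to \pi(X)\cong \AA$; such a retraction amounts to one global regular function on $G/H$ restricting to a coordinate on the closed subvariety $\pi(X)$, and its existence is a special feature of quasi-affine targets combined with the affine-line source. Once $r$ is in hand, $f:=\mu\circ r\colon G/H\to H$ does the job.

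Finally, to double-check that no hidden issue arises, I would verify: (i)~the cocycle-style identity $(\varphi_f)^{-1}=\varphi_{f^{-1}}$ where $f^{-1}(v):=f(v)^{-1}$, which gives the bijectivity of $\varphi_f$ as a morphism; (ii)~the equality $\pi\circ\varphi_f=\pi$, which is immediate from $f(\pi(g))\in H$; and (iii)~the identity $\varphi_f(X)=X'$, which I computed above from $f|_{\pi(X)}=\mu$. With the retraction $r$ available, all three verifications are a one-line group computation, so the entire weight of the proposition is carried by the extension step.
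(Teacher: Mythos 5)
Your argument is correct and is essentially the paper's own proof: the paper likewise forms the displacement map $v\mapsto s(v)^{-1}s'(v)$ into $H$, extends it to $f\colon G/H\to H$ via a retraction $G/H\to\pi(X)\cong\AA$ (which exists because $G/H$ is quasi-affine and $\pi(X)$ is a closed copy of $\AA$), and applies the shear $\varphi_f(g)=g\,f(\pi(g))$. No further comment is needed.
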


\section{Embeddings of $\AA$ with unipotent image}\label{sec.embedwithunipotentimage}

The following result says that two embeddings $f_1$ and $f_2$ of $\AA$ into
an algebraic group $G$ are the same up to an automorphism
of $G$,
provided that $f_1(\AA)$ and $f_2(\AA)$ are unipotent subgroups of $G$.

\begin{proposition}
	\label{prop.firstExample}
	Let $G$ be any algebraic group and let $U$, $V$ be unipotent
	one-dimensional subgroups. For any isomorphism of varieties
	$\sigmaÊ\colon U \to V$, there exists an algebraic
	automorphism $\varphi$ of $G$ such that $\varphi |_U = \sigma$.
\end{proposition}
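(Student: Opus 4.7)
The plan is to choose group isomorphisms $u \colon \CC \to U$ and $v \colon \CC \to V$ from the additive group, so that any isomorphism of varieties $\sigma \colon U \to V$ takes the explicit form $\sigma(u(t)) = v(at+b)$ for some $a \in \CC^\ast$ and $b \in \CC$. The task becomes to realize this affine map as the restriction of a variety automorphism of $G$.

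First I would peel off the translation part. Left translation $L_{v(b)} \colon G \to G$, $g \mapsto v(b) g$, is a variety automorphism of $G$ and satisfies $L_{v(b)}(v(at)) = v(at+b)$. Hence it suffices to extend the reduced map $\sigma_0 \colon u(t) \mapsto v(at)$, since post-composing an extension $\varphi_0$ by $L_{v(b)}$ then gives an extension of $\sigma$. After this reduction $\sigma_0$ sends identity to identity, and the curve
\[
	C := \{ u(t)^{-1} v(at) : t \in \CC \} \subseteq G
\]
passes through $e$.

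Next, I would split the extension of $\sigma_0$ into two subtasks: (i) produce a variety automorphism $\alpha$ of $G$ with $\alpha(U) = V$, and (ii) show that any variety automorphism of a one-dimensional unipotent subgroup extends to a variety automorphism of $G$. Once (i) and (ii) are established, composing $\alpha$ with the extension of the automorphism $\sigma_0 \circ (\alpha|_U)^{-1}$ of $V$ produces $\varphi_0$. For (i), if $U$ and $V$ are $G$-conjugate an inner automorphism works; more generally one would invoke the shearing tool (Proposition~\ref{prop.ConstructionOfAuto}) with a closed subgroup $H$ containing $C$ and satisfying $U \cap H = \{e\} = V \cap H$, realizing $U$ and $V$ as two sections of the principal $H$-bundle $\pi^{-1}(\pi(U)) \to \pi(U)$ aligned via $\sigma_0$. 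For (ii), translations $u(t) \mapsto u(t+c)$ are implemented by $L_{u(c)}$, so (ii) reduces to extending a scaling $u(t) \mapsto u(at)$ with $a \neq 1$.

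The hard part will be the scaling step in (ii). In the reductive case my plan is to apply Jacobson--Morozov to embed the tangent direction $X$ of $U$ at $e$ into an $\Lie{sl}_2$-triple in $\Lie{g}$, obtain a homomorphism $\SL_2 \to G$ (passing to a suitable cover of $G$ if necessary), and conjugate by the diagonal torus, which scales $U$ by squares and hence exhausts all of $\CC^\ast$. In the unipotent case $\exp \colon \Lie{g} \to G$ is an isomorphism of varieties, so the linear automorphism of $\Lie{g}$ acting as multiplication by $a$ on $\CC X$ and as the identity on a chosen vector-space complement transports to a variety automorphism of $G$ that scales $U$ as required. The general case is assembled from these two via the Levi decomposition $G = R_u(G) \rtimes L$, distinguishing whether $U$ is contained in $R_u(G)$ or projects isomorphically to the reductive quotient.
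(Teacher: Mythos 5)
Your reduction to steps (i) and (ii) puts the entire difficulty of the proposition into step (i), and the mechanism you propose there does not work. Producing a variety automorphism of $G$ carrying $U$ onto $V$ is essentially the whole content of the statement, and there is in general no closed subgroup $H$ containing the curve $C=\{u(t)^{-1}v(at)\}$ with $U\cap H=V\cap H=\{e\}$. Already for $G=\SL_2$ with $U$ the upper and $V$ the lower triangular unipotent subgroup, $C$ consists of the matrices $\bigl(\begin{smallmatrix}1-at^2 & -t\\ at & 1\end{smallmatrix}\bigr)$; a short computation shows these have no common eigenvector and are not all unipotent, so the only closed subgroup of $\SL_2$ containing $C$ is $\SL_2$ itself. (In this example $U$ and $V$ are conjugate, so your inner-automorphism fallback applies, but it shows that the ``more general'' recipe you rely on for non-conjugate pairs --- which do occur, e.g.\ regular versus subregular one-parameter subgroups in $\SL_3$ --- is not available.) The paper's proof avoids looking for such an $H$ altogether: after disposing of the case $\dim G^u=1$ and reducing to $U\neq V$, it shears \emph{both} $U$ and $V$ onto the common curve $\{\sigma(u)\cdot u\}=\{v\cdot\sigma^{-1}(v)\}\subseteq VU$, using the quotient $G\to V\backslash G$ with left multiplication for one shear and $G\to G/U$ with right multiplication for the other; the composite $\varphi_f^{-1}\circ\psi_d$ then restricts to $\sigma$ on $U$. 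No subgroup containing $C$ is needed, and the affine form of $\sigma$ plays no role.

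There is a second, smaller gap in the assembly of your scaling step (ii). The reductive case (Jacobson--Morozov and conjugation by the torus of the associated $\SL_2$) and the unipotent case (a linear scaling transported by $\exp$) are both fine, but a one-dimensional unipotent subgroup of $G=R_u(G)\rtimes L$ need not lie in $R_u(G)$ nor be conjugate into a Levi factor: already in $\CC^2\rtimes\SL_2$ there are one-parameter unipotent subgroups given by nontrivial cocycles over a root subgroup of $L$. For such $U$ neither of your two constructions produces the scaling automorphism, so ``distinguishing whether $U$ is contained in $R_u(G)$ or projects isomorphically to the reductive quotient'' does not close the argument in the second alternative.
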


\begin{proof}
	If $G^u$ is one-dimensional, then $G^u = R_u(G)$ and
 	$G$ is isomorphic to $G^u \times G/G^u$
	as a variety; see Remark~\ref{rem.Triviality_of_affine_bundles}.
	In particular, $U = V = G^u$ and every
	automorphism of $U$ extends to $G$. Thus, we can assume that
	$G^u$ is at least two-dimensional and hence we can assume that
	$V \neq U$. This implies $V \cap U = \{ e \}$ and therefore multiplication
	$V \times U \to V U \subseteq G$ is an embedding.
	Hence, the quotient map $\pi \colon G \to G/U$ restricts to
	an embedding on $V$. Since $G / U$ is quasi-affine, the morphism
	\[
		\xymatrix{
			\pi(V) \ar[rr]^-{(\pi |_{V})^{-1}} && V
			\ar[r]^-{\sigma^{-1}} & U
		}
	\]
	extends to a morphism $f \colon G /U \to U$.  Hence
	the automorphism $\varphi_f$ of $G$ (see Section~\ref{sec.Auto})
	satisfies $\varphi_f(v) = v \cdot \sigma^{-1}(v)$
	for all $v \in V$. Using the quotient $\rho \colon G \to V \backslash G$
	one can similarly construct an automorphism $\psi_d$ of $G$ such that
	$\psi_d(u) = \sigma(u) \cdot u$ for all $u \in U$. It follows that
	$\varphi = \varphi_f^{-1} \circ \psi_d$ restricts to $\sigma$ on $U$.
\end{proof}

\section{Generic projection results}\label{sec.genproj}

The aim of this section is to prove results, which enable
us to quotient by unipotent subgroups such that the projection
restricts to a closed embedding or to a birational map on a given fixed curve.
These projection results will be applied in
Sections~\ref{sec.ReductionSemisimple} and Section~\ref{sec.ReductionSimple}
to reduced Theorem~\ref{thm:mainthm} to semisimple groups and simple groups,
respectively.
In Section~\ref{sec.Simple} we use these results in the heart of the proof of
Theorem~\ref{thm:mainthm}; namely for the case of embeddings into simple groups.

Let $V$ be a variety.
Throughout this 
paper we say that a property
is satisfied for generic $v \in V$ if there exists a dense open subset $O$ in $V$
such that the property is satisfied for all $v$ in $O$.

\subsection{Quotients that restrict to closed embeddings on a fixed curve}
Our first result in this section deals with arbitrary algebraic groups and quotients by
one-dimensional unipotent subgroups.

\begin{lemma}[Communicated by Winkelmann]
	\label{lem.OneDimProj}
	Let $G$ be an algebraic group and let $X \subseteq G$ be a closed curve
	that has only one smooth point at infinity.
	If the set of unipotent elements $\UUU_G$
	has dimension at least four, then, for a generic one-dimensional
	unipotent subgroup $U \subseteq G$, the quotient $G \to G /U$ restricts to
	a closed embedding on $X$.
\end{lemma}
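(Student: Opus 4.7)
The plan is to parametrize the one-dimensional unipotent subgroups of $G$ by $\PP(\NNN_{\Lie{g}})$ via $[v] \mapsto U_v := \exp(\C v)$. Since $\exp \colon \NNN_{\Lie{g}} \to \UUU_G$ is an isomorphism of affine varieties, this parameter space has dimension $\dim \UUU_G - 1 \geq 3$. Let $\gamma \colon X \hookrightarrow G$ denote the given closed embedding and $p \in \bar X$ the unique point at infinity, and write $\pi_v := \pi_{U_v}$. I aim to show that the set $B \subseteq \PP(\NNN_{\Lie{g}})$ of ``bad'' $[v]$, those for which $\pi_v|_X$ is not a closed embedding, has dimension at most $2$; since $\dim \PP(\NNN_{\Lie{g}}) \geq 3$, this forces $B$ to be a proper subvariety and the generic $[v]$ to satisfy the conclusion.

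Decompose $B = B_{\mathrm{inj}} \cup B_{\mathrm{imm}} \cup B_{\mathrm{prop}}$ according to which defining property of a closed embedding fails: injectivity on closed points, injectivity on tangent spaces, or closedness of the image. For injectivity, $\pi_v|_X$ is non-injective iff some $x_1 \neq x_2 \in X$ satisfy $x_1^{-1} x_2 \in U_v \setminus \{e\}$, equivalently $[\log(x_1^{-1} x_2)] = [v]$. Hence $B_{\mathrm{inj}}$ is the image of the rational map
\[
	\sigma \colon (X \times X) \setminus \Delta_X \rat \PP(\NNN_{\Lie{g}}), \qquad (x_1, x_2) \mapsto [\log(x_1^{-1} x_2)],
\]
defined where $x_1^{-1} x_2 \in \UUU_G \setminus \{e\}$, so $\dim B_{\mathrm{inj}} \leq 2$. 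For immersivity, $d\pi_v$ fails on a tangent line at a smooth point $x \in X$ iff its left translate $d L_{x^{-1}}(T_x X)$ lies in $\C v$, so $B_{\mathrm{imm}}$ is the image of a rational map from the smooth locus $X^{\mathrm{sm}}$ to $\PP(\NNN_{\Lie{g}})$, plus finitely many contributions from singular points, giving $\dim B_{\mathrm{imm}} \leq 1$.

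The main obstacle is the bound on $B_{\mathrm{prop}}$. Since $p$ is a smooth point of $\bar X$, $\pi_v \circ \gamma$ extends uniquely to $\bar \pi_v \colon \bar X \to \overline{G/U_v}$ for any fixed projective compactification of $G/U_v$, and failure of properness corresponds to $\bar \pi_v(p) \in G/U_v$, i.e.\ the existence of $g \in G$ and a rational $u(t) \in U_v$ (for $t$ a local parameter at $p$) with $\gamma(t) u(t) \to g$ as $t \to 0$. Since $X$ is closed in $G$, $\gamma(t)$ diverges at $p$, so $u(t)$ must absorb the precise asymptotic direction along which $\gamma$ escapes $G$. I would analyze the incidence variety $I := \{(g, [v]) \in G \times \PP(\NNN_{\Lie{g}}) : \bar \pi_v(p) = g U_v\}$: its fibers over $[v]$ are contained in single $U_v$-cosets, and for each $g$ the corresponding $[v]$ is pinned down up to scaling by the asymptotic direction of $\log(\gamma(t)^{-1} g) \in \Lie{g}$ once that direction is nilpotent. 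This asymptotic constraint, driven by the single boundary point $p$ rather than a positive-dimensional locus, should force $\dim B_{\mathrm{prop}} \leq 2$ and is where I expect the main technical difficulty. Combining the three estimates gives $\dim B \leq 2 < \dim \PP(\NNN_{\Lie{g}})$, so a generic $U$ yields the desired closed embedding on $X$.
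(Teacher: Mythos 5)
Your treatment of injectivity and immersivity matches the paper's proof in substance: the paper forms the set $F$ of quotients $y^{-1}x$ with $x\neq y$ in $X$, takes the cone over $\exp^{-1}(F\cap\UUU_G)$ inside $\NNN_{\Lie{g}}$ (a constructible set of dimension at most three, hence of dimension at most two after projectivizing), and separately bounds the union of the translated tangent lines $Dl_{x^{-1}}(T_xX)$ intersected with $\NNN_{\Lie{g}}$ by dimension two; both loci are then missed by the generic point of the $(\dim\UUU_G-1)\geq 3$-dimensional family of one-dimensional unipotent subgroups. Up to that point your argument is sound (with the same mild imprecision as the paper at singular points of $X$, where $T_xX$ need not be a line).

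The genuine gap is exactly where you locate the ``main technical difficulty'': you never prove $\dim B_{\mathrm{prop}}\leq 2$. The incidence-variety sketch is not a proof --- you control neither the dimension of the set of limits $g$ nor the claim that the asymptotic direction of $\log(\gamma(t)^{-1}g)$ pins down $[v]$. The point you are missing is that no dimension count is needed for properness at all. Since $\bar X\setminus X$ is a single smooth point $p$, a regular function on $X$ that stays bounded near $p$ extends to a regular function on the projective curve $\bar X$ and is therefore constant; hence every non-constant regular function on $X$ has a pole at $p$. Consequently, any morphism from $X$ to a quasi-affine variety whose image is not a single point is proper with closed image (compose with a non-constant coordinate function of an ambient affine space). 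Applied to $\pi_v|_X$ with $G/U_v$ quasi-affine, this shows that $B_{\mathrm{prop}}$ is contained in the locus where $\pi_v|_X$ is constant, which already lies inside $B_{\mathrm{inj}}$. This is precisely the one-line observation the paper makes (``Since $G/U$ is quasi-affine, the quotient $G\to G/U$ maps $X$ properly onto its image, as long as the image is not a single point, since $X$ has only one smooth point at infinity''), and it is the reason the hypothesis on the point at infinity appears in the statement.
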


\begin{remark}
	Using the exponential map
	$\exp: \NNN_{\Lie g} \to \UUU_G$,
	we consider the whole of one-dimensional
	unipotent subgroups of $G$ as the image of $\NNN_{\Lie g} \setminus \{Ê0 \}$
	under the quotient
	$\Lie g \setminus \{ 0 \} \to \PP(\Lie g)$. Note that
	this image is closed in $\PP(\Lie g)$ and therefore we can speak
	of a ``generic one-dimensional unipotent subgroup".
\end{remark}

\begin{proof}
	As already mentioned, the exponential
	restricts to an isomorphism of affine varieties
	$\exp \colon \NNN_{\Lie{g}} \to \UUU_G$.
	We denote by $F$ the set of all elements in $G$ of the form $y^{-1} x$
	with $x, y \in X$ and $x \neq y$. Let
	\[
		F' = \exp( \cone( \exp^{-1}(F \cap \UUU_G)) \subseteq
		\UUU_G \, ;
	\]
	where $\cone(M)$ denotes the union of
	all lines in $\NNN_{\Lie{g}}$ that pass through the origin
	and intersect $M$, for any subset $M$ of $\NNN_{\Lie{g}}$.
	Let $U \subseteq G$ be a one-dimensional unipotent subgroup.
	Thus $G \to G / U$ maps $X$ injectively onto its image if and only if
	$U \cap F' = \{ e \}$. However, $F'$ is a constructible
	subset of $\UUU_G$ of dimension at most three.

	Let $S \subseteq \Lie{g}$ be the union
	of all lines $D l_{x^{-1}}(T_x X)$, $x \in X$, where
	$l_g \colon G \to G$ denotes left multiplication by $g \in G$.
	Let $U \subseteq G$ be a one-dimensional unipotent subgroup.
	Thus $G \to G/U$ maps $X$ immersively onto its image if and only if
	$\mathfrak{u} \cap S \cap \NNN_{\Lie{g}} = \{Ê0 \}$ where $\mathfrak{u}$
	denotes the Lie algebra of $U$.
	Clearly, $S \cap \NNN_{\Lie{g}}$ is a constructible subset of $\NNN_{\Lie{g}}$
	of dimension at most two.
	
	Since $G / U$ is quasi-affine, the quotient $G \to G / U$ maps $X$
	properly onto its image, as long as the image is not a single point,
	since $X$ has only one smooth point at infinity.

	In summary, we proved that the restriction of $G \to G/U$ to $X$
	is injective, immersive and proper for
	a generic one-dimensional unipotent subgroup $U$ in $G$.
\end{proof}

\begin{remark}
	\label{rem.OneDimProj} The proof of Lemma~\ref{lem.OneDimProj}
	shows that we
	can replace $\UUU_G$ by some closed subset $W$ of $\UUU_G$
	that is a union of unipotent subgroups and has dimension at least four
	in order to prove that for a generic one-dimensional unipotent subgroup
	$U$ in $W$ the quotient $G \to G/U$ restricts to a closed embedding on $X$.
\end{remark}

Our second result deals with simple algebraic groups and quotients by
arbitrary unipotent subgroups.

\begin{proposition}
	\label{prop.UnipotProj}
	Let $G$ be a simple algebraic group of rank at least two and let
	$U \subseteq G$ be a unipotent subgroup.
	If $X \subseteq G$ is a closed smooth curve with only
	one smooth point at infinity,
	then there exists an automorphism $\varphi$ of $G$
	such that for generic $g \in G$ the projection $G \to G / gUg^{-1}$
	restricts to a closed embedding on $\varphi(X)$.
\end{proposition}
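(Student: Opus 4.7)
My plan is to ensure separately the three conditions required for $\pi_g \colon G \to G / g U g^{-1}$ to restrict to a closed embedding on $\varphi(X)$: injectivity, immersiveness and properness. Properness is automatic as in the proof of Lemma~\ref{lem.OneDimProj}, because $gUg^{-1}$ is unipotent (so $G/gUg^{-1}$ is quasi-affine) and $\varphi(X) \cong \AA$ has one smooth point at infinity. Writing
\[
F_\varphi := \varphi(X)^{-1}\varphi(X) \setminus \{e\}, \qquad S_\varphi := \bigcup_{x \in \varphi(X)} D l_{x^{-1}}(T_x \varphi(X)) \subseteq \Lie{g},
\]
injectivity of $\pi_g|_{\varphi(X)}$ amounts to $F_\varphi \cap gUg^{-1} = \emptyset$, and immersiveness to $S_\varphi \cap \Ad(g)\mathfrak{u} = \{0\}$. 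Since $gUg^{-1} \subseteq \UUU_G$ and $\Ad(g)\mathfrak{u} \subseteq \NNN_{\Lie{g}}$, only the intersections $F_\varphi \cap \UUU_G$ and $S_\varphi \cap \NNN_{\Lie{g}}$ play a role.

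The central engine is the Steinberg dimension formula $\dim(C \cap U_{\max}) = \tfrac{1}{2}\dim C$ for unipotent conjugacy classes $C$ and a maximal unipotent $U_{\max}$. Consider the incidence
\[
I = \{ (g,z) \in G \times (F_\varphi \cap \UUU_G) \colon g^{-1} z g \in U \},
\]
whose projection to $G$ contains the bad locus for injectivity. Stratifying $F_\varphi \cap \UUU_G$ by conjugacy classes and using that the fiber of $I$ over $z \in C$ has dimension $\dim Z_G(z) + \dim(C \cap U) = \dim G - \dim C + \dim(C \cap U)$, one obtains
\[
\dim I = \dim G + \max_{C} \bigl( \dim(F_\varphi \cap C) + \dim(C \cap U) - \dim C \bigr),
\]
the max ranging over non-trivial unipotent classes meeting both $F_\varphi$ and $U$. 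Steinberg's bound $\dim(C \cap U) \le \dim C / 2$ reduces the desired $\dim I < \dim G$ to the inequality $\dim(F_\varphi \cap C) < \dim C / 2$ for every relevant $C$. The next step is to arrange $\varphi$ so that $F_\varphi$ is in \emph{general position} with respect to the unipotent stratification, i.e.~$\dim(F_\varphi \cap C) \le \max(0,\, 2 + \dim C - \dim \UUU_G)$. Under that hypothesis, the inequality $\max(0,\, 2 + \dim C - \dim \UUU_G) < \dim C / 2$ holds for every non-trivial unipotent class whenever $G$ is simple of rank at least two: in that range $\dim \UUU_G \ge 6$, every non-trivial unipotent class has dimension at least four, and $\dim C \le \dim \UUU_G$ combines with $\dim \UUU_G > 4$ to yield the bound. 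A completely analogous argument with $S_\varphi$ in place of $F_\varphi$ handles the immersiveness bad locus, and combining both gives the proposition.

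The \textbf{main obstacle} is constructing the automorphism $\varphi$ that puts $F_\varphi$ in general position. Left translation, right translation, conjugation and group automorphisms all fail to change $\dim(F_\varphi \cap \UUU_G)$: indeed $F_{gX} = F_X$, $F_{Xg} = g^{-1} F_X g$, and $\UUU_G$ is conjugation-invariant. The remedy is to use \emph{shear automorphisms} $\varphi_f$ of the type introduced in Section~\ref{sec.Auto}, associated to a quotient $\pi \colon G \to G/H$ on which $\pi(X)$ is non-constant. Varying the morphism $f \colon G/H \to H$ perturbs $F_{\varphi_f(X)}$ in a non-translation-like way, and the technical content is to verify that some such $f$ (necessarily existing when $G$ is simple of rank at least two, by a dimension count in the space of shears) achieves transversality of $F_{\varphi_f(X)}$ with $\UUU_G$. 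The rank $\ge 2$ hypothesis enters both there, in providing enough room for a good $\pi$ and $f$, and earlier in validating the Steinberg-based dimension count.
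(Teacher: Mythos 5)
Your overall architecture (treat injectivity, immersivity and properness separately; properness is free since $\varphi(X)\cong\AA$ has one place at infinity and $G/gUg^{-1}$ is quasi-affine; control the injectivity bad locus via an incidence variety and the bound $\dim(C\cap U)\le\tfrac12\dim C$ from \cite[Proposition~6.7]{Hu1995Conjugacy-classes-}) is the same as the paper's. But there is a genuine gap at exactly the point you yourself flag as the ``main obstacle'': you never construct an automorphism $\varphi$ for which $F_\varphi$ satisfies the stratified general-position estimate $\dim(F_\varphi\cap C)\le\max(0,\,2+\dim C-\dim\UUU_G)$ simultaneously for every unipotent class $C$. Establishing such a Bertini-type transversality statement for the family of surfaces $F_{\varphi_f(X)}$, as $f$ ranges over shears, would require proving that this family dominates the relevant parameter spaces; nothing in the proposal does this, and it is not a routine verification. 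Without it the inequality $\dim I<\dim G$ is unsupported and the argument does not close. (A secondary unproved assertion is that every non-trivial unipotent class has dimension at least four; the paper derives $\dim\Cl_G(v)\ge 4$ from the rank $\ge 2$ hypothesis via Lemma~\ref{lem.enoughUnipotentsInAParabolic} and evenness of orbit dimensions.)

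The paper avoids any stratified transversality by a much softer device. Since $G$ is simple it is a flexible variety, so by \cite[Theorem~4.14, Remark~4.16 and Theorem~0.1]{ArFlKa2013Flexible-varieties} one may choose $\varphi$ so that $e\in\varphi(X)$, $\varphi(X)$ contains non-unipotent elements, and $T_e\varphi(X)$ contains non-nilpotent vectors. Then the difference set $F_\varphi$ (an irreducible constructible set of dimension $\le 2$, being the image of $(X\times X)\setminus\Delta$) is not contained in $\UUU_G$, so $F_\varphi\cap\UUU_G$ has dimension $\le 1$; likewise the irreducible curve $S_\varphi$ of tangent directions in $\PP(\Lie{g})$ meets $\PP(\NNN_{\Lie{g}})$ in only finitely many points. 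After this reduction the dimension count needs only the single estimate $\codim_G\{g: v\in gUg^{-1}\}\ge 2$ for each unipotent $v\neq e$ (Lemma~\ref{lem.injective}), plus $\bigcap_g\Ad(g)\Lie{u}=\{0\}$ for the finitely many bad tangent directions (Lemma~\ref{lem.immersive}). Note also that your remark that translations and conjugations cannot change $\dim(F_\varphi\cap\UUU_G)$ is a red herring: the automorphisms actually needed are compositions of flows of $\C^+$-actions supplied by flexibility, and what they alter is not a generic-position dimension but the cruder property of whether $\varphi(X)$ and $T_e\varphi(X)$ are contained in $\UUU_G$ and $\NNN_{\Lie{g}}$ at all.
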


In order to prove this result, we have to show
that for generic $g \in G$ the projection $G \to G/ gUg^{-1}$
restricts to an injective and immersive map on $\varphi(X)$ for
a suitable automorphism $\varphi$.
If this is the case, then this restriction is automatically proper, since
$X$ has only one smooth point at infinity.

\begin{lemma}[Immersivity]	
	\label{lem.immersive}
	Let $G$ be a connected reductive algebraic group,
	$U \subseteq G$ a closed unipotent subgroup.
	If $X \subseteq G$ is a closed {irreducible}
	smooth curve such that $e \in X$
	and $T_e X$ contains non-nilpotent elements of the Lie algebra
	$\Lie{g}$, then for generic $g \in G$
	the projection $\pi_g \colon G \to G / g U g^{-1}$ restricts to an immersion
	on $X$.
\end{lemma}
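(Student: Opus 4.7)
The plan is to localize the non-immersivity locus to a finite union of proper closed subvarieties of $G$ and then exploit reductivity in the final step.

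First I would translate immersivity into a linear-algebraic condition on $\Lie{g}$. Since $\pi_g$ is $G$-equivariant for left translation with fiber $x \cdot (gUg^{-1})$ through $x$, its differential at $x$ has kernel $D l_x(\Ad(g)\Lie{u})$. Setting $L_x := D l_{x^{-1}}(T_xX) \subseteq \Lie{g}$, the restriction $\pi_g|_X$ is immersive at $x$ if and only if $L_x \cap \Ad(g)\Lie{u} = \{0\}$. Since $U$ is unipotent, $\Ad(g)\Lie{u} \subseteq \NNN_{\Lie{g}}$ for every $g$, so this condition is automatic whenever the line $L_x$ is not contained in $\NNN_{\Lie{g}}$.

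Next I would argue that the problematic locus $X_0 := \{\, x \in X : L_x \subseteq \NNN_{\Lie{g}} \,\}$ is finite. The assignment $x \mapsto [L_x] \in \PP(\Lie{g})$ is a morphism (well-defined thanks to the smoothness of $X$), and $X_0$ is the preimage of the closed subvariety $\PP(\NNN_{\Lie{g}})$; hence $X_0$ is closed in $X$. By hypothesis $e \notin X_0$, so $X_0$ is a proper closed subset of the irreducible one-dimensional curve $X$, and therefore finite.

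For each $x \in X_0$ I pick a non-zero $v_x \in L_x$, which is then a non-zero nilpotent element of $\Lie{g}$. The locus $K_{v_x} := \{\, g \in G : v_x \in \Ad(g)\Lie{u} \,\}$ is closed in $G$, and I would show $K_{v_x} \subsetneq G$: otherwise $\Ad(G)v_x \subseteq \Lie{u}$, so the linear span $V := \Span\, \Ad(G)v_x$ would be an $\Ad(G)$-invariant subspace of $\Lie{u}$, hence an ideal of $\Lie{g}$ consisting of nilpotent elements. Since $G$ is reductive, $\Lie{g}$ admits no non-zero nilpotent ideal (the center consists of semisimple elements and the semisimple part has no non-zero nilpotent ideals), contradicting $v_x \neq 0$. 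Therefore $\bigcup_{x \in X_0} K_{v_x}$ is a finite union of proper closed subvarieties of $G$, and any $g$ in its dense open complement renders $\pi_g|_X$ immersive.

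The main obstacle, and the step where reductivity is truly used, is ruling out $K_{v_x} = G$; without reductivity one could have $\Ad(G)v_x$ trapped inside a non-trivial nilpotent ideal of $\Lie{g}$ and the argument would collapse. The remaining ingredients, namely the differential-kernel computation and the dimension count on $X$, are routine.
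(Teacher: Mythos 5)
Your proof is correct and takes essentially the same route as the paper: compute the kernel of $D\pi_g$ via left translation, use the hypothesis on $T_eX$ together with irreducibility of $X$ to confine the potentially non-immersive points to a finite set, and then show that for each fixed non-zero nilpotent direction $v$ the locus $\{g : v \in \Ad(g)\Lie{u}\}$ is a proper closed subset of $G$. The only (minor) difference is in that last step: the paper argues at the group level, observing that $\bigcap_g gUg^{-1}$ is a connected normal unipotent subgroup of $G$ and hence trivial by reductivity, whereas you argue at the Lie algebra level via the absence of non-zero nilpotent ideals in $\Lie{g}$ --- the two arguments correspond under the exponential and both use reductivity in exactly the same place.
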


\begin{proof}
	Denote by $\Lie{u}$ the Lie algebra of $U$.
	The kernel of the differential of $\pi_g$ in $e \in G$
	is the sub Lie algebra $\Ad(g) \Lie{u}$ of $\Lie{g}$,
	where $\Ad(g)$ denotes the linear isomorphism of $\Lie{g}$
	induced by the differential in $e$ of the automorphism of $G$ that is
	given by $h \mapsto g h g^{-1}$.
	Consider the morphism
	\begin{equation}
		\label{eq.immersive}
		G \times (\Lie{u} \setminus \{ 0 \})
		\to \PP(\Lie{g}) \, , \quad (g, v) \mapsto [\textrm{Ad}(g)v] \, ,
	\end{equation}
	where $[w]$ denotes the line through $0 \neq w \in \Lie{g}$.
	Since $G$ is not unipotent,
	the set of non-nilpotent elements is a dense open subset
	of $\Lie{g}$
	which maps via the projection $\Lie{g} \setminus \{ 0 \} \to \PP(\Lie{g})$
	to a dense open subset $O$. Since $\textrm{Ad}(g)v$ is nilpotent for all
	$v \in \Lie{u}$, the open set $O$ lies in the complement
	of the image of the morphism in  \eqref{eq.immersive}. Let
	\[
		S = \bigcup_{x \in X}  \PP(T_{e} (x^{-1} X)) \subseteq \PP(\Lie{g}) \, ,
	\]
	which is a locally closed irreducible curve in $\PP(\Lie{g})$.
	Hence, $\pi_g$ is immersive for $g \in G$ if and only
	if $S \cap \PP(\Ad(g) \Lie{u})$ is empty.
	By assumption $S \cap O$ is non-empty and thus
	there exists a finite subset $F$ of $S$ such that
	$S \setminus F \subseteq O$, since $S$ is irreducible. Thus
	$(S \setminus F) \cap \PP(\Ad(g) \Lie{u})$ is empty for all $g \in G$.
	We claim that
	\begin{equation}
		\label{eq:capAd=0}
		\bigcap_{g \in G} \Ad(g) \Lie{u} = \{ 0 \} \, .
	\end{equation}
	Using the isomorphism $\exp \colon \NNN_g \to \UUU_G$, \eqref{eq:capAd=0}
	is equivalent to 
	the intersection
	\begin{equation}
		\label{eq.intersection}
		\bigcap_{g \in G} gUg^{-1}
	\end{equation}	
	being trivial. Let $v$ be in the intersection in \eqref{eq.intersection}
	and let $N$ be the smallest closed subgroup of $G$
	that contains all conjugates $g v g^{-1}$ of $v$.
	Clearly, $N \subseteq U$.
	By \cite[Proposition~7.5]{Hu1975Linear-algebraic-g}, $N$ is
	connected and normal in $G$.
	Since the unipotent radical of $G$ is trivial, $N$ is trivial.
	Thus, $v = e$, which proves our claim. As a consequence of
	\eqref{eq:capAd=0},
	the intersection $F \cap \PP( \Ad(g) \Lie{u})$ is empty
	for generic $g \in G$. This proves the lemma.
\end{proof}

\begin{lemma}[Injectivity]
	\label{lem.injective}
	Let $G$ be a simple algebraic group of rank $\geq 2$ and let
	$U \subseteq G$ be a unipotent subgroup. If $X \subseteq G$
	is a closed irreducible
	curve such that $e \in X$ and $X$ contains non-unipotent
	elements, then for generic $g \in G$, the projection
	$\pi_g \colon G \to G / g U g^{-1}$ restricts to an injection on $X$.
\end{lemma}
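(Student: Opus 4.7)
The plan follows the strategy of Lemma~\ref{lem.immersive}: I will show that the set of $g \in G$ for which $\pi_g|_X$ fails to be injective is not dense. Non-injectivity of $\pi_g|_X$ means that there exist distinct $x, y \in X$ with $y^{-1} x \in g U g^{-1}$, so I first consider the constructible set
\begin{equation*}
  F := \{\, y^{-1} x \mid x, y \in X, \; x \neq y \,\} \subseteq G,
\end{equation*}
which is the image of the irreducible variety $(X \times X) \setminus \Delta_X$ under $(x, y) \mapsto y^{-1} x$; thus $F$ is irreducible of dimension at most $2$. Since $e \in X$ and $X$ contains a non-unipotent element $x_0 \neq e$, we have $x_0 = e^{-1} \cdot x_0 \in F$, so $F \not\subseteq \UUU_G$, and hence $F^u := F \cap \UUU_G$ is a proper closed subset of $F$ of dimension at most $1$. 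Because every conjugate of $U$ is unipotent, the set of bad $g$ coincides with the first projection $\pi_1(Z)$ of the incidence variety
\begin{equation*}
  Z := \{\, (g, h) \in G \times F^u \mid g^{-1} h g \in U \,\}.
\end{equation*}

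Next I would bound $\dim Z$ via the second projection $\pi_2\colon Z \to F^u$. For $h \in F^u$, denote its $G$-conjugacy class by $C_h$. Since the morphism $G \to C_h$, $g \mapsto g^{-1} h g$, is surjective with fibers of dimension $\dim \Cent_G(h) = \dim G - \dim C_h$, the fiber of $\pi_2$ over $h$ is the preimage of $U \cap C_h$ and therefore has dimension
\begin{equation*}
  \dim(U \cap C_h) + \dim G - \dim C_h.
\end{equation*}
The decisive input is the Spaltenstein--Steinberg formula (see \cite{St1976On-the-desingulari} and \cite[\S 6.7]{Hu1995Conjugacy-classes-}): for a maximal unipotent subgroup $\widetilde U$ of a semisimple group and any non-trivial unipotent conjugacy class $C$, one has $\dim(\widetilde U \cap C) = \tfrac{1}{2} \dim C$. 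Since every unipotent subgroup of $G$ is contained in the unipotent radical of some Borel, we may pick such a $\widetilde U$ containing $U$, which gives $\dim(U \cap C_h) \leq \tfrac{1}{2}\dim C_h$.

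For $G$ simple of rank at least $2$, every non-trivial unipotent conjugacy class has dimension at least $4$ (the minimum being attained in types $A_2$ and $B_2 = C_2$). Thus each fiber of $\pi_2$ has dimension at most $\dim G - \tfrac12 \dim C_h \leq \dim G - 2$, and combined with $\dim F^u \leq 1$ this yields $\dim Z \leq \dim G - 1$. Hence $\pi_1(Z)$ is not dense in $G$, and for generic $g$ we have $F^u \cap g U g^{-1} = \emptyset$, so $\pi_g|_X$ is injective.

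The main obstacle is producing the strong fiber bound $\dim(U \cap C_h) \leq \tfrac12 \dim C_h$. The naive estimate $\dim(U \cap C_h) \leq \dim C_h - 1$, which follows from triviality of $R_u(G)$ as in the proof of Lemma~\ref{lem.immersive}, only gives $\dim Z \leq \dim G$ and is too weak to conclude. This is precisely the role of the ``elegant formula'' highlighted in the introduction, and it is where the rank-$\geq 2$ hypothesis is used: in rank $1$ (i.e.~$\SL_2$ and $\PSL_2$) a non-trivial unipotent conjugacy class can have dimension only $2$, and the argument breaks.
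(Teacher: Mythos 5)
Your argument is correct and follows essentially the same route as the paper: reduce to the unipotent part of the difference set $\{\,y^{-1}x \mid x\neq y \in X\,\}$, which has dimension at most one because that set is irreducible and contains a non-unipotent element; then bound the incidence variety by computing fibers of the orbit map $g\mapsto g^{-1}hg$ and invoking the half-dimension estimate $\dim(U\cap \Cl_G(h))\leq \tfrac12\dim \Cl_G(h)$ of \cite[Proposition~6.7]{Hu1995Conjugacy-classes-}. The single point of divergence is the input $\dim \Cl_G(h)\geq 4$ for non-trivial unipotent classes: you cite the classification-based fact that the minimal nilpotent orbit of a simple group of rank at least two has dimension at least four, whereas the paper derives the same bound without appealing to orbit classifications, by combining the evenness of $\dim \Cl_G(h)$ (again from \cite[Proposition~6.7]{Hu1995Conjugacy-classes-}) with the observation that $\Cent_G(h)$ is a \emph{proper} subgroup of a proper parabolic subgroup $P$ satisfying $\codim_G P\geq 2$, the latter coming from Lemma~\ref{lem.enoughUnipotentsInAParabolic}. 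Both justifications are valid; the paper's is self-contained modulo its appendix, while yours outsources this one sub-step to the standard literature on nilpotent orbits.
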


\begin{proof}
	The strategy of the proof resembles the strategy of the proof
	of Lemma~\ref{lem.immersive}.
	Consider the morphism
	\[
		G \times U \to G \, , \quad (g, u) \mapsto gug^{-1} \, .
	\]
	Since $G$ is not unipotent, $G \setminus \UUU_G$ is dense and open
	in $G$,
	and it is contained in the complement of the image of the above morphism.
	Let us denote this open subset by $O$. Let
	\[
		S = \{ \, x^{-1} y \in G \ | \ x \neq y \in X  \, \} \, .
	\]
	Hence, $\pi_g$ is injective if and only if $S \cap gUg^{-1}$ is empty.
	By assumption $S \cap O$ is non-empty and thus there exists
	a curve (or finite set) $C \subseteq S$ consisting of unipotent elements
	such that $S \setminus C \subseteq O$ since $S$ is irreducible.
	Hence, $(S \setminus C) \cap gUg^{-1}$ is empty
	for all $g \in G$. Therefore it is enough to show that
	$C \cap gUg^{-1}$ is empty for generic $g \in G$.
	This can be achieved by showing that for all $e \neq v \in \UUU_G$
	the set
	\[
		F_v = \{ \, g \in G \ | \ v \in gUg^{-1} \,Ê\}
	\]
	has codimension $\geq 2$ in $G$. Indeed, if $\codim_G(F_v) \geq 2$
	for all $v \neq e$, then the dimension of
	\[
		F = \{ \, (v, g) \in C \times G \ | \ g \in F_v \, \}
	\]
	is less than the dimension of $G$. Hence, $F$ maps
	to a subset of codimension $\geq 1$ in $G$ via the natural projection
	$C \times G \to G$, which then implies that $C \cap gUg^{-1}$
	is empty for generic $g \in G$.
	
	So let us prove that $\codim_G F_v \geq 2$. Denote by
	$\Cl_G(v)$ the conjugacy class of $v$ in $G$.
	By using the orbit map
	$G \to \Cl_G(v)$, $g \mapsto g^{-1}v g$
	one can see that $\codim_G F_v$ is the same as the
	codimension of $U \cap \Cl_G(v)$ in $\Cl_G(v)$.
	Since $G$ is semisimple,
	by~\cite[Proposition~6.7]{Hu1995Conjugacy-classes-} we have
	\[
		\dim U \cap \Cl_G(v) \leq \frac{1}{2} \dim \Cl_G(v) \, .
	\]
	Hence, it remains to show that $\Cl_G(v)$
	has dimension $\geq 3$, since
	the dimension of $\Cl_G(v)$ is even by
	\cite[Proposition~6.7]{Hu1995Conjugacy-classes-}.
	This is in fact equivalent to the statement that
	the centralizer $\Cent_G(v)$
	having codimension $\geq 3$ in $G$. The latter is true
	by the following argument. The unipotent radical $R_u(\Cent_G(v))$
	is not trivial since the one-dimensional unipotent group
	which contains $v \neq e$ is normal in $\Cent_G(v)$.
	Clearly, $\Cent_G(v)$ lies inside
	the normalizer $N_G(R_u(\Cent_G(v))$. However, this normalizer is contained
	in some parabolic subgroup $P$ that itself is the normalizer of some
	non-trivial unipotent subgroup of $G$;
	see~\cite[Corollary 30.3A]{Hu1975Linear-algebraic-g}.
	Since $G$ is reductive,
	this implies that $P$ is a proper subgroup of $G$.
	Since $G / \Cent_G(v) \to G$, $g \mapsto g^{-1}v g$
	is injective, $G$ is an affine variety, and $G/P$ is
	projective and of positive dimension,
	it follows that $\Cent_G(v)$ must be a proper
	subgroup of $P$. Since $P$ is connected, we have
	$\dim \Cent_G(v) < \dim P$. Since $G$ is simple
	and since the rank of $G$ is at least two, it follows from
	Lemma~\ref{lem.enoughUnipotentsInAParabolic} that
	$\dim R_u(P^-) \geq 2$. Here $P^-$ is the opposite
	parabolic subgroup to $P$ with respect to some maximal torus
	that is contained in some Borel subgroup which in turn is contained in $P$;
	see Appendix~\ref{sec.OppositeParabolicSubgroup}. This implies
	that the codimension of $P$ in $G$ is at least $2$ by
	Lemma~\ref{lem.dimG_is_dimR_uPminus_plus_dimP}. This in turn implies
	that $\Cent_G(v)$ has codimension $\geq 3$ in $G$, which proves the lemma.
\end{proof}

\begin{proof}[Proof of Proposition~\ref{prop.UnipotProj}]
	Since $G$ is simple, it is a so called flexible variety; see
	\cite[\S0]{ArFlKa2013Flexible-varieties}. Hence,
	there exists an automorphism
	$\varphi$ of $G$ such that $\varphi(X)$
	contains non-unipotent elements, $e \in \varphi(X)$ and the tangent space
	$T_e X$ contains non-nilpotent elements of the Lie algebra $\Lie{g}$;
	see \cite[Theorem 4.14, Remark 4.16 and Theorem~0.1]
	{ArFlKa2013Flexible-varieties}.
	By Lemma~\ref{lem.immersive} and Lemma~\ref{lem.injective}, for
	generic $g \in G$ the projection $\pi_g \colon G \to G / gUg^{-1}$
	restricted to $\varphi(X)$ is immersive and injective.
	As already mentioned, if this is the case, then $\pi_g |_{\varphi(X)}$
	is proper. This finishes the proof.
\end{proof}

\subsection{Quotients that restrict
to birational maps on a fixed curve}
Let us introduce the following notation. If $G$ is an algebraic
group, then for any $u \in \UUU_G \setminus \{ e \}$ we denote by ${\C^+}(u)$ the
one-dimensional unipotent subgroup of $G$ that contains $u$.
Roughly speaking the next lemma says: Under certain assumptions,
a curve $C$ in an affine homogeneous $G$-variety $Y$
projects birationally onto its image if we quotient $Y$ by ${\C^+}(u)$
where $u$ belongs to a dense subset of $\UUU_G$.

\begin{lemma}
	\label{lem.gen-projection-birational}
	Let $Y$ be an affine homogeneous $G$-variety
	where $G$ is a connected algebraic group acting from the right.
	We assume that generic elements in $\UUU_G$
	act without fixed point on $Y$. Moreover, we assume that
	for all $y$ in $Y$, every fiber of
	the morphism
	\[
		\rho_y \colon \UUU_G \to Y \, , \quad u \mapsto yu
	\]
	has codimension at least three in $\UUU_G$.
	If $C \subseteq Y$ is a closed curve,
	then there exists a dense subset in $\UUU_G$
	consisting of elements $u$ such that ${\C^+}(u)$
	acts without fixed point on $Y$ and the algebraic quotient
	$S_u \to S_u \aquot {\C^+}(u)$ restricts to a birational
	morphism on $C$, where $S_u$ denotes the smallest closed
	affine surface in $Y$ that contains all ${\C^+}(u)$-orbits passing
	through $C$.
\end{lemma}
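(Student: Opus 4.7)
The plan is to reduce the birationality statement to a dimension count on an incidence variety of pairs of points in $C$ that lie on a common one-dimensional unipotent orbit. Let $\mathcal{L}$ denote the parameter variety of one-dimensional unipotent subgroups of $G$: via $\exp$ this is the image of $\NNN_{\Lie{g}} \setminus \{0\}$ in $\PP(\Lie{g})$, a closed subvariety of dimension $\dim \UUU_G - 1$. The map $q \colon \UUU_G \setminus \{e\} \to \mathcal{L}$, $u \mapsto \C^+(u)$, has one-dimensional fibers, so it suffices to exhibit a dense open subset of $\mathcal{L}$ whose preimage under $q$ consists of ``good'' $u$.

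First I would discharge two preliminary conditions. By hypothesis the $u \in \UUU_G$ acting without fixed point on $Y$ form a dense subset, and since the fixed-point locus of a non-identity unipotent element coincides with that of the one-parameter subgroup it generates, the same holds for $\C^+(u)$. To guarantee that $S_u$ is genuinely a surface I would exclude those $u$ for which $C$ is itself a $\C^+(u)$-orbit; if this were to hold on a dense subset of $\UUU_G$, then for any fixed $c \in C$ the closed subvariety $\rho_c^{-1}(C)$ would contain this dense set and hence equal all of $\UUU_G$, yielding $c \cdot \UUU_G \subseteq C$ and contradicting $\dim c \cdot \UUU_G \geq 3$, which follows from the codimension hypothesis on the fibers of $\rho_c$.

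For the main step I would introduce the incidence variety
\[
J \;=\; \{(c, c', L) \in C \times C \times \mathcal{L} : c \neq c',\ c' \in c \cdot L\} .
\]
For $c \neq c'$ the fiber of $J \to C \times C$ over $(c,c')$ equals the image under $q$ of $\rho_c^{-1}(c') \subseteq \UUU_G \setminus \{e\}$, which by hypothesis has dimension at most $\dim \UUU_G - 3$; combined with $\dim(C \times C) \leq 2$ this yields $\dim J \leq \dim \UUU_G - 1 = \dim \mathcal{L}$. Projecting $J$ to $\mathcal{L}$, the generic fiber is therefore either empty (if the projection is not dominant) or zero-dimensional (if it is), and in either case finite. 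Consequently, for $L$ in a dense open subset of $\mathcal{L}$ the first projection $J_L \to C$ has finite image, which is equivalent to $(c \cdot L) \cap C = \{c\}$ for generic $c \in C$. Using freeness of the $\C^+(u)$-action on $S_u$, this exactly says that $C \to S_u \aquot \C^+(u)$ is birational, and pulling back via $q$ produces the required dense subset of $u \in \UUU_G$. The only subtle point is the clean parametrization of one-dimensional unipotent subgroups by $\mathcal{L}$ and the identification of the fiber of $J \to C \times C$ with $q(\rho_c^{-1}(c'))$; once this is in place the codimension hypothesis immediately produces the required bound on $\dim J$.
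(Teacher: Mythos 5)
Your argument is correct, and it reorganizes the key step differently from the paper's proof. Both arguments exploit the hypothesis in the same way: the codimension-$3$ bound on the fibers of $\rho_c$ controls the locus of one-dimensional unipotent subgroups whose orbit through a point of $C$ re-meets $C$. But the paper fixes a point $c_0\in C$, forms the cone $N_{c_0}=\exp(\cone(\exp^{-1}(\rho_{c_0}^{-1}(C))))$ of dimension at most $\dim\UUU_G-1$, and then intersects the complements $\UUU_G\setminus N_{c_0}$ over a countably infinite subset $C_0\subseteq C$, invoking the uncountability of $\CC$ to keep that intersection dense; birationality then follows because infinitely many fibers of $C\to S_u\aquot\C^+(u)$ are singletons. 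You instead assemble the incidence variety $J\subseteq C\times C\times\mathcal{L}$ and project to $\mathcal{L}$: the bound $\dim J\le 2+(\dim\UUU_G-3)=\dim\mathcal{L}$ gives generic finiteness of the fibers over $\mathcal{L}$, hence for generic $L$ all but finitely many $c\in C$ satisfy $cL\cap C=\{c\}$. This buys two things: you avoid any reliance on the uncountability of the ground field, and you obtain the slightly stronger conclusion that the good points of $C$ are cofinite rather than merely forming a countably infinite set (either suffices for birationality, via the same final observation, common to both proofs, that the generic fiber of $S_u\to S_u\aquot\C^+(u)$ is a single orbit). Your preliminary reductions are also fine --- the transfer of the fixed-point condition from $u$ to $\C^+(u)$ holds because the Zariski closure of $\langle u\rangle$ is $\C^+(u)$ for unipotent $u\neq e$ in characteristic zero --- and the exclusion of the case where $C$ is itself an orbit is in fact already forced by the finiteness of $J_L$.
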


\begin{remark}
	The algebraic quotient $S_u \aquot {\C^+}(u)$
	is the spectrum of the ring of functions on $S_u$ that are invariant under
	the action of ${\C^+}(u)$. In fact, $S_u \aquot {\C^+}(u)$
	is an irreducible affine curve,
	see \cite[Theorem~11.7]{Ma1986Commutative-ring-t} and
	\cite[Corollary 1.2, Theorem~3.2]{OnYo1982On-Noetherian-subr}.
\end{remark}

\begin{proof}[Proof of Lemma~\ref{lem.gen-projection-birational}]
	Let $c_0 \in C$ and let $K_{c_0}$ be the union of the orbits
	$c_0 {\C^+}(u)$, $u \in \UUU_G \setminus \{e\}$ where
	$c_0 {\C^+}(u)$ is either equal to $\{ c_0 \}$ or it
	contains points of $C$ different from $c_0$. In other words,
	\[
		K_{c_0} =
		\bigcup_{e \neq u \in \UUU_G \, \textrm{such that} \, c_0 u \in C}
		c_0 {\C^+}(u)  \, .
	\]
	With the aid of the exponential map
	$\exp \colon \NNN_{\Lie{g}} \to \UUU_G$ we define
	\[
		N_{c_0} =
		\bigcup_{e \neq u \in \rho_{c_0}^{-1}(C)} {\C^+}(u)
		= \exp(\textrm{cone}(\exp^{-1}(\rho_{c_0}^{-1}(C)))) \subseteq \UUU_G \, .
	\]
	One can see that $N_{c_0} = \rho_{c_0}^{-1}(K_{c_0})$. In particular, we have
	for $u \in \UUU_G \setminus N_{c_0}$ that
	$c_0 {\C^+}(u)$ intersects $C$ only in the point $c_0$.
	Since all the fibers of $\rho_{c_0}$ have codimension at least three in
	$\UUU_G$ and since $\dim C = 1$, it follows that
	$\dim \rho_{c_0}^{-1}(C) \leq \dim \UUU_G - 2$.
	By the construction of $N_{c_0}$ we get now
	\[
		\dim N_{c_0} \leq \dim \UUU_G - 1 \, .
	\]
	
	Take a countably infinite subset $C_0 \subseteq C$. Since our ground field
	is uncountable, the intersection
	$\bigcap_{c_0 \in C_0} \UUU_G \setminus N_{c_0}$ is
	dense in $\UUU_G$.
	Let $u \in \UUU_G$ be an element that acts without fixed point on
	$Y$ and such that
	$u \not\in \bigcup_{c_0 \in C_0} N_{c_0}$.
	Since a fiber of $S_u \to S_u \aquot {\C^+}(u)$ over a generic point
	of $S_u \aquot {\C^+}(u)$ is a ${\C^+}(u)$-orbit,
	it follows that {infinitely many fibers of $C \to S_u \to S_u \aquot \C^+(u)$
	consist only of one point.} Thus,
	$C$ is mapped birationally onto the algebraic quotient.
\end{proof}

\begin{remark}
	\label{rem.gen-projection-birational}
	The proof of the Lemma~\ref{lem.gen-projection-birational}
	shows the following: If there exist infinitely many
	$c_0$ in $C$ such that $\rho_{c_0}^{-1}(C) \leq \dim \UUU_G - 2$,
	then the statement of the lemma holds. In particular, the
	statement of the lemma holds, if there are infinitely many
	$c_0 \in C$ such that all fibers of
	$\rho_{c_0} \colon \UUU_G \to Y$ have codimension at least
	two in $\UUU_G$ and $c_0 \UUU_G \cap C$ is finite.
\end{remark}


\begin{corollary}
	\label{cor.Birational_projection}
	Let $G$ be a connected algebraic group such that
	$\dim G \geq 3$, $\dim \UUU_G \geq 2$ and $G = G^u$. If $C \subseteq G$
	is a closed irreducible curve, then there exists an automorphism
	$\varphi$ of $G$ and a dense subset of $\UUU_G$ consisting
	of elements $u$
	such that $G \to G / {\C^+}(u)$ maps $\varphi(C)$ birationally
	onto its image.
\end{corollary}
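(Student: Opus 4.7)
The plan is to apply Lemma~\ref{lem.gen-projection-birational} and its Remark~\ref{rem.gen-projection-birational} to $Y = G$ with $G$ acting on itself by right multiplication. The setup is easy to verify: the action is free, so every non-identity element of $\UUU_G$ acts without fixed points, and every fiber of $\rho_y \colon \UUU_G \to G$, $u \mapsto yu$, consists of a single point and therefore has codimension $\dim \UUU_G$ in $\UUU_G$. Hence when $\dim \UUU_G \geq 3$, Lemma~\ref{lem.gen-projection-birational} applies directly with $\varphi = \id_G$.

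In the remaining case $\dim \UUU_G = 2$ one must fall back on the second part of Remark~\ref{rem.gen-projection-birational}, whose extra hypothesis is that $c_0 \UUU_G \cap \varphi(C)$ be finite for infinitely many $c_0 \in \varphi(C)$. Since $\varphi(C)$ is an irreducible curve, each such intersection is either finite or all of $\varphi(C)$, and the locus of $c_0 \in \varphi(C)$ where the latter holds is closed (it is the intersection over $c \in \varphi(C)$ of the closed sets $\{\,c_0 \mid c_0^{-1} c \in \UUU_G\,\}$, using closedness of $\UUU_G$). Thus either this closed locus is proper in the irreducible curve $\varphi(C)$ and the Remark applies, or it equals all of $\varphi(C)$, which is equivalent to $\varphi(C)^{-1} \varphi(C) \subseteq \UUU_G$. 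It therefore suffices to find an automorphism $\varphi$ of the variety $G$ such that $\varphi(C)$ contains two points $q_1, q_2$ with $q_1^{-1} q_2 \notin \UUU_G$.

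For this I invoke flexibility: the hypothesis $G = G^u$ forces every character $G \to \CC^\ast$ to be trivial (it kills the unipotent generators of $G$), so by~\cite[Theorem~0.1]{ArFlKa2013Flexible-varieties}, $G$ is a flexible affine variety and $\SAut(G)$ acts $2$-transitively on the smooth variety $G$. Picking any two distinct points $p_1, p_2 \in C$ together with $q_1 = e$ and any $q_2 \in G \setminus \UUU_G$ (non-empty since $\dim G > \dim \UUU_G$), $2$-transitivity produces $\varphi \in \SAut(G) \subseteq \Aut(G)$ with $\varphi(p_i) = q_i$, and then $q_1^{-1} q_2 = q_2 \notin \UUU_G$ as required. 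The main obstacle is precisely this last step: left and right translations of $G$ preserve the degenerate condition $C^{-1} C \subseteq \UUU_G$ (the latter since $\UUU_G$ is conjugation-invariant), so merely translating $C$ does not work and one has to exploit genuinely non-translational automorphisms of $G$ coming from its flexibility.
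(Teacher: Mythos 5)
Your proposal is correct and takes essentially the same approach as the paper: both apply Lemma~\ref{lem.gen-projection-birational} and Remark~\ref{rem.gen-projection-birational} to $Y=G$ with the free right-multiplication action, and both use flexibility of $G$ (coming from $G=G^u$) to exclude the degenerate configuration $\varphi(C)^{-1}\varphi(C)\subseteq\UUU_G$. The paper realizes the flexibility step by an automorphism fixing a point $c_0\in C$ and making $\varphi(C)\cap c_0\,\UUU_G$ finite, which amounts to the same $2$-transitivity argument you give; your observation that $\varphi=\id$ already suffices when $\dim\UUU_G\geq 3$ is a harmless refinement.
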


\begin{proof}
	If $G$ is a unipotent group, the statement is clear, since $\dim G \geq 3$.
	Thus we can assume that $\UUU_G$ is a proper
	subset of $G$.
	Since $G = G^u$, the variety $G$ is flexible.
	Fix some point $c_0$ in $C$. By
	\cite[Theorem~0.1]{ArFlKa2013Flexible-varieties}
	there exists an automorphism $\varphi$
	of $G$ that fixes $c_0$ and the image $\varphi(C)$ intersects $c_0 \UUU_G$
	only in finitely many points. Thus we can assume that $c_0 \UUU_G \cap C$
	is finite. The fiber over $c \in C$ of the morphism
	\begin{equation}
		\label{eq.morphism}
		\{ \, (c, u) \in C \times \UUU_G \ | \ cu \in C \, \} \to C \, , \quad
		(c , u) \mapsto c
	\end{equation}
	is isomorphic to $c \UUU_G \cap C$.
	Since $C$ is irreducible, the subset of $C$
	given by
	\[
		C' := \{ \, c \in C \ | \ C \subseteq c \UUU_G \, \}
	\]
	consists of exactly those points for which the fiber of
	\eqref{eq.morphism} is not finite.
	Note that $C'$ is closed in $C$.
	Since $c_0 \UUU_G \cap C$ is finite,
	$C'$ is a proper subset of $C$. Since $C$ is irreducible, it follows now that
	the generic fiber
	of \eqref{eq.morphism} is finite, {i.e.~$c \UUU_G \cap C$ is finite for
	generic $c$ in $C$}.
	Since $\dim \UUU_G \geq 2$, it follows
	that for all $c \in C$
	the fibers of the map $\rho_c \colon \UUU_G \to G$, $\rho_c(u) = cu$
	have codimension at least two in $\UUU_G$.
	The corollary follows from
	Remark~\ref{rem.gen-projection-birational} applied
	to the homogeneous $G$-variety $Y = G$.
\end{proof}

\section{Reduction to semisimple groups}
\label{sec.ReductionSemisimple}

In this section we reduce the proof of Theorem~\ref{thm:mainthm}
to semisimple groups.

\begin{lemma}
	\label{lem.reduction_to_characterless}
	Let $G$ be a connected algebraic group with $G = G^u$
	and let $X$ be an affine variety that admits no non-constant invertible
	function $X \to {\C^*}$.
	{Moreover, let $n$ be a non-negative integer.}
	Then,
	all closed embeddings of $X$ into $G \times (\C^*)^n$ are
	equivalent if and only if all closed embeddings of $X$
	into $G$ are equivalent.
\end{lemma}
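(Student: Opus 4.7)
The plan is to exploit the rigidity of morphisms into $\C^\ast$ coming from both $G$ and $X$. The key algebraic fact is that, since $G = G^u$, the group $G$ has no non-trivial characters, so $\OOO(G)^\ast = \C^\ast$. Combined with the standard decomposition $\OOO(A \times B)^\ast / \C^\ast \cong \OOO(A)^\ast / \C^\ast \oplus \OOO(B)^\ast / \C^\ast$ for irreducible affine varieties $A$, $B$, this yields
\[
\OOO(G \times (\C^\ast)^n)^\ast = \C^\ast \cdot \{\, t_1^{a_1} \cdots t_n^{a_n} : (a_1,\dots,a_n) \in \ZZ^n \,\},
\]
so every unit on $G \times (\C^\ast)^n$ depends only on the $(\C^\ast)^n$-coordinate. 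In particular, every morphism $G \times (\C^\ast)^n \to (\C^\ast)^n$ factors through the projection $\pi_2 \colon G \times (\C^\ast)^n \to (\C^\ast)^n$, so every $\Phi \in \Aut(G \times (\C^\ast)^n)$ has the form $\Phi(g,t) = (\Phi_1(g,t),\bar\Phi_2(t))$; applying the same reasoning to $\Phi^{-1}$ shows that $\bar\Phi_2 \in \Aut((\C^\ast)^n)$ and $\Phi_1(\cdot, t) \in \Aut(G)$ for each $t$.

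For the ``only if'' direction I would pick closed embeddings $f_1, f_2 \colon X \hookrightarrow G$ and form closed embeddings $F_i \colon X \hookrightarrow G \times (\C^\ast)^n$, $F_i(x) = (f_i(x), \mathbf{1})$, where $\mathbf{1} = (1,\ldots,1)$. By hypothesis there exists $\Phi \in \Aut(G \times (\C^\ast)^n)$ with $\Phi \circ F_1 = F_2$. Reading off second coordinates forces $\bar\Phi_2(\mathbf{1}) = \mathbf{1}$, and then the structural result identifies $\varphi := \Phi_1(\cdot,\mathbf{1})$ as an element of $\Aut(G)$; the first coordinate of the identity then reads $\varphi \circ f_1 = f_2$.

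For the ``if'' direction I would pick closed embeddings $F_1, F_2 \colon X \hookrightarrow G \times (\C^\ast)^n$ and write $F_i = (f_i, g_i)$. Since $X$ admits no non-constant invertible regular function to $\C^\ast$, each component of $g_i$ is constant, so $g_i \equiv c_i \in (\C^\ast)^n$; writing $F_i$ as the composition of $f_i$ with the slice inclusion $G \hookrightarrow G \times (\C^\ast)^n$, $g \mapsto (g, c_i)$, it follows that $f_i$ is itself a closed embedding. The hypothesis then provides $\varphi \in \Aut(G)$ with $\varphi \circ f_1 = f_2$, and $\Phi(g,t) := (\varphi(g), c_2 c_1^{-1} t)$ is an automorphism of $G \times (\C^\ast)^n$ satisfying $\Phi \circ F_1 = F_2$.

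The only technical input is the structural description of $\Aut(G \times (\C^\ast)^n)$ in the first paragraph, which rests on the unit group calculation above; this is a standard Rosenlicht-type statement, and once it is in hand both directions are essentially formal.
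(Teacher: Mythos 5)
Your proposal is correct and follows essentially the same route as the paper: both directions rest on the two rigidity facts that $X$ has no non-constant units (so embeddings land in fibers of the projection to $(\C^\ast)^n$) and that $G=G^u$ has no non-constant units by Rosenlicht (so automorphisms of $G\times(\C^\ast)^n$ permute those fibers). You merely make the fiber-permutation statement more explicit by decomposing automorphisms as $(g,t)\mapsto(\Phi_1(g,t),\bar\Phi_2(t))$, which is a fine elaboration of the same argument.
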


\begin{proof}
	Let $f_i \colon X \to G \times (\C^*)^n$, $i=1, 2$
	be two closed embeddings.
	By assumption, $f_i(X)$ lies in some fiber of
	$\pi \colon G \times (\C^*)^n \to (\C^*)^n$ for $i = 1, 2$.
	After multiplying with a suitable element of $G \times (\C^*)^n$
	we can assume that
	$f_1(X)$ and $f_2(X)$ lie in the same fiber of $\pi$.
	Since any automorphism of
	one fiber can be extended to $G \times (\C^*)^n$, this proves
	the if-part of the proposition.
	
	The other direction works much the same way by using the fact, that every
	automorphism of $G \times (\C^*)^n$ permutes the fibers of $\pi$, since
	$G = G^u$ and thus there are no
	non-constant invertible functions
	$G \to \C^\ast$;
	see \cite[Theorem~3]{Ro1961Toroidal-algebraic}.
\end{proof}

\begin{lemma}
	\label{lem.Product}
	Let $G$ be a connected algebraic group. Then
	$G$ is isomorphic as a variety to $G^u \times (\C^*)^n$
	for a certain non-negative integer $n$.
\end{lemma}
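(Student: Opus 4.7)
The plan is to construct a morphism $s\colon G/G^u\to G$ that is an algebraic-group section of the quotient map $\phi\colon G\to G/G^u$ and lands in a subtorus of $G$. Granted such an $s$, the map
\[
G^u\times(G/G^u)\longrightarrow G,\qquad (u,q)\longmapsto u\cdot s(q),
\]
is an isomorphism of varieties (with inverse $g\mapsto(g\cdot s(\phi(g))^{-1},\phi(g))$); combined with the identification $G/G^u\cong(\C^\ast)^n$ proved below, this yields the lemma.

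Set $Q:=G/G^u$. First I would verify that $Q$ is a torus. Because $R_u(G)\subseteq G^u$, the unipotent radical of $Q$ is trivial, so $Q$ is reductive. In the decomposition $Q=Z(Q)^\circ\cdot[Q,Q]$, the derived subgroup $[Q,Q]$ is semisimple and hence generated by its one-parameter unipotent subgroups. But $Q$ has no non-trivial unipotent element (every such element lies in $\ker\phi=G^u$), so $[Q,Q]$ is trivial and $Q=Z(Q)^\circ\cong(\C^\ast)^n$ for some $n\geq 0$.

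Next I would construct $s$ using a maximal torus. By Mostow's theorem (we are in characteristic zero), choose a Levi decomposition $G=R_u(G)\rtimes L$ with $L$ reductive, and a maximal torus $T\subseteq L$. Since a connected semisimple group is generated by its unipotent elements, one checks that $R_u(G)\rtimes[L,L]$ is a normal subgroup of $G$ containing all unipotents and having torus quotient $L/[L,L]$, so $G^u=R_u(G)\rtimes[L,L]$ and $Q=L/[L,L]$. Using $L=Z(L)^\circ\cdot[L,L]$ with $Z(L)^\circ\subseteq T$, the restriction $\phi|_T\colon T\to Q$ is already surjective; its kernel $T\cap[L,L]$ is a maximal torus of the semisimple group $[L,L]$ and in particular connected. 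The resulting short exact sequence of algebraic tori
\[
1\longrightarrow T\cap[L,L]\longrightarrow T\longrightarrow Q\longrightarrow 1
\]
dualizes to a short exact sequence of free finitely generated character lattices, which splits because $X^\ast(T\cap[L,L])$ is torsion-free; redualizing produces the desired algebraic-group section $s\colon Q\to T\subseteq G$.

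The only delicate point is the explicit identification $G^u=R_u(G)\rtimes[L,L]$, which rests on the fact that a connected semisimple group is generated by its unipotent elements together with normality of $R_u(G)$ in $G$; once this is in hand, the remainder of the argument is an application of standard linear-algebraic-group structure theory, and no substantial obstacle arises.
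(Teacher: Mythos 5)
Your argument is correct and follows essentially the same route as the paper: both reduce the statement to splitting the short exact sequence of diagonalizable groups $1 \to T\cap G^u \to T \to G/G^u \to 1$ for a suitable maximal torus $T$ (which hinges on the kernel being connected, i.e.\ a subtorus) and then use the resulting section to trivialize $G \to G/G^u$ as a principal $G^u$-bundle. The only differences are cosmetic: you obtain the surjectivity of $T \to G/G^u$ via a Levi decomposition and the identification $G^u = R_u(G)\rtimes[L,L]$, where the paper simply cites $G = TG^u$, and you assert as standard that $T\cap[L,L]$ is a (connected) maximal torus of $[L,L]$, which is exactly the point the paper isolates and proves as Lemma~\ref{lem.Intersetion_torus_normal-subgroup}.
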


\begin{proof}
	Note that $G/G^u$ is a torus, since it is connected and
	contains only semisimple elements; see
	\cite[Proposition~21.4B and Theorem~19.3]{Hu1978Introduction-to-Li}.
	Let $T$ be a maximal torus of $G$.
	Since $G^u$ is normal in $G$,
	and since $G^u$ and $T$ generate $G$
	(see \cite[Theorem~27.3]{Hu1975Linear-algebraic-g}) we have
	$TG^u= G$.
	In particular, $T$ is mapped surjectively onto the torus $G/G^u$ via
	the canonical projection $\pi \colon G \to G/G^u$.
	Thus we get a short exact sequence
	\[
		1 \longrightarrow G^u \cap T \longrightarrow
		T \stackrel{\pi |_T}{\longrightarrow} G/G^u
		\longrightarrow 1 \, .
	\]
	By Lemma~\ref{lem.Intersetion_torus_normal-subgroup},
	$G^u \cap T$ is a torus.
Thus the above short exact sequence splits{;
	see~\cite[\S16.2]{Hu1975Linear-algebraic-g}}.
	In particular, the associated section yields a trivialization of
	 $\pi \colon G \to G/ G^u$ as a principal $G^u$-bundle,
	which proves the lemma.
\end{proof}

\begin{lemma}
	\label{lem.Intersetion_torus_normal-subgroup}
	Let $G$ be any connected algebraic group and let $H$ be a closed
	connected normal subgroup of $G$. If $T$ is a maximal torus of $G$,
	then $T \cap H$ is a maximal torus of $H$.
\end{lemma}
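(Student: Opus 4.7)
The plan is to explicitly exhibit a maximal torus of $H$ inside $T\cap H$ and then establish the reverse inclusion via a centralizer argument.

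First, I would pick an arbitrary maximal torus $S$ of $H$. Since $S$ is a torus of $G$, it lies in some maximal torus of $G$, and since all maximal tori of $G$ are conjugate there is some $g\in G$ with $gSg^{-1}\subseteq T$. Normality of $H$ in $G$ ensures $gSg^{-1}\subseteq gHg^{-1}=H$, and conjugation by $g$ is an automorphism of $H$ sending maximal tori to maximal tori. Replacing $S$ by $gSg^{-1}$, we may therefore assume $S\subseteq T\cap H$, so that $T\cap H$ contains a maximal torus of $H$.

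Next, I would pass to the identity component $(T\cap H)^\circ$, which, as a closed connected subgroup of the torus $T$, is itself a torus. Being contained in $H$ and containing $S$, it is a torus of $H$ containing the maximal torus $S$, so maximality of $S$ forces $(T\cap H)^\circ=S$. It remains to establish the reverse inclusion $T\cap H\subseteq S$, which will simultaneously yield the equality $T\cap H=S$ and the connectedness of $T\cap H$.

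For this, let $t\in T\cap H$. Since $t\in T$, $S\subseteq T$, and $T$ is commutative, $t$ centralizes $S$, so $t\in\Cent_H(S)$. Now $\Cent_H(S)$ is a Cartan subgroup of the connected group $H$: the centralizer of a maximal torus in a connected algebraic group is connected and nilpotent, and in a connected nilpotent algebraic group the semisimple elements form a subtorus, namely the unique maximal torus. Hence the torus of semisimple elements of $\Cent_H(S)$ is a torus of $H$ containing $S$, and by maximality of $S$ it equals $S$. Since $t$ is semisimple as an element of the torus $T$, we conclude $t\in S$, finishing the proof.

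The main obstacle is the connectedness of $T\cap H$: closed subgroups of a torus can fail to be connected even when the quotient is again a torus, so genuine input is needed beyond producing $S$ inside $T\cap H$. The key fact that rules out disconnected pieces is the structural statement that the semisimple elements of the Cartan subgroup $\Cent_H(S)$ are exactly those of $S$; the other ingredients (conjugacy of maximal tori and normality of $H$) are routine bookkeeping.
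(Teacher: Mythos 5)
Your proof is correct and follows essentially the same route as the paper: conjugate a maximal torus of $H$ into $T$ using conjugacy of maximal tori in $G$ together with normality of $H$, identify it with $(T\cap H)^\circ$, and then rule out extra components by showing that a semisimple element of $T\cap H$ centralizing this maximal torus must already lie in it. The only cosmetic difference is that the paper invokes the statement that a semisimple element centralizing a torus lies in a common torus with it (\cite[Corollary~B, \S22.3]{Hu1975Linear-algebraic-g}), whereas you derive the same conclusion from the structure of the Cartan subgroup $\Cent_H(S)$ as a connected nilpotent group whose semisimple elements form its unique maximal torus.
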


\begin{proof}
	Let $T' \subseteq H$ be a maximal torus that contains the connected
	component of the identity element
	$(T \cap H)^{\circ}$ which is also a torus.
	Since all maximal tori in $G$
	are conjugate, there exists $g \in G$ such that $g^{-1}T' g \subseteq T$.
	By the normality of $H$ we get
	$g^{-1}T' g \subseteq (T \cap H)^{\circ}$. Hence
	\[
		g^{-1} (T \cap H)^{\circ} g \subseteq g^{-1}T' g \subseteq
		(T \cap H)^{\circ} \, .
	\]
	Thus $(T \cap H)^{\circ} = g^{-1} T' g$ is a maximal torus of $H$
	(note that all maximal tori of $H$ are conjugate, since $H$
	is connected).
	Now, if there exists $x \in T \cap H \setminus (T \cap H)^{\circ}$,
	then clearly $x$ is semisimple and centralizes the
	torus $(T \cap H)^{\circ}$. However,
	this implies that $\{ x \} \cup (T \cap H)^{\circ}$ lies in a torus of $H$,
	since $H$ is connceted
	(see \cite[Corollary~B, \S22.3]{Hu1975Linear-algebraic-g}).
	This contradicts the maximality of $(T \cap H)^{\circ}$ and thus
	$T \cap H = (T \cap H)^{\circ}$ is a maximal torus of $H$.	
\end{proof}

We are now in position, to formulate our main result of this section.
%
%
\begin{theorem}
	\label{thm.reduction_to_semisimple}
	Let $G$ be a connected algebraic group with
	$G = G^u$. If $G$ is not semisimple
	and not isomorphic to $\AA^3$ as a variety, then
	all embeddings of $\AA$ into $G$ are equivalent.
\end{theorem}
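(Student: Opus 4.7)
The plan is to split into cases based on the unipotent radical $R_u(G)$. Since $G = G^u$ and $G$ is not semisimple, $R_u(G)$ must be non-trivial: indeed, if $R_u(G)$ were trivial then $G$ would be reductive, and a reductive group generated by its unipotent elements is semisimple, contradicting the hypothesis.

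\textbf{Case 1: $R_u(G) = G$, i.e., $G$ is unipotent.} Then $G$ is isomorphic as a variety to $\AA^n$ for some $n \geq 1$. The hypothesis $G \not\cong \AA^3$ forces $n \neq 3$, so uniqueness of embeddings of $\AA$ into $\AA^n$ follows from the Abhyankar-Moh-Suzuki theorem when $n \leq 2$ and from Jelonek's extension when $n \geq 4$.

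\textbf{Case 2: $R_u(G)$ is a proper non-trivial normal subgroup.} The Levi decomposition gives a splitting $G = R_u(G) \rtimes L$ as algebraic groups, hence a product decomposition $G \cong R_u(G) \times L$ of varieties, where $L = G/R_u(G)$ is reductive. Because $G = G^u$ and unipotent elements map to unipotent elements under any quotient, the group $L$ is itself generated by its unipotent elements; together with reductivity this forces $L$ to be semisimple, and $L$ is non-trivial since $R_u(G)$ is proper. Thus $G \cong \AA^m \times L$ with $m = \dim R_u(G) \geq 1$ and $L$ a positive-dimensional semisimple group. Proposition~\ref{prop.Product} then applies and yields the equivalence of all embeddings of $\AA$ into $G$.

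The substantive content of the proof sits entirely in Case 2 and is absorbed into Proposition~\ref{prop.Product}; I expect its proof, consistent with the shearing philosophy of the paper, to use the shearing-tool (Proposition~\ref{prop.ConstructionOfAuto}) together with the two canonical projections onto $R_u(G) \cong \AA^m$ and onto $L$ to shear any embedded $\AA$ into a standard position (for instance into the unipotent factor, where Proposition~\ref{prop.firstExample} then finishes the job). In Case 1 the classical embedding theorems do all the work, and the assumption $G \not\cong \AA^3$ is precisely what allows the argument to sidestep the long-standing open problem for $\AA \hookrightarrow \AA^3$.
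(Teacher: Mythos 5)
Your proof is correct and follows essentially the same route as the paper: split on whether $G=R_u(G)$, handle the unipotent case via Abhyankar--Moh--Suzuki and Jelonek, and otherwise feed $K=R_u(G)$ and the semisimple quotient $H=G/R_u(G)$ into Proposition~\ref{prop.Product}. The only cosmetic difference is that you obtain the variety-level product decomposition from the Levi decomposition, whereas the paper gets it from the triviality of principal unipotent-group bundles over affine bases (Remark~\ref{rem.Triviality_of_affine_bundles}); both are valid in characteristic zero.
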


Using Lemma~\ref{lem.reduction_to_characterless} and
Lemma~\ref{lem.Product},
Theorem~\ref{thm.reduction_to_semisimple} reduces the proof of Theorem~\ref{thm:mainthm} to the case
that the group under consideration is semisimple
and not isomorphic to $\SL_2$ or $\PSL_2$; compare with the
proof of Theorem~\ref{thm:mainthm} in Section~\ref{sec.Simple}.

The rest of this section is devoted to the proof of
Theorem~\ref{thm.reduction_to_semisimple}. First we have to do some
preliminary work.

\begin{proposition}
	\label{prop.Product}
	Let $K$ be a connected group that contains
	non-trivial unipotent elements
	and let $H$ be a semisimple group
	{(which is non-trivial by convention)}. Then
	all embeddings of $\AA$ into $K \times H$ are
 equivalent.
\end{proposition}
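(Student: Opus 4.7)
The plan is to show that every embedding $f\colon \AA \to K \times H$ is equivalent, via an automorphism of $K \times H$, to a fixed ``standard'' embedding $\sigma(t) = (u(t), e_H)$, where $u\colon \AA \to U$ is an isomorphism onto a one-dimensional unipotent subgroup $U \subseteq K$ (available by the hypothesis on $K$). Since $\sigma$ has image the one-dimensional unipotent subgroup $U \times \{e_H\}$ of $K \times H$, Proposition~\ref{prop.firstExample} then completes the argument.

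Writing $f = (f_K, f_H)$ with $f_K = \pi_K \circ f$ and $f_H = \pi_H \circ f$, I proceed in three main steps. \emph{Step 1} arranges, via a preliminary shear, that $f_K$ is a closed embedding. \emph{Step 2} applies the shearing tool (Proposition~\ref{prop.ConstructionOfAuto}) with the quotient $K \times H \to K$ by $\{e_K\} \times H$ to move $f$ onto an embedding with $f_H = e_H$: set $\gamma(k) = f_H(r(k))^{-1}$, where $r\colon K \to \AA$ extends the isomorphism $f_K^{-1}\colon f_K(\AA) \to \AA$ (possible because $f_K(\AA) \cong \AA$ is closed in the affine variety $K$), and apply the shear $(k, h) \mapsto (k, h\gamma(k))$. \emph{Step 3}, using a one-dimensional unipotent $V \subseteq H$ with isomorphism $v\colon \AA \to V$ (which exists since $H$ is semisimple and non-trivial), applies three consecutive shears: insert $v(t)$ into the $H$-coordinate via $(k, h) \mapsto (k, h\gamma_1(k))$ with $\gamma_1$ extending $f_K(t) \mapsto v(t)$; replace $f_K(t)$ by $u(t)$ in the $K$-coordinate via $(k, h) \mapsto (k\beta(h), h)$ with $\beta$ extending $v(t) \mapsto f_K(t)^{-1}u(t)$; and finally erase $v(t)$ via $(k, h) \mapsto (k, h\gamma_2(k))$ with $\gamma_2$ extending $u(t) \mapsto v(t)^{-1}$. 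Each auxiliary morphism is built by the same extension-through-$\AA$ method: extend an isomorphism from a closed subvariety $\cong \AA$ to a regular function to $\AA$ on the ambient affine group, then compose.

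The principal obstacle is Step 1, specifically the subcase where both $f_K$ and $f_H$ fail to be embeddings; note that both are then necessarily non-constant, as $f$ is an embedding. The easier subcases are handled directly: if $f_K$ is already an embedding, do nothing; if $f_K$ is constant, then $f_H$ must be an embedding and the shear $(k, h) \mapsto (k\alpha(h), h)$ with $\alpha\colon H \to U$ extending $u \circ f_H^{-1}$ turns $f_K$ into a translate of $u$. In the delicate subcase I would apply a preliminary shear $(k, h) \mapsto (k, h \cdot v(\widetilde G(k)))$ for a carefully chosen regular function $\widetilde G \in \mathcal{O}(K)$, arranging that $t \mapsto f_H(t) \cdot v(\widetilde G(f_K(t)))$ becomes an embedding; then the symmetric version of Steps 1--3 (with the roles of $K$ and $H$ interchanged) applies. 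Constructing such a $\widetilde G$ is the crux: the embedding property of $f$ ensures that pairs $(t_1, t_2)$ at which $f_H$ fails to be injective correspond to distinct $f_K$-values (and analogously for tangent directions where $f_H$ fails to be immersive), so that a suitably chosen $\widetilde G \in \mathcal{O}(K)$---whose pullback along $f_K$ separates these pairs---yields the desired embedding, in the spirit of the generic-projection arguments of Section~\ref{sec.genproj}.
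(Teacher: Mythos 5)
Your overall architecture coincides with the paper's: shear so that one factor-projection becomes an embedding, use the shearing construction of Section~\ref{sec.Auto} to collapse the curve into one factor, move it onto a one-dimensional unipotent subgroup, and finish with Proposition~\ref{prop.firstExample}. Your Steps~2 and~3 are correct (the extension-through-$\AA$ trick is exactly the one underlying Proposition~\ref{prop.ConstructionOfAuto}). However, Step~1 --- which you correctly identify as the crux --- has a genuine gap in the delicate subcase, and the missing content is precisely what the paper's proof spends most of its effort on.

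The problem is with the claim that a shear $(k,h)\mapsto (k,\,h\cdot v(\widetilde G(k)))$ can be chosen to make $t\mapsto f_H(t)\cdot v(\widetilde G(f_K(t)))$ injective. Such a shear only moves points within right cosets of $V$, so injectivity after the shear can fail exactly at pairs $(t_1,t_2)$, $t_1\neq t_2$, with $f_H(t_2)^{-1}f_H(t_1)\in V$; equivalently, the obstruction is governed by the fibers of the composition $X\to H\to H/V$. This bad locus need not be finite: it is a closed subvariety of $\{(t_1,t_2): t_1\neq t_2\}$ that can be a curve (it is finite precisely when $X\to H/V$ is generically injective, which you have not arranged). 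When it is positive-dimensional, your ``separate the pairs by a suitable $\widetilde G$'' argument breaks down: for each bad pair one gets one affine-linear condition on $\widetilde G$, but an uncountable (one-parameter) family of affine hyperplanes can cover the whole space of candidate functions; concretely, the function $(t_1,t_2)\mapsto \widetilde G(f_K(t_2))-\widetilde G(f_K(t_1))-c(t_1,t_2)$ restricted to the bad curve is in general a non-constant regular function on an affine curve and therefore has zeros, no matter how $\widetilde G$ is chosen. The fact that $f$ is an embedding (so bad pairs have distinct $f_K$-values) does not help here. The paper's proof avoids this by two preliminary reductions before running the affine-linear genericity argument: Lemma~\ref{lemma:biratproj} arranges that $\pi_H|_X$ is birational onto its image (itself a nontrivial argument involving tangent directions at the preimages of a singular point and the normalization of the image curve), and Corollary~\ref{cor.Birational_projection} --- which uses flexibility of the semisimple group $H$ and the estimate $\dim\UUU_H\geq 2$ --- chooses the one-dimensional unipotent subgroup $U\subseteq H$ so that the further projection $X\to H/U$ is birational. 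Only then are the sets $E$ and $X'$ of defective points finite, so that conditions $i)$ and $ii)$ amount to finitely many non-trivial affine-linear equations to avoid. Your proof needs these reductions (or an equivalent substitute); as written, the key step is asserted rather than proved, and the naive genericity argument you gesture at does not close it.
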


\begin{proof}
	Let $\AA \cong X \subseteq K \times H$ be an embedding.
	We can assume that the canonical projection
	$\pi_H \colon K \times H \to H$ maps $X$ birationally onto its image;
	compare Lemma~\ref{lemma:biratproj} below.
	We can apply Corollary~\ref{cor.Birational_projection} to the group $H$
	and the curve $\pi_H(X)$,
	since $H$ is a {(non-trivial)} semisimple group.
	Hence we can assume that there exists
	a one-dimensional unipotent subgroup $U \subseteq H$
	such that the composition
	\[
		\rho \colon K \times H \stackrel{\pi_H}{\longrightarrow}
		H \stackrel{}{\longrightarrow} H/U
	\]
	restricts to a birational morphism $X \to \rho(X)$.
	Let $E$ be the finite subset of elements $z$ in $H/U$ such that
	the fiber over $z$ of $\rho |_X$ contains more than one element.
	Moreover, let $X'$ be the finite subset of $X$
	of critical points of $\rho |_X$.
	For a morphism $f \colon K \to U$ consider
	the two properties:
	\begin{enumerate}[$i)$]
	\item \label{enum.Function}
	For every $z \in E$ and for every pair $(k, h),  (k', h')$ in
	$\rho^{-1}(z) \cap X$ with $(k, h) \neq (k, h')$ we have
	\[
		h f(k) \neq h' f(k') \, .
	\]
	\item \label{enum.Differential}
	For every $x' \in X'$ the differential of
	\[
		\eta_f \colon X \longrightarrow H \, , \quad x \mapsto \pi_H(x) f(\pi_K(x))
	\]
	in $x'$ is non-vanishing.
	\end{enumerate}
	If we consider $U$ as a one-dimensional vector space,
	then for every pair of points $(k, h) \neq (k', h')$ in $\rho^{-1}(z) \cap X$,
	the expression $h f(k) = h' f(k')$
	defines a non-trivial affine linear equation for $f$
	in the vector space of maps $K \to U$
	(note that by assumption $(h')^{-1}h$ lies in $U$).
	Moreover, we claim that for every $x' \in X'$ the vanishing
	of the differential $D_{x'} \eta_f$
	defines a non-trivial affine linear
	equation for $f$ in the vector space of maps $K \to U$.
	Indeed, let $x' \in X'$ and let $W$ be an open neighbourhood of $\rho(x')$
	in $H/U$ in the Euclidean topology such that $H \to H/U$
	gets trivial over $W$. Then the map $\eta_f$ can be written
	in a Euclidean neighbourhood $U_{x'}$ in $X$ around $x'$ as
	\[
		U_{x'} \stackrel{}{\longrightarrow} W \times U \, , \quad
		x \mapsto (\rho(x), q(x)f(\pi_K(x))) \, ,
	\]
	where $q \colon U_{x'} \to U$ defines a {holomorphic}
	map (that does not depend on $f$) with the following property:
	If the differential $D_{x'} q$ vanishes, then the differential
	$D_{x'}(\pi_K |_X)$ is non-vanishing. Now the vanishing of the
	differential of $\eta_f$ in $x'$ is equivalent to the vanishing
	of the linear map
	\[
		D_{x'} q + D_{\pi_K(x')} f \circ D_{x'}( \pi_K |_X) \colon T_{x'} X \to U \, ,
	\]
	where we consider again $U$ as a one-dimensional vector space.
	However, this last condition defines a non-trivial
	affine linear equation for $f$. This proves the claim.
	In summary, we showed that there exists $f_0 \colon K \to U$ such that
	$\ref{enum.Function})$
	and $\ref{enum.Differential})$ are satisfied.
	Define
	\[
		\psi_0 \colon K \times H \to K \times H \, , \quad
		(k, h) \mapsto (k, h f_0(k)) \, .
	\]
	Then, the restriction of $\pi_H$ onto $\psi_0(X)$ is injective and immersive,
	since $f_0$ satisfies~$\ref{enum.Function})$ and $\ref{enum.Differential})$.
	Since $X \cong \AA$, the map $\pi_H$ restricts to an embedding
	on $\psi_0(X)$. Hence, after composing $\psi_0$
	with an automorphism of $K \times H$
	we can assume that $X$ lies in $H$;
	{see Proposition~\ref{prop.ConstructionOfAuto}}. Let $V \subseteq K$
	be any one-dimensional unipotent subgroup and let $f_1 \colon H \to V$
	be a morphism that restricts to an isomorphism on $X$.
	Let $\psi_1$ be defined as
	\[
		\psi_1 \colon K \times H \to K \times H \, , \quad
		(k, h) \mapsto (k f_1(h), h) \, .
	\]
	It follows that $\pi_K$ maps $\psi_1(X)$ isomorphically onto $V$.
	Hence, there exists an automorphism of
	$K \times H$ that sends $\psi_1(X)$ into $V$;
	{see Proposition~\ref{prop.ConstructionOfAuto}}.
	Thus the proposition follows from Proposition~\ref{prop.firstExample}.
\end{proof}

\begin{lemma}\label{lemma:biratproj}
Let $H$ be an algebraic group with $\dim H^u\geq 2$ and let $K$ be
any affine variety.
For any closed curve $X\subset K\times H$ that is isomorphic to $\AA$,
there exists an automorphism $\psi$ of $K\times H$ such that the canonical
projection $\pi_H \colon K \times H \to H$ restricts to a birational map
$X \to \psi(X)$.
\end{lemma}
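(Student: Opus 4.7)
The plan is to realize $\psi$ as a shearing of $K \times H$ in the $H$-direction, namely $\psi_f(k, h) = (k, h \cdot u(f(k)))$, where $U \subseteq H$ is a one-dimensional unipotent subgroup with group isomorphism $u \colon \CC \to U$ and $f \in \OOO(K)$ is a suitable regular function. Such a $U$ exists (in fact infinitely many) because $\dim H^u \geq 2$ forces $H^u$ to be a non-trivial connected group generated by unipotent elements, whose variety of unipotent elements then has dimension at least two. Parametrizing $X$ as $t \mapsto (\alpha(t), \beta(t))$, one has $\pi_H(\psi_f(\alpha(t), \beta(t))) = \beta(t) u(f(\alpha(t))) =: \gamma_f(t)$, and the task is to choose $U$ and $f$ so that $\gamma_f \colon \AA \to H$ is birational onto its image.

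I would dispose of the degenerate cases first. If $\alpha$ is constant then $X \subseteq \{k_0\} \times H$ and $\pi_H|_X$ is already a closed embedding, so $\psi = \id$ works; similarly if $\beta$ is already birational onto its image. If $\beta$ is constant equal to $h_0$ while $\alpha$ is non-constant, then $\alpha$ realizes $X$ as a closed copy of $\AA$ in $K$, so $\alpha^* \colon \OOO(K) \to \CC[t]$ is surjective; choosing any $U$ and any $f$ with $f \circ \alpha = t$ gives $\gamma_f(t) = h_0 u(t)$, which is a closed embedding onto $h_0 U$.

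The main case is when both $\alpha$ and $\beta$ are non-constant and $\beta$ is not birational onto its image. I would first choose $U$ so that $\overline{W} := \{(t, s) \in \AA^2 : \beta(s)^{-1}\beta(t) \in U\}$ is a proper closed subset of $\AA^2$; this is possible since $\overline{W} = \AA^2$ forces $\overline{\beta(\AA)}$ to be a left coset of $U$, which can hold for at most one $U$. Birationality of $\gamma_f$ is equivalent to the closed set $\overline{V}_f := \{(t,s) \in \AA^2 : \gamma_f(t) = \gamma_f(s)\} \subseteq \overline{W}$ having only the diagonal $\Delta$ as a positive-dimensional irreducible component. Let $C_1, \dots, C_r$ be the finitely many one-dimensional irreducible components of $\overline{W}$ other than $\Delta$; the condition $C_i \subseteq \overline{V}_f$ becomes the single affine equation
\[
(f \circ \alpha(s) - f \circ \alpha(t))\big|_{C_i} \;=\; u^{-1}(\beta(s)^{-1}\beta(t))\big|_{C_i}
\]
in $\OOO(C_i)$, so its solution set $B_i \subseteq \OOO(K)$ is empty or an affine subspace.

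The main obstacle I anticipate is verifying that each $B_i$ is a \emph{proper} subset of $\OOO(K)$. Indeed, if $B_i$ were all of $\OOO(K)$ then the left-hand side of the displayed equation would vanish identically on $C_i$ for every $f$, which by point-separation of regular functions on the affine variety $K$ gives $\alpha(t) = \alpha(s)$ on $C_i$; this in turn forces the right-hand side to vanish, yielding $\beta(t) = \beta(s)$ on $C_i$; but together $\alpha(t) = \alpha(s)$ and $\beta(t) = \beta(s)$ on $C_i$ contradict the injectivity of the embedding $t \mapsto (\alpha(t), \beta(t))$, since $C_i \not\subseteq \Delta$. Once this is established, $\OOO(K)$ is an infinite-dimensional $\CC$-vector space (as $\dim K \geq 1$ in the main case), so the finite union $\bigcup_{i=1}^{r} B_i$ does not cover it, and any $f$ outside yields the desired $\psi = \psi_f$.
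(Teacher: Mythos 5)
Your proof is correct, and although it uses the same family of automorphisms as the paper --- shears $\psi_f(k,h)=(k,\,h\cdot u(f(k)))$ into a one-dimensional unipotent subgroup $U\subseteq H$, together with the same observation that $\dim H^u\geq 2$ supplies more than one candidate $U$ --- the way you certify birationality is genuinely different. The paper works locally over a single well-chosen regular value $e\in\pi_H(X)$: it picks $f$ vanishing on the fiber and with prescribed derivatives at the points $k_i$ so that the $n$ branches of $\psi_f(X)$ over $e$ acquire pairwise distinct tangent directions, and then deduces birationality by analysing the normalization $\tilde Z\to Z=\pi_H(\psi_f(X))$ (the branches separate in $\tilde Z$, the map $\psi_f(X)\to\tilde Z$ is \'etale at the $x_i$ with reduced singleton fibers, and $\psi_f(X)\cong\AA$ forces it to be an isomorphism). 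You instead control the global non-injectivity locus $V_f\subseteq\AA^2$ directly: the inclusion $V_f\subseteq W$, the properness of $W$ for all but at most one choice of $U$, and the fact that containment of each excess component $C_i$ in $V_f$ is a proper affine-linear condition on $f\in\OOO(K)$ reduce everything to the elementary statement that a complex vector space is not a finite union of proper affine subspaces. Your route avoids the normalization argument and the choice of a regular value entirely, at the cost of an explicit case analysis on $\alpha$ and $\beta$; it is also closer in spirit to how the paper itself chooses $f_0$ in the proof of Proposition~\ref{prop.Product}, where the relevant conditions are likewise non-trivial affine-linear equations for $f$. One small slip in the write-up of the key step: to rule out $B_i=\OOO(K)$ you should first specialize to $f=0$ to see that the right-hand side vanishes on $C_i$ (hence $\beta(t)=\beta(s)$ there), and then that the linear part $f\mapsto\bigl(f\circ\alpha(s)-f\circ\alpha(t)\bigr)\big|_{C_i}$ vanishes for all $f$ (hence $\alpha(t)=\alpha(s)$ by point-separation); as written, the clause asserting that $\alpha(t)=\alpha(s)$ forces the right-hand side to vanish is a non-sequitur, though the intended conclusion is correct and immediate.
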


\begin{proof}
We only consider the case that $K$ has dimension at least 1 (otherwise $\pi_H$ restricts to an embedding on $X$). By the same argument, we can
assume that the canonical projection $\pi_K \colon K\times H \to K$ is non-constant
on $X$.
We will use automorphisms of the form
\begin{equation}
	\label{eq:auto}
	\psi_f \colon K\times H\to K\times H \, , \quad (k,h)\mapsto (k,f(k)h) \, ,
\end{equation}
where $f\colon K\to U$ is a map to a one-dimensional unipotent subgroup $U$ of $H$. 

Let us first consider the case where $\pi_H(X)$ is zero-dimensional, i.e.~$\pi_H(X)$ is a point, and show that we can change that by applying an automorphism of the form~\eqref{eq:auto}.
Without loss of generality, we may assume that the point $\pi_H(X)$ is the
identity element $e$ of $H$, i.e.~$X$ lies in $K$.
Choose any non-trivial one-dimensional unipotent subgroup $U \subseteq H$
and let $f \colon K \to U$ be a morphism that is non-constant on $X$.
Thus $\pi_H(\psi_f(X))$ is one-dimensional.


By the above we may assume that $\pi_H(X)$ is one-dimensional. We consider a regular value {$h \in \pi_H(X)$} of the map $\pi_{H} |_X \colon X \to \pi_H(X)$ in the smooth locus of $\pi_H(X)$. {Since $\pi_K |_X$ is non-constant,
we can assume that the differential of $\pi_K |_X$ is non-vanishing
in every point of the fiber $(\pi_H |_X)^{-1}(h)$.}
As before we may assume that $h$
is the identity element $e$ of $H$. Denote by
\[
	x_1 = (k_1,e) \, , \ldots \, , x_n=(k_n,e)
\]
the elements of the fiber $(\pi_{H}|_X)^{-1}(e)$.
Note that for $i = 1, \ldots, n$ the lines $D_{x_i}\pi_H(T_{x_i}X)$
are all the same in $T_eH$ (otherwise
$e$ lies not in the smooth part of $\pi_H(X)$). Let us denote this line
in $T_eH$ by $l$.
We next establish that there is a automorphism $\psi_f$ of the form~\eqref{eq:auto} such that for all $1\leq i<j\leq n$
\begin{itemize}
\item $\psi_f(x_i)=x_i$,
\item $\psi_f(X)\cap \pi_H^{-1}(e)=\{x_1,\ldots,x_n\}$, and
\item
$D_{x_i}\pi_H(T_{x_i}\psi_f(X)) \neq D_{x_j}\pi_H(T_{x_j}\psi_f(X))$.
\end{itemize}
Since $\dim H^u \geq 2$, we find a
one-dimensional unipotent subgroup $U\subset H$ such that
$T_eU$ differs from $l$ and such that $\pi_H(X)\cap U$ is finite.
The first two conditions are arranged by choosing
an $f \colon K \to U$ with
\begin{equation}
	\label{eq:2propoff}
	f(k)=e \quad \text{ for all }
	k\in \pi_K\left(\{x_1,\ldots,x_n\}\cup\pi_H^{-1}(\pi_H(X)\cap U)\right).
\end{equation}
Let $t_i=v_i\oplus w_i\in T_{x_i}X\subset T_{k_i}K\oplus T_eH$ be non-zero tangent vectors to $X$ at $x_i$ for all $1\leq i\leq n$. We calculate
$D_{x_i}(\pi_H\circ\psi_f)(t_i)$ for any $f$
satisfying~\eqref{eq:2propoff}. In fact, by writing
$T_{(k_i,e)}(K\times H)=T_{k_i}K\oplus T_eH$, we get that
\[
	D_{(k_i, e)}\psi_f
	=\begin{pmatrix}
		\id & 0\\
		D_{k_i} f & \id
	\end{pmatrix} \, ,
\]
and thus
\[
	D_{x_i}(\pi_H\circ\psi_f)(t_i)=D_{k_i}f(v_i)+w_i \, .
\]
Since $D_{k_i}f(v_i)\in T_eU$, {$v_i \neq 0$}, $0 \neq w_i\in l$
and $l \neq T_eU$,
we see that we may choose $f$ (by prescribing its derivative at
$k_i$ for all $1\leq i\leq n$) such that
\[
	D_{x_i}(\pi_H\circ\psi_f)(t_i)\et D_{x_i}(\pi_H\circ\psi_f)(t_j)
\]
are linearly independent for all $1\leq i<j\leq n$.
Let $Y = \psi_f(X)$.

We conclude the proof by observing that $Y \to \pi_H(Y)$ is birational. Indeed,
let $Z = \pi_H(Y)$ and let $\eta \colon \tilde{Z} \to Z$ be the normalization,
which is birational. As $Y$ is smooth, $Y \to Z$ factorizes as $Y \to \tilde{Z} \to Z$.
Since $\eta$ factorizes through the blow-up of $Z$ in $e$,
$Z$ is closed in $H$, and the tangent directions of the branches of
$Z$ in $e$ are all different, it follows that $\eta^{-1}(e)$ consists of $n$ points,
say $v_1, \ldots, v_n$. After reordering the $v_1, \ldots, v_n$, we can assume
that $Y \to \tilde{Z}$ maps $x_i$ to $v_i$ for all $i$.
Since $Y \to \pi_H(Y)$ is immersive in $x_i$, it follows that $Y \to \tilde{Z}$
is \'etale in $x_i$ for all $i$.
Thus the fiber of $Y \to \tilde{Z}$ over $v_i$ consists only of $x_i$ and it is reduced
for all $i$. Since $Y \cong \C$, it follows that $\tilde{Z} \cong \C$ and therefore
$Y \to \tilde{Z}$ is an isomorphism. This proves that
$Y \to \tilde{Z} \to Z$ is birational.
\end{proof}


\begin{proof}[Proof of Theorem~\ref{thm.reduction_to_semisimple}]
	Note that if $F$ is a connected reductive group,
	then $F^u$ is semisimple or trivial. Indeed, by
	\cite[Proposition~14.2]{Bo1991Linear-algebraic-g} the derived
	group $[F, F]$ is semisimple (or trivial) and in fact, $F^u = [F, F]$,
	since $[F, F]$ contains all root subgroups with respect to any maximal torus
	{of $F$}.
	
	By definition, the quotient group $G / R_u(G)$
	is connected and reductive and since $G = G^u$, we get
	$G / R_u(G) = (G/R_u(G))^u$. Thus $G / R_u(G)$ is semisimple or trivial by
	the proceeding paragraph. By Remark~\ref{rem.Triviality_of_affine_bundles},
	$G$ is isomorphic as a variety to the product of
	$R_u(G)$ and $G / R_u(G)$. Now, we distinguish two cases:
	\begin{enumerate}[i)]
	\item $G \neq R_u(G)$.
	Since $G$ is not semisimple by assumption, the radical $R_u(G)$ is not trivial.
	Thus we can apply Proposition~\ref{prop.Product} to the non-trivial groups
	$K = R_u(G)$ and $H = G / R_u(G)$ and get the result.
	\item $G = R_u(G)$. Thus $G$ is isomorphic as a variety to $\CC^n$
	where $n$ is a non-negative integer $\neq 3$. Clearly, we can assume
	that $n > 1$. If $n = 2$, then the result follows from
	 the Abhyankar-Moh-Suzuki Theorem
	\cite[Theorem~1.2]{AbMo1975Embeddings-of-the-},
	\cite{Su1974Proprietes-topolog} and if $n \geq 4$, then the result follows
	from Jelonek's Theorem~\cite[Theorem~1.1]{Je1987The-extension-of-r}.
	\end{enumerate}
\end{proof}

\section{Reduction to simple groups}
\label{sec.ReductionSimple}

The aim of this section is to reduce our problem to the case of a simple
algebraic group.

\begin{proposition}
	\label{prop.non-semisimple}
	Let $G$ be a semisimple algebraic group that is not simple.
	Then, two embeddings of the affine line into $G$ are the same
	up to an automorphism of $G$.
\end{proposition}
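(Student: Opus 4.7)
Since $G$ is semisimple but not simple, I begin by writing $G$ as an almost direct product: pick a minimal non-trivial closed connected normal subgroup $G_1 \subseteq G$ (necessarily simple) and let $G_2 \subseteq G$ be the closed connected normal subgroup generated by the remaining simple factors. Then $G_1$ and $G_2$ commute elementwise, $Z := G_1 \cap G_2$ is a finite central subgroup of $G$, and the multiplication map $\mu \colon G_1 \times G_2 \to G$, $(g_1, g_2) \mapsto g_1 g_2$, is a finite \'etale cover with kernel $\{(z, z^{-1}) : z \in Z\}$.

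My plan is to lift embeddings along $\mu$, apply Proposition~\ref{prop.Product} on the honest direct product $G_1 \times G_2$, and then descend the resulting equivalence back to $G$. Given two embeddings $f_1, f_2 \colon \AA \to G$, the Hurwitz-type fact already used in Subsection~\ref{subsec.ComparisonC3andSL2}---every finite \'etale cover of $\AA$ is trivial---implies that each $f_i$ lifts to a closed embedding $\tilde f_i \colon \AA \to G_1 \times G_2$ with $\mu \circ \tilde f_i = f_i$. Proposition~\ref{prop.Product} applies to $K = G_1$ (a non-trivial connected group containing unipotent elements) and $H = G_2$ (semisimple), producing an automorphism $\tilde\varphi$ of $G_1 \times G_2$ with $\tilde\varphi \circ \tilde f_1 = \tilde f_2$.

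The main obstacle is descent: $\tilde\varphi$ need not commute with the action $(g_1, g_2) \cdot z = (g_1 z, g_2 z^{-1})$ of $Z$ on $G_1 \times G_2$, so it does not a priori yield an automorphism of $G$. To address this, I plan to revisit the proof of Proposition~\ref{prop.Product} and enforce $Z$-equivariance at each step. The construction there assembles $\tilde\varphi$ from shears of the form $(g_1, g_2) \mapsto (g_1, g_2 f_0(g_1))$ and $(g_1, g_2) \mapsto (g_1 f_1(g_2), g_2)$, together with applications of the shearing tool (Proposition~\ref{prop.ConstructionOfAuto}), all governed by morphisms into $G_1$ or $G_2$ (or their one-dimensional unipotent subgroups). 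Since $Z$ is central in both $G_1$ and $G_2$, a direct calculation shows that each such operation descends to $G$ exactly when the defining morphism is invariant under right translation by $Z$. The genericity conditions (i) and (ii) in the proof of Proposition~\ref{prop.Product} are finitely many non-trivial affine linear conditions on the shearing morphisms; the crucial verification is that they continue to cut out proper subspaces when restricted to the $Z$-invariant subspaces of morphisms. A case analysis based on whether the relevant pairs of points have $G_i$-components in the same $Z$-orbit (in which case the corresponding condition is automatically satisfied on the invariant subspace) or in different $Z$-orbits (in which case it remains a non-trivial linear condition there) confirms this, so generic $Z$-invariant choices exist at each shear step. This yields a $Z$-equivariant $\tilde\varphi$ descending to the required $\varphi \in \Aut(G)$ with $\varphi \circ f_1 = f_2$.
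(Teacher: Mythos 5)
Your route is genuinely different from the paper's: you lift along the isogeny $\mu\colon G_1\times G_2\to G$, apply Proposition~\ref{prop.Product} on the honest product, and try to descend, whereas the paper never lifts. It treats the Lie type $\Lie{sl}_2\times\Lie{sl}_2$ separately (there $G$ is always isomorphic as a variety to a true direct product, so Proposition~\ref{prop.Product} applies directly), and otherwise writes $G\cong(K\times H)/Z$ with $H$ simple of rank at least two, so that $\dim\UUU_H\geq 4$, and constructs every automorphism directly on the quotient (Lemma~\ref{lem.almostproducts}, via Remark~\ref{rem.OneDimProj}, Lemma~\ref{lem.twosubgroups} and Lemma~\ref{lem.subgroup}); no equivariance or descent question ever arises. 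Unfortunately, your descent step --- which you correctly identify as the main obstacle --- contains a genuine gap.

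The problem sits exactly in the case analysis you sketch for condition~$(i)$ of the proof of Proposition~\ref{prop.Product}. Suppose the lifted curve $X=\tilde f_1(\AA)\subseteq G_1\times G_2$ contains two points $(k,h)$ and $(k',h')=(kz_0,h)$ with $e\neq z_0\in Z$. These are distinct points of $X$ (their images in $G$ are $kh$ and $khz_0$), they lie in the same fiber of $\pi_H$ and hence of $\rho$, and for this pair condition~$(i)$ reads $hf(k)\neq hf(kz_0)$, i.e.\ $f(k)\neq f(kz_0)$. For a $Z$-invariant $f$ this is \emph{automatically violated}, not automatically satisfied as your dichotomy claims (your dichotomy only looks at whether the $G_1$-components lie in the same $Z$-orbit and misses the sub-case $h=h'$). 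Worse, every tool in the proof of Proposition~\ref{prop.Product}, once forced to be $Z$-equivariant, preserves this configuration: a $Z$-invariant shear sends the pair to $(k,f(k)h)$ and $(kz_0,f(kz_0)h)=(kz_0,f(k)h)$, and the automorphisms acting on a single factor, or those from Proposition~\ref{prop.ConstructionOfAuto} governed by $Z$-invariant maps, do likewise. Hence $\pi_H$ can never be made injective on $X$ by $Z$-equivariant moves of this kind, and the curve can never be pushed into $H$. Such pairs occur precisely when $\pi_H$ fails to separate two points of $X$ whose $G_1$-components differ by a non-trivial element of $Z$; since condition~$(i)$ exists exactly to repair the failures of injectivity of $\pi_H|_X$, and nothing in the lifting construction controls which $Z$-coset relations hold among the $G_1$-components of those failure points, the case cannot be dismissed without a substantive additional argument. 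This is precisely the difficulty that the paper's Lemma~\ref{lem.almostproducts} is designed to circumvent by working in the quotient from the start.
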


For the proof we need three lemmata, which we also use later on.

\begin{lemma}
	\label{lem.twosubgroups}
	Let $G$ be a connected algebraic group
	and let $K$, $H$ be closed {connected} subgroups such that
	$KH$ is closed in $G$ and $K \backslash G$ is quasi-affine.
	If $X \subseteq K H$ is a closed curve that
	is isomorphic to $\AA$
	and if the canonical projection $G \to K \backslash G$ restricts to
	an embedding on $X$,
	then there exists an automorphism $\psi$ of $G$ with
	 $\psi(X) \subseteq H$.
\end{lemma}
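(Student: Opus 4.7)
The plan is to construct a closed curve $X' \subseteq H$ that is a second section of $\pi \colon G \to K\backslash G$ over $\pi(X)$, and then to move $X$ to $X'$ via the shearing-tool (specifically, the right-coset analog of Proposition~\ref{prop.ConstructionOfAuto}), which will produce the desired automorphism $\psi$ of $G$ with $\psi(X) = X' \subseteq H$.

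To set up the construction of $X'$, I would first observe that $\pi(X) \subseteq \pi(H)$ since $X \subseteq KH$, that $\pi(H)$ is closed in $K\backslash G$ (because $\pi^{-1}(\pi(H)) = KH$ is closed in $G$ by hypothesis), and that $\pi(X)$ is closed in $K\backslash G$ (because $\pi|_X$ is a closed embedding). Moreover, the fibers of $\pi|_H \colon H \to \pi(H)$ are precisely the left $(K\cap H)$-cosets in $H$, so this restriction is canonically identified with the principal $(K \cap H)$-bundle $H \to (K \cap H)\backslash H$ recalled in Section~\ref{sec.notions}.

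The decisive step is to produce a section $s \colon \pi(X) \to H$ of $\pi|_H$. Pulling the principal $(K \cap H)$-bundle $H \to \pi(H)$ back along the closed inclusion $\pi(X) \hookrightarrow \pi(H)$ gives a principal $(K \cap H)$-bundle over $\pi(X) \cong \AA$. By Appendix~\ref{sec.PrincipalBundlesOverA1}, every principal bundle over the affine line under an affine algebraic group is trivial, regardless of connectedness. Hence this pullback admits a section $s$, and the image $X' := s(\pi(X)) \subseteq H$ is a closed curve mapped isomorphically onto $\pi(X)$ by $\pi$, i.e.~a second section of $\pi^{-1}(\pi(X)) \to \pi(X)$.

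With both $X$ and $X'$ being sections of $\pi^{-1}(\pi(X)) \to \pi(X)$, $X \cong \AA$, and $K\backslash G$ quasi-affine by hypothesis, the right-coset version of Proposition~\ref{prop.ConstructionOfAuto} directly yields an automorphism $\psi$ of $G$ preserving $\pi$ with $\psi(X) = X' \subseteq H$. I expect the only real obstacle to be the existence of the section $s$, which hinges entirely on the triviality result for principal bundles over $\AA$ from Appendix~\ref{sec.PrincipalBundlesOverA1} applied to the potentially non-connected group $K \cap H$; the remainder is a direct application of the shearing framework already developed in Section~\ref{sec.Auto}.
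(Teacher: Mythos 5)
Your argument is correct and follows essentially the same route as the paper: both proofs reduce to trivializing a principal $(K\cap H)$-bundle over a copy of $\AA$ (you pull back $H \to (K\cap H)\backslash H$ over $\pi(X)$, while the paper pulls back the multiplication map $K\times H \to KH$ over $X$) and then apply the shearing construction of Section~\ref{sec.Auto} to the quotient $G \to K\backslash G$. The one point you gloss over is the identification of $\pi(H)$ with $(K\cap H)\backslash H$ as varieties --- a fiber count alone does not give this; like the paper's isomorphism $K\times^{K\cap H}H\cong KH$, it needs smoothness of the closed subvariety $KH$ together with Zariski's Main Theorem.
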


\begin{proof}
	Let $K \times^{K \cap H} H \to K / K \cap H$ be the bundle
	associated to the principal $K \cap H$-bundle $K \to K / K \cap H$
	with fiber $H$; compare Appendix~\ref{sec.PrincipalBundlesOverA1}.
	The natural morphism
	$K \times^{K \cap H} H \to KH$ is bijective and
	since $KH$ is a smooth irreducible variety (note that $KH$ is closed in $G$),
	it follows from Zariski's Main Theorem
	\cite[Corollaire~4.4.9]{Gr1961Elements-de-geomet-III_1}
	that $K \times^{K \cap H} H \to KH$ is an isomorphism.
	Thus, multiplication $m \colon K \times H \to KH$
	is a principal $K \cap H$-bundle;
	see \cite[Proposition~4]{Se1958Espaces-fibres-}.
	
	Since $\AA \cong X \subseteq K H$, there exists a section
	$Y \subseteq K \times H$ over $X$ by Theorem~\ref{thm.trivial_over_A1}.	
	Denote by $\pr_K \colon K \times H \to K$ the canonical projection to $K$
	and by $\rho \colon G \to K \backslash G$ the quotient morphism.
	By assumption, $\rho \circ m |_Y \colon Y \to \rho(X)$
	is an isomorphism. Since $\rho(X) \cong \AA$ and since $K \backslash G$
	is quasi-affine,
	\[
		\rho(X) \to K \, , \quad v \mapsto
		\left(\pr_K \circ (\rho \circ m |_Y)^{-1}(v) \right)^{-1}
	\]
	extends to a morphism $d \colon K \backslash G \to K$. Let
	$\psi_{d}$ be
	the automorphism of $G$ constructed in Section~\ref{sec.Auto}.
	One can easily see that $\psi_{d}(X) \subseteq H$.
\end{proof}

	\begin{lemma}
		\label{lem.subgroup}
		Let $G$ be an algebraic group with $G = G^u$ and
		let $K$ be a closed proper subgroup of $G$.
		Assume that $\UUU_G$ has dimension
		at least four. If $X \subseteq K$ is a closed curve that is
		isomorphic to $\AA$, then there exists an automorphism
		$\varphi$ of $G$ such
		that $\varphi(X)$ is a unipotent subgroup of $G$.
	\end{lemma}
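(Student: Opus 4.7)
The plan is to combine the generic projection result Lemma~\ref{lem.OneDimProj} with the shearing tool Proposition~\ref{prop.ConstructionOfAuto}, with the ultimate aim of invoking Lemma~\ref{lem.twosubgroups} to move $X$ into a one-dimensional unipotent subgroup of $G$.

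First, since $X \cong \AA$ has a single smooth point at infinity and $\dim \UUU_G \geq 4$, Lemma~\ref{lem.OneDimProj} provides a one-dimensional unipotent subgroup $U \subseteq G$ such that $\pi \colon G \to G/U$ restricts to a closed embedding on $X$. Because $K$ is proper and $G = G^u$, the set $\UUU_G$ is not contained in $K$ (otherwise $K$ would contain a generating set of $G$), so by restricting the generic choice further we may also arrange $U \not\subseteq K$, and in fact $U \cap K = \{e\}$. After a left translation by $x_0^{-1}$ for some $x_0 \in X$ (an automorphism of the underlying variety of $G$) we may assume $e \in X$, so that $U$ itself is contained in the closed surface $\pi^{-1}(\pi(X)) = X \cdot U$, which is a trivial principal $U$-bundle over $\pi(X) \cong \AA$.

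The goal is then to apply Lemma~\ref{lem.twosubgroups} with $K' = U$ and $H$ a one-dimensional unipotent subgroup of $G$ such that $U \cdot H$ is closed in $G$ and $X \subseteq U \cdot H$. The conclusion of Lemma~\ref{lem.twosubgroups} will then furnish an automorphism $\varphi$ of $G$ with $\varphi(X) \subseteq H$, and since $H$ is one-dimensional while $\varphi(X) \cong \AA$ is a closed curve, we must have $\varphi(X) = H$. The quasi-affineness of $U \backslash G$ follows from $U$ being unipotent, and the hypothesis that $G \to U \backslash G$ restricts to a closed embedding on $X$ is obtained by invoking the right-coset analog of Lemma~\ref{lem.OneDimProj} and choosing $U$ generic for both projections simultaneously. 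The delicate input is the containment $X \subseteq U \cdot H$, which is equivalent to demanding that $H$ be another section of the principal $U$-bundle $X \cdot U \to \pi(X)$, i.e., that $\pi(H) = \pi(X)$ as curves in $G/U$.

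The main obstacle, which I expect to constitute the technical core of the argument, is the production of such a one-dimensional unipotent subgroup $H$. To overcome it, one would combine Proposition~\ref{prop.firstExample} (which asserts that $\Aut(G)$ acts transitively on one-dimensional unipotent subgroups of $G$) with a preliminary application of Proposition~\ref{prop.ConstructionOfAuto}: concretely, one starts from some one-dimensional unipotent $H$ commuting with $U$ so that $U \cdot H$ is a closed two-dimensional abelian unipotent subgroup of $G$, and then uses a shearing automorphism to move $X$ into $U \cdot H$. The extra room provided by $K$ being a proper subgroup of $G$, together with the abundance of unipotents guaranteed by $\dim \UUU_G \geq 4$, is what makes this alignment of $\pi(H)$ with $\pi(X)$ possible; thereafter Lemma~\ref{lem.twosubgroups} applies and the proof concludes.
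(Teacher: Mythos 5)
There is a genuine gap, and it sits exactly where you flag ``the technical core'': the production of a one-dimensional unipotent subgroup $H$ with $X\subseteq U\cdot H$. After your first paragraph you have essentially discarded the hypothesis $X\subseteq K$ and are trying to apply Lemma~\ref{lem.twosubgroups} to a pair of \emph{one-dimensional} subgroups, which forces you to squeeze the arbitrary curve $X$ into the two-dimensional subvariety $UH$. None of the tools you invoke can do this. A shearing automorphism from Proposition~\ref{prop.ConstructionOfAuto} relative to $\pi\colon G\to G/U$ moves $X$ only to another section of $\pi^{-1}(\pi(X))\to\pi(X)$, so it leaves $\pi(X)$ unchanged; but if $U$ and $H$ commute then $\pi(UH)=\pi(H)$ is the image of a fixed one-parameter subgroup, so $X\subseteq UH$ would force $\pi(X)=\pi(H)$ --- an equality of an \emph{arbitrary} embedded affine line in $G/U$ with a very special one, which no shear can arrange. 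Proposition~\ref{prop.firstExample} (transitivity of $\Aut(G)$ on one-dimensional unipotent subgroups) does not help either, since it says nothing about where $\pi(X)$ sits. Indeed, if one could move any $X\cong\AA$ into a two-dimensional abelian unipotent subgroup by such elementary means, the bulk of Sections~7--9 of the paper would be superfluous.

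The fix is to keep $K$ in the picture: the proper subgroup $K$ is precisely what supplies a ``large'' factor for Lemma~\ref{lem.twosubgroups}. The paper first moves $X$ into $K^u$ (as $X\cong\AA$, it lies in a fiber of $K\to K/K^u$, whose components are tori, so a left translation by an element of $K$ suffices), then chooses --- as you do --- a one-dimensional unipotent $U$ with $U\cap K^u=\{e\}$ such that $\pi\colon G\to G/U$ embeds $X$, and applies Lemma~\ref{lem.twosubgroups} to the pair $(K^u,U)$. With this choice the containment $X\subseteq K^uU$ is automatic from $X\subseteq K^u$, and the only thing left to arrange is that $G\to K^u\backslash G$ restricts to an embedding on $X$; this is done by a single shear $x\mapsto x f(\pi(x))$ with $f\colon G/U\to U$ extending an isomorphism $\pi(X)\cong U$, after which the composition with the projection $K^uU\to U$ is an isomorphism on the sheared curve. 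Your opening step (choosing $U$ generically with $U\cap K=\{e\}$ via Lemma~\ref{lem.OneDimProj}) is sound and matches the paper; the divergence afterwards is where the argument breaks down.
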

	
	\begin{proof}
	Note that the connected components of $K/K^u$ are tori.
	Since $X$ is the affine line, it lies in some fiber of $K \to K/ K^u$.
	Hence, after multiplying from the left with a suitable element of $K$,
	we can assume that $X \subseteq K^u$.
	Since $K^u$ does not contain all unipotent elements of $G$
	(otherwise $K = G$, since $G = G^u$),
	by Lemma~\ref{lem.OneDimProj} there exists a one-dimensional unipotent
	subgroup $U \subseteq G$ such that $U \cap K^u = \{e \}$
	and $\pi \colon G \to G/U$ induces an embedding on $X$.
	
	Choose an isomorphism $\pi(X) \cong U$
	and let $f \colon G/U \to U$ be an extension of it.
	The automorphism $\varphi_f$ of $G$ (see Section~\ref{sec.Auto})
	leaves $K^u U$ invariant.
	Since $U \cap K^u = \{Êe \}$, there is a canonical projection
	$K^u U \to U$. Since $X \subseteq K^u$,
	the composition
	\[
		X \stackrel{\varphi_f}{\longrightarrow} \varphi_f(X)
		\subseteq K^u U \longrightarrow U
	\]
	is an isomorphism. In particular, we can assume that $X \subseteq K^u U$
	and that $\rho \colon G \to K^u \backslash G$ induces an
	embedding on $X$.
	Now, we can apply Lemma~\ref{lem.twosubgroups} to the group $G$
	and the closed {connected} subgroups
	$K^u$ and $U$ to get an automorphism
	$\varphi$ of $G$ such that $\varphi(X) = U$.
%
\end{proof}

\begin{lemma}
	\label{lem.almostproducts}
	Let $K$, $H$ be non-trivial connected algebraic groups
	with $K = K^u$, $H = H^u$ and let $Z \subseteq K \times H$
	be a finite central subgroup. Assume that
	$\dim \mathcal{U}_H \geq 4$. If $X \subseteq (K \times H) / Z$
	is a closed curve that is isomorphic to $\AA$,
	then there exists an automorphism $\varphi$ of $(K \times H) / Z$
	such that $\varphi(X)$ is a unipotent subgroup of $(K \times H) / Z$.
\end{lemma}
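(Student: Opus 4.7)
The plan is to reduce the problem to Lemma~\ref{lem.subgroup}, which moves any closed affine line lying inside a proper closed subgroup of an algebraic group $G$ with $G = G^u$ and $\dim \UUU_G \geq 4$ into a one-dimensional unipotent subgroup via an automorphism of $G$. To that end, let $q \colon K \times H \to G := (K \times H)/Z$ be the quotient and set $\hat K = q(K \times \{e\})$ and $\hat H = q(\{e\} \times H)$. These are closed, connected, normal subgroups of $G$ that commute elementwise, satisfy $\hat K \hat H = G$, and intersect in the finite set $\hat K \cap \hat H$ (a quotient of $Z$). Because $K = K^u$ and $H = H^u$ we obtain $G = G^u$; because $\hat H$ is a finite central quotient of $H$ we have $\dim \UUU_G \geq \dim \UUU_{\hat H} = \dim \UUU_H \geq 4$; and neither $\hat K$ nor $\hat H$ is all of $G$ since the other factor is non-trivial of positive dimension and their intersection is only finite. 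Hence, once $X$ is moved into $\hat H$ by an automorphism of $G$, Lemma~\ref{lem.subgroup} produces the desired automorphism.

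To move $X$ into $\hat H$, the plan is to invoke Lemma~\ref{lem.twosubgroups} with $\hat K, \hat H$ in the roles of $K, H$. The conditions that $\hat K \hat H = G$ be closed and that $\hat K \backslash G = G/\hat K$ be quasi-affine (in fact affine, as $\hat K$ is normal with $\hat K = \hat K^u$ and hence characterless) are immediate. The substantive condition is that $\rho \colon G \to G/\hat K$ restrict to a closed embedding on $X$. If $\rho|_X$ is constant, then $X$ lies in a single $\hat K$-coset, so after left-translation $X \subseteq \hat K \subsetneq G$ and Lemma~\ref{lem.subgroup} applies directly. Otherwise $\rho(X)$ is an irreducible curve in $G/\hat K$, and the plan is to mimic the proof of Proposition~\ref{prop.Product}. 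The relevant automorphisms of $G$ are the shearings $\varphi_f(g) = g f(\pi(g))$ with $\pi \colon G \to G/\hat H$ and $f \colon G/\hat H \to V$ for $V \subseteq \hat H$ a one-dimensional unipotent subgroup, together with their right-sided analogues $\psi_d$; these play in $G$ the roles that $(k, h) \mapsto (k, f(k)h)$ and $(k, h) \mapsto (k d(h), h)$ play in a literal product. Noting that $G/\hat K$ is connected, equals its own $(\cdot)^u$, and satisfies $\dim G/\hat K \geq \dim \UUU_{G/\hat K} \geq 4$, Corollary~\ref{cor.Birational_projection} applies; combining its output with a suitable shearing in $\hat H$ yields (in analogy with Lemma~\ref{lemma:biratproj}) an automorphism of $G$ after which $\rho|_X$ is birational. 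A further shearing, as in the remainder of the proof of Proposition~\ref{prop.Product}, separates the finitely many non-injectivity points and non-immersivity directions, making $\rho|_X$ a closed embedding.

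The main technical obstacle is the faithful adaptation of Lemma~\ref{lemma:biratproj} and of the middle of Proposition~\ref{prop.Product}'s proof to the near-product $G = (K \times H)/Z$, where the ``factors'' $G/\hat H$ and $G/\hat K$ are finite central quotients of $K$ and $H$ rather than $K$ and $H$ themselves, and where the shearings used are the intrinsic ones provided by Section~\ref{sec.Auto}. This is essentially bookkeeping: the finitely many affine-linear pointwise and tangent-space conditions used in the product-case proofs remain non-trivial affine-linear conditions on the infinite-dimensional vector spaces of morphisms $G/\hat H \to V$ and $G/\hat K \to W$, so the generic-existence arguments carry over unchanged. Once the embedding condition has been arranged, Lemma~\ref{lem.twosubgroups} moves $X$ into $\hat H$ and Lemma~\ref{lem.subgroup} sends $X$ to a unipotent subgroup of $G$, finishing the proof.
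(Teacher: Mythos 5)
Your overall skeleton agrees with the paper's: identify $\hat K=K'$ and $\hat H=H'$, arrange that $G\to \hat K\backslash G$ restricts to an embedding on $X$, move $X$ into $\hat H$ by Lemma~\ref{lem.twosubgroups}, and finish with Lemma~\ref{lem.subgroup}. The setup (closedness of $\hat K\hat H=G$, affineness of $\hat K\backslash G$, $G=G^u$, $\dim\UUU_G\geq 4$, properness of $\hat H$) is all correct. The problem is the middle step, where you diverge from the paper and try to force $\rho\colon G\to G/\hat K$ to be an embedding on $X$ by transplanting the shearing argument of Proposition~\ref{prop.Product} and Lemma~\ref{lemma:biratproj}. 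This is not ``essentially bookkeeping'': the available shearings $\varphi_f(g)=g\,f(\pi(g))$ with $\pi\colon G\to G/\hat H$ and $f\colon G/\hat H\to V\subseteq\hat H$ satisfy $\rho(\varphi_f(g))=\rho(g)\,\rho(f(\pi(g)))$, so for two distinct points $g,g'\in X$ with $\pi(g)=\pi(g')$ \emph{and} $\rho(g)=\rho(g')$ --- equivalently $g'=gc$ for some $e\neq c\in\hat K\cap\hat H$ --- the separating condition is not a non-trivial affine-linear condition on $f$; it is unsatisfiable, since $f(\pi(g))=f(\pi(g'))$ forces $\rho(\varphi_f(g))=\rho(\varphi_f(g'))$. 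The right-sided shearings $\psi_d$ do not change $\rho$ at all and hence do not help either. Unless $Z=(Z\cap K)\times(Z\cap H)$ (in which case $G$ is an honest product), the finite central group $\hat K\cap\hat H$ is non-trivial, and nothing in your reductions excludes such pairs from lying on $X$; they would survive as non-injectivity points of $\rho|_X$ that no shearing along $\pi$ or $\rho$ can repair. A secondary, related issue: Corollary~\ref{cor.Birational_projection} applied to $G/\hat K$ produces an automorphism of $G/\hat K$ obtained from flexibility, and in the near-product situation you would still have to explain why it lifts to an automorphism of $G$ (in Proposition~\ref{prop.Product} this is free because automorphisms of a factor extend to the product).

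This inseparability phenomenon is exactly what the paper's proof is built to avoid. Instead of quotienting by all of $\hat H$ at once, the paper first uses Remark~\ref{rem.OneDimProj} with $W=\UUU_{\hat H}$ (which has dimension $\geq 4$) to find a \emph{one-dimensional} unipotent $U\subseteq\hat H$ such that $G\to G/U$ already restricts to an embedding on $X$; then $X$ is a section of the trivial principal $U$-bundle over $\pi_U(X)\cong\AA$, and one can slide $X$ along the $U$-fibers (Proposition~\ref{prop.ConstructionOfAuto}) onto a section $Y$ whose image under $G\to H/\Center(H)$ is embedded; since that map factors through $G\to \hat K\backslash G$, the desired embedding follows. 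To salvage your route you would need an additional argument guaranteeing $X\cap Xc=\emptyset$ for all $e\neq c\in\hat K\cap\hat H$ (such pairs are at least finite in number, since a non-trivial finite-order automorphism of $\AA$ cannot act freely, so $Xc\cap X$ is finite), but producing an automorphism of $G$ that removes them is a genuine missing step, not a routine adaptation.
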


\begin{proof}
	Denote $K' = K / K \cap Z$, $H' = H \cap Z \backslash H$ and
	$G' = K \times H / Z$.
	We claim that the projection
	\[
		p \colon G' \to  K' \backslash G'
	\]
	restricts to an embedding on $X$
	after a suitable automorphism of $G'$.
	This can be seen as follows. Let $U \subseteq H$ be a closed
	one-dimensional unipotent subgroup such that
	$\pi \colon G' \to G' / U$ restricts to an embedding on $X$; see
	Remark~\ref{rem.OneDimProj}.
	Thus we have a commutative diagram
	of principal $U$-bundles and $\textrm{pr}_1$ is
	$U$-equivariant:
	\[
		\xymatrix{
			G' \ar[d]^-{\pi} \ar[r]^-{\textrm{pr}_1} & H / \Center(H) \ar[d] \\
			G'/U \ar[r]^-{\textrm{pr}_2} & (H/\Center(H))/U \, ;
		}
	\]
	where $\Center(H)$ denotes the center of $H$.
	This diagram restricts to a $U$-equivariant
	morphism of principal $U$-bundles
	\[
		\xymatrix{
			\pi^{-1}(\pi(X)) \ar[d] \ar[r] & \textrm{pr}_1(\pi^{-1}(\pi(X))) \ar[d] \\
			\pi(X) \ar[r] & \textrm{pr}_2(\pi(X)) \, .
		}		
	\]
	Since ${\C^+}$ is a special group in the sense of
	Serre~\cite[\S4]{Se1958Espaces-fibres-}, both
	principal bundles are locally trivial.
	Since the base varieties $\pi(X)$ and $\pr_2(\pi(X))$ are affine, both
	principal bundles are trivial.
	Since $\pi(X) \cong \AA$, there exists a section
	$Y \subseteq \pi^{-1}(\pi(X))$ over $\pi(X)$ that is mapped
	isomorphically onto its image via $\textrm{pr}_1$.
	By {Proposition~\ref{prop.ConstructionOfAuto}
	there exists an automorphism} of $G$ which moves
	$X$ into $Y$ along the fibers of $\pi$ and thus
	we can assume that $\pr_1$ restricts
	to an embedding on $X$.
	Since $\pr_1 \colon G' \to H / \Center(H)$
	factors through the projection $p$,
	this proves the claim.
	
	Note that $K'$ is normal in $G'$ and therefore $K' \backslash G'$
	is an algebraic group and in particular affine.
	Since $p \colon G' \to K' \backslash G'$
	restricts to an embedding on $X$,
	we can apply Lemma~\ref{lem.twosubgroups}
	to the algebraic group $G'$ and the closed {connected}
	subgroups $K'$, $H'$
	and hence assume that $X \subseteq H'$. Since $K = K^u$
	and $H = H^u$ it follows that $G' = (G')^{u}$. Hence we can
	apply Lemma~\ref{lem.subgroup} to $G'$ and the proper subgroup $H'$
	to get an automorphism $\varphi$ of $G'$ such that $\varphi(X)$
	is a unipotent subgroup of $G'$.
\end{proof}

\begin{proof}[Proof of Proposition~\ref{prop.non-semisimple}]	
Since $G$ is a semisimple algebraic group, there exist
simple algebraic groups $G_1, \ldots, G_n$ and an epimorphism
\[
	G_1 \times \cdots \times G_n \to G
\]
with finite kernel; see \cite[Theorem~27.5]{Hu1975Linear-algebraic-g}.
As $G_1 \times \cdots \times G_n$ is connected,
this kernel is central. By assumption,
$n \geq 2$. If the Lie type of $G$ is equal to $\Lie{sl}_2 \times \Lie{sl}_2$,
then $G$ is isomorphic (as a variety) to one of the groups
\[
	\SL_2 \times \SL_2 \, , \ \SL_2 \times \PSL_2 \ \textrm{or} \
	\PSL_2 \times \PSL_2 \, .
\]
Indeed, if we consider the quotients of $\SL_2 \times \SL_2$ by subgroups of the center
\[
	\Center(\SL_2 \times \SL_2)=\{(E,E),(E,-E),(-E,E),(-E,-E)\} \, ,
\]
we get
\[
	\frac{\SL_2 \times \SL_2}{\langle (E, E) \rangle} \cong \SL_2 \times \SL_2 \, ,
	\quad
	\frac{\SL_2 \times \SL_2}{\Center(\SL_2 \times \SL_2)} \cong
	\PSL_2 \times \PSL_2 \, ,
\]
and
\[
	\frac{\SL_2 \times \SL_2} {\langle(-E,-E)\rangle} \cong
	\frac{\SL_2 \times \SL_2}{\langle(E,-E)\rangle} \cong
	\frac{\SL_2\times\SL_2} {\langle(-E,E)\rangle} \cong
	\SL_2 \times \PSL_2;
\]
where the first isomorphism of the last line is induced by the automorphism
\[
	\SL_2 \times \SL_2\to\SL_2 \times \SL_2, \quad (A,B)\mapsto (AB,B) \, .
\]
By Proposition~\ref{prop.Product} all embeddings of $\AA$ into
one of these groups are equivalent.
Hence, we can assume
that the Lie type of $G$ is not equal to $\Lie{sl}_2 \times \Lie{sl}_2$.
Therefore one can find a semisimple algebraic group $K$ and a simple
algebraic group $H$ such that $G \cong (K \times H) / Z$
for a central finite subgroup $Z$
and the Lie algebra of $H$ is not isomorphic to $\Lie{sl}_2$.
Since $H$ is simple, the classification of simple
Lie algebras implies that $\rank H \geq 2$.
By Lemma~\ref{lem.enoughUnipotentsInAParabolic}, we have
$\dim \UUU_H \geq 4$.
If $X \subseteq (K \times H) / Z$ is a closed curve that is isomorphic to $\AA$,
then we can apply Lemma~\ref{lem.almostproducts}
to $K$, $H$ and the finite central subgroup $Z \subseteq K \times H$
to find an automorphism that maps $X$ into a unipotent subgroup.
Thus Proposition~\ref{prop.firstExample} implies the result.
\end{proof}

\begin{remark}
	Note that $\SL_2 \times \SL_2 / \langle (-E, -E) \rangle$ and
	$\SL_2 \times \PSL_2$ are not isomorphic as \emph{algebraic groups},
	since $(A, B) \mapsto (B, A)$ is an automorphism of the first algebraic group
	that is not inner; however, all automorphisms of the
	second algebraic group are inner, since (by a calculation)
	\[
		\Aut_{\textrm{alg.grp.}}(\SL_2 \times \PSL_2) \cong
		\Aut_{\textrm{alg.grp.}}(\SL_2) \times \Aut_{\textrm{alg.grp.}}(\PSL_2)
	\]
	and since all automorphisms of the algebraic groups
	$\SL_2$, $\PSL_2$ are inner;
	see \cite[Theorem~27.4]{Hu1975Linear-algebraic-g}.
\end{remark}

\section{Embeddings into simple groups}
\label{sec.Simple}

In this section, we {prove} the hardest part of Theorem~\ref{thm:mainthm}:

\begin{theorem}\label{thm:mainthmSimple}
	Let $G$ be a simple algebraic group of rank at least two.
	Then two embeddings of the affine line into
	$G$ are the same up to an automorphism of $G$.
\end{theorem}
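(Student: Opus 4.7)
The plan is to reduce the problem to the case already settled by Proposition~\ref{prop.firstExample} (all embeddings with one-dimensional unipotent image are equivalent) by moving an arbitrary affine line $X\subseteq G$ step by step into ever smaller distinguished subvarieties, using automorphisms produced from the Shearing-tool (Proposition~\ref{prop.ConstructionOfAuto}) and our generic projection result (Proposition~\ref{prop.UnipotProj}).

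First, I would fix a maximal parabolic subgroup $P\subseteq G$ and an opposite parabolic $P^-$; set $E\subseteq G$ to be the preimage under $G\to G/P$ of the unique Schubert curve, so that $E=KP$ for a certain non-trivial closed subgroup $K$ of $G$ (to be described via the root datum). I would use the properties announced in Proposition~\ref{prop.keyProperteOfPi}, namely that the quotient $\pi\colon G\to G/R_u(P^-)$ restricts on $E$ to a locally trivial $\CC$-bundle over $\pi(E)$ and that $\pi(E)$ is a big open subset of $G/R_u(P^-)$. Given an embedding $\AA\cong X\subseteq G$, the first move is to bring $X$ into $E$ by the three-step recipe from the introduction: (i) apply Proposition~\ref{prop.UnipotProj} to the unipotent group $U:=R_u(P^-)$ to obtain an automorphism of $G$ after which $\pi$ restricts to an embedding on $X$; (ii) use that $\pi(E)$ is big open in $G/R_u(P^-)$ and that $\pi$ is $G$-equivariant for left multiplication to push $X$ into $\pi^{-1}(\pi(E))$ by a left translation, without destroying (i); (iii) since $\pi|_E\colon E\to\pi(E)$ is a locally trivial $\CC$-bundle, trivialise it over the affine curve $\pi(X)\subseteq\pi(E)$, pick a section $X'\subseteq E$ over $\pi(X)$, and apply the Shearing-tool (Proposition~\ref{prop.ConstructionOfAuto}) to $R_u(P^-)$ to obtain an automorphism of $G$ sending $X$ to $X'\subseteq E$.

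Once $X\subseteq E=KP$, I would invoke Proposition~\ref{prop.key}: up to an automorphism of $G$, we may assume that the quotient $E\to K\backslash E$ restricts to an embedding on $X$. The product map $K\times P\to E$ is a principal $(K\cap P)$-bundle and $E$ is closed in $G$, so Lemma~\ref{lem.twosubgroups} applies with the closed connected subgroups $K$ and $P$ and yields an automorphism moving $X$ into $P$. Since $G$ is simple of rank $\geq 2$, Lemma~\ref{lem.enoughUnipotentsInAParabolic} gives $\dim\UUU_G\geq 4$ and $P$ is a proper closed subgroup of $G$, so Lemma~\ref{lem.subgroup} produces an automorphism of $G$ sending $X$ into a one-dimensional unipotent subgroup of $G$. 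Finally, Proposition~\ref{prop.firstExample} says that any two such embeddings are equivalent, finishing the proof.

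The serious obstruction is the step hidden in Proposition~\ref{prop.key}, i.e.~arranging that $E\to K\backslash E$ restricts to an embedding on an arbitrary affine line in $E$; everything else is essentially a combination of the previously developed tools, with the verification of Proposition~\ref{prop.keyProperteOfPi} (bigness of $\pi(E)$ and the $\CC$-bundle structure) as a secondary input that is group-theoretic and follows from the structure theory of parabolic subgroups recalled in Appendix~\ref{sec.Genonparabsubgroups}. The rank-$\geq 2$ hypothesis enters crucially in two places: in Proposition~\ref{prop.UnipotProj} (applied to $U=R_u(P^-)$), and in guaranteeing $\dim R_u(P^-)\geq 2$ so that Proposition~\ref{prop.key} has content; this is the reason $\SL_2$ and $\PSL_2$ are excluded.
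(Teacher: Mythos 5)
Your proposal is correct and follows essentially the same route as the paper: move $X$ into $E$ via Proposition~\ref{prop.UnipotProj}, a generic left translate into the big open set $\pi(E)$ (the paper cites Kleiman's transversality theorem here), and the shearing-tool applied to a section of the $\AA$-bundle $E\to\pi(E)$; then use Proposition~\ref{prop.key} and Lemma~\ref{lem.twosubgroups} with $E=KP$ to land in $P$, and finish with Lemma~\ref{lem.subgroup} and Proposition~\ref{prop.firstExample}. You also correctly identify Proposition~\ref{prop.key} as the technical heart, which is exactly where the paper concentrates its effort.
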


We remark that Theorem~\ref{thm:mainthmSimple}, Theorem~\ref{thm.reduction_to_semisimple} and Proposition~\ref{prop.non-semisimple}
imply Theorem~\ref{thm:mainthm}. We do this in detail:

\begin{proof}[Proof of Theorem~\ref{thm:mainthm}]
By Lemma~\ref{lem.Product}, $G$ is isomorphic
to $G^u\times (\C^*)^n$ as a variety,
where $n$ is some non-negative integer. By Lemma~\ref{lem.reduction_to_characterless}, all embeddings of $\C$ into $G$ are equivalent if and only if all embeddings of $\CC$ into $G^u$ are equivalent. Hence, it suffices to consider embeddings of $\CC$ into $G^u$.
If $G^u$ is not semisimple and not isomorphic as a variety to $\C^3$,
then all embeddings of $\C$ into $G^u$ are equivalent by Theorem~\ref{thm.reduction_to_semisimple}. If $G^u$ is semisimple but not simple, then all embeddings of $\C$ into $G^u$ are equivalent by Proposition~\ref{prop.non-semisimple}. Finally, if $G^u$ is simple and different from $\SL_2$ and $\PSL_2$, then $G^u$ has rank at least two; thus all
embeddings of $\C$ into $G^u$ are equivalent by Theorem~\ref{thm:mainthmSimple}.
\end{proof}

\subsection{Outline of the proof of Theorem~\ref{thm:mainthmSimple}}
In the light of Proposition~\ref{prop.firstExample}, it is enough to prove that any
closed curve $X \subseteq G$ which is isomorphic to $\AA$
can be moved into a one-dimensional unipotent subgroup of $G$ via
an automorphism of $G$.
In a first step we move our $X$ into a naturally defined subvariety $E$ (see Section~\ref{sec.MovingIntoE})
and in a second step we move it into a proper
subgroup (see Section~\ref{sec.MovingIntoProperSubgroup}).
By Lemma~\ref{lem.subgroup} we are then able to
move $X$ into a one-dimensional unipotent subgroup of $G$,
which then finishes the proof of Theorem~\ref{thm:mainthmSimple}.

The subvariety $E$ is defined using classical theory of algebraic groups. The necessary notion is set up in the next subsection.

\subsection{Notation and basic facts}
\label{sec.EmbeddingsSimpleNotation}
Let us fix the following notation for the whole section. By
$G$ we denote a simple algebraic group,
by $B \subseteq G$ a fixed Borel subgroup
and by $T \subseteq B$ a fixed maximal torus. Let $\Phi$ be the irreducible
roots system of $G$ with respect to $T$. Moreover, we denote by
$W$ the Weyl group with respect to $T$ and we denote by $\Delta$
the base of $\Phi$ with respect to $B$.
We denote by $w_0$ the unique longest word in $W$ with respect to $\Delta$
and by $B^-$ the opposite Borel subgroup of $B$ that contains $T$, i.e.~$B^- = w_0 B w_0$.

We fix a maximal parabolic subgroup $P$ that contains $B$,
i.e.~we fix a simple root $\alpha \in \Delta$ such that
$P = B W_{I} B$ where $I = \Delta \setminus \{ \alpha\}$
and $W_{I}$ denotes the subgroup in $W$ generated by the
reflections corresponding to the roots in $I$. We denote the reflection
corresponding to $\alpha$ by $s_\alpha$. Furthermore, we denote by
$P^-$ the unique opposite parabolic subgroup to
$P$ {with respect to T; see Appendix~\ref{sec.OppositeParabolicSubgroup}.
Again by Appendix~\ref{sec.OppositeParabolicSubgroup},
$P^- = B^- W_I B^-$, $PP^-$ is open in $G$
and $PP^- = R_u(P)P^- = P R_u(P^-)$.}

{
The quotient of $G$ by the unipotent radical of $P^-$ will play a crucial role for use.
We denote this quotient throughout this section by
\[
	\pi \colon G \to G / R_u(P^-) \, .
\]
Since $R_u(P^-)$ is a special group in the sense of Serre
\cite[\S4]{Se1958Espaces-fibres-},
$\pi$ is a locally trivial principal $R_u(P^-)$-bundle.}

Since $P$ is a maximal parabolic subgroup of $G$, there exists
a unique Schubert curve in $G/P$. We denote
by $E$ the inverse image of this Schubert
curve under the natural projection $G \to G/P$. Note that $E$ is the union of
the two disjoint subsets $B s_\alpha P$ and $P$ of $G$.

\subsection{\texorpdfstring{The restriction of $\pi$ to $E$}
{The restriction of pi to E}}

Recall that $E$ denotes the inverse image of the unique
Schubert curve in $G/P$ and $\pi \colon G \to G / R_u(P^-)$ {denotes}
the canonical projection.
The following result describes the restriction of $\pi$ to $E$.
It is the key ingredient that enables us to move our curve into $E$.

\begin{proposition}
	\label{prop.keyProperteOfPi}
	The complement of $\pi(E)$ in $G/R_u(P^-)$ is
	closed and has codimension at least two in $G/R_u(P^-)$.
	Moreover, the restriction of $\pi$ to $E$ turns $E$
	into a locally trivial $\AA$-bundle over $\pi(E)$.
\end{proposition}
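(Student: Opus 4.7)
The plan is to establish the two assertions separately, using the Bruhat-theoretic structure of $G$ relative to $P$ and $P^-$. Denote by $\pi_{P^-}: G \to G/P^-$ the further quotient of $\pi$ by the right $L$-action on $G/R_u(P^-)$.

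\textbf{Codimension of the complement.} Since $E$ is right $P$-invariant (being the preimage of the Schubert curve under $G \to G/P$) and $L \subseteq P$, one has
\[
	\pi^{-1}(\pi(E)) \;=\; E \cdot R_u(P^-) \;=\; E P^- \;=\; PP^- \cup Bs_\alpha P P^-,
\]
both of whose summands are open in $G$ (the former is the big cell, and $Bs_\alpha PP^-$ is the union of $B$-translates of the open set $s_\alpha PP^-$). Hence $\pi(E)$ is open in $G/R_u(P^-)$. Via the locally trivial $L$-bundle $G/R_u(P^-) \to G/P^-$, the codimension question reduces to showing that $G/P^- \setminus \pi_{P^-}(E)$ has codimension at least two in $G/P^-$. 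Maximality of $P$ implies that $G/P^-$ admits a unique codimension-one Schubert subvariety $Y$, namely the complement of the big cell $\pi_{P^-}(P)$; hence $G/P^- \setminus \pi_{P^-}(E) \subseteq Y$, and it suffices that $\pi_{P^-}(Bs_\alpha P) \cap Y$ be dense in the irreducible $Y$. This is so because $\pi_{P^-}(Bs_\alpha P)$ is open in $G/P^-$ and contains the point $s_\alpha P^-/P^-$, which lies in $Y$: indeed, the Bruhat-type decomposition $G = \bigsqcup_{w \in W_I \backslash W/W_I} P^- w P$ puts $s_\alpha$ in a double coset distinct from $P^- P$.

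\textbf{Fibers and smoothness.} The fiber of $\pi|_E$ over $\pi(e)$ is $R_u(P^-) \cap E$; under the isomorphism of $R_u(P^-)$ with the open $B^-$-cell $R_u(P^-) P/P$ in $G/P$, this fiber becomes $\overline{Bs_\alpha P/P} \cap R_u(P^-) P/P$. The Schubert curve $\overline{Bs_\alpha P/P} \cong \PP^1$ has two $T$-fixed points, $P/P$ (in the big cell) and $s_\alpha P/P$ (not in the big cell, again by $s_\alpha \notin PP^-$), so the intersection is $\AA$, concretely equal to $U_{-\alpha}$. By translation, all fibers of $\pi|_E$ are isomorphic to $\AA$; and a transversality count showing $T_g E + T_g(g R_u(P^-)) = T_g G$ with one-dimensional intersection at every $g \in E$ yields that $\pi|_E$ is smooth of relative dimension one.

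\textbf{Local triviality.} I cover $\pi(E)$ by the open sets $\pi(gP)$, $g \in E$: each coset $gP$ lies in $E$ (as $E$ is right $P$-invariant) and $\pi|_{gP}$ maps $gP$ isomorphically onto $\pi(gP)$ (using $P \cap R_u(P^-) = \{e\}$), furnishing a local section of $\pi|_E$. Under the induced trivialization $gPP^- \cong gP \times R_u(P^-)$ of $\pi$, the subvariety $E \cap gPP^-$ is cut out at each $gp$ by the condition $gpu \cdot P \in \overline{Bs_\alpha P/P}$, an algebraic condition on $u$ defining a one-dimensional affine subvariety of $R_u(P^-)$ through the identity that varies algebraically with $p$. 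Exhibiting a non-vanishing regular direction section on $gP$ (as in the $\SL_3$ model, where this direction reads $(-p_{33}, p_{32})$) then gives a Zariski-local trivialization $gP \times \AA \cong E \cap gPP^-$. The main technical step, and chief obstacle, is constructing this direction section globally; it is tractable precisely because $P$ is maximal parabolic, so the Schubert curve admits a uniform linear description in the minimal projective embedding of $G/P$.
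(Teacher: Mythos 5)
Your first assertion (openness of $\pi(E)$ and codimension at least two of its complement) is essentially the paper's own argument, merely transported from $G$ to $G/P^-$: both versions rest on the maximality of $P$, which makes the complement of the big cell the unique irreducible Schubert divisor, met in a dense open subset by the open set $Bs_\alpha PP^-$. That part is sound.

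The fiber computation, however, contains a genuine gap. Your identification of the fiber over $\pi(e)$ with $U_{-\alpha}$ is correct, but the step ``by translation, all fibers of $\pi|_E$ are isomorphic to $\AA$'' does not work: $E$ is only left $B$-invariant (and right $P$-invariant), not left $G$-invariant, so left translation by $g^{-1}$ carries the fiber $E\cap gR_u(P^-)$ to $g^{-1}E\cap R_u(P^-)$, which is not $E\cap R_u(P^-)$ for general $g\in E$. The subgroup of elements stabilizing $E$ on the left is a proper parabolic whose orbits on $\pi(E)$ have dimension strictly less than $\dim \pi(E)=\dim P$, so translation reaches only a thin family of fibers. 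What is actually needed is that the Schubert curve $C\subseteq G/P$ meets every translate $gC^{\op}$ of the opposite Schubert variety in a single \emph{reduced} point whenever $C\not\subseteq gC^{\op}$; this is Lemma~\ref{lem.alternative} of the paper, whose proof is a substantive argument combining generic smoothness with $G$-equivariance, the reducedness of $C\cap C^{\op}$ (Ramanan--Ramanathan), its irreducibility and zero-dimensionality (Richardson), semicontinuity of fiber dimension, and Zariski's Main Theorem. Your ``transversality count at every $g\in E$'' asserts essentially this same nontrivial fact without proof. Finally, for local triviality you yourself flag the construction of a global ``direction section'' as the chief obstacle and do not carry it out; it is moreover unclear what such a section would mean for fibers that are closed curves in $R_u(P^-)$ isomorphic to $\AA$ but not a priori one-parameter subgroups. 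The paper sidesteps this entirely: once all fibers are known to be reduced affine lines, $\pi|_E$ is flat (equidimensional fibers between smooth varieties) and affine, and the theorem of Kambayashi--Wright (or Kraft--Russell) that flat affine families of affine lines are locally trivial $\AA$-bundles concludes. You should either invoke that result or supply the missing fiber and trivialization arguments.
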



\begin{proof}[Proof of Proposition~\ref{prop.keyProperteOfPi}]
	For the first statement it is enough to show that
	$\pi^{-1}(\pi(E)) = EP^-$ is open in $G$
	and that $G \setminus EP^-$ has codimension at least two in $G$.
	We have the following inclusion inside $G$
	\[
		BP^- \cup B s_\alpha P^- \subseteq PP^- \cup B s_\alpha PP^-
		= EP^- \, .
	\]
	Since $BP^- = PP^-$ is open in $G$, it follows that $EP^-$
	is open in $G$. More precisely, $G \setminus B P^-$
	is an irreducible closed hypersurface in $G$. This follows from the fact that
	$(G/P^-) \setminus Be$ is the translate by $w_0$ of
	the unique Schubert divisor in $G/P^-$ with respect to $B^-$.
	Since $BP^-$ and $B s_\alpha P^-$ are disjoint
	we have a proper inclusion $G \setminus EP^- \subsetneq G \setminus BP^-$.
	Thus $G \setminus EP^-$ has codimension at least two in $G$.
	
	For proving the second statement, we first show
	that all fibers of $\pi |_E \colon E \to \pi(E)$ are reduced and
	isomorphic to $\AA$. In fact, the schematic fiber over $\pi(g)$
	is the schematic intersection $E \cap g R_u(P^-)$ for all $g \in E$.
	Let $C$ be the unique Schubert curve in $G/P$, i.e.~$C$ is the closure of the $B$-orbit through $s_\alpha$.
	Since Schubert varieties are normal
	(see \cite[Theorem~3]{RaRa1985Projective-normali})
	and rational, it follows that $C \cong \PP^1$.
	For each {$g \in G$}, consider
	the following commutative diagram
	\[
		\xymatrix{
			E \cap gR_u(P^{-}) \ar[d] \ar[r] & E
			\ar[d] \ar[r] & C \ar[d] \\
			g R_u(P^-) \ar[r] & G \ar[r] & G/P \, .
		}
	\]
	Note that all squares are
	pull-back diagrams. Since $g R_u(P^{-}) \to G \to G/P$ is an
	open injective immersion,
	the same holds for $E \cap g R_u(P^-) \to E \to C$.
	Note that the image of $g R_u(P^-)$ inside $G/P$ is equal to
	$g B^- e \subseteq G/P$. Since $E$ is the inverse image
	of $C$ under $G \to G/P$ we get an isomorphism
	\[
		E \cap g R_u(P^-) \cong C \cap g B^- e \, .
	\]
	Let $C^{\op} \subseteq G/P$ be the opposite Schubert variety to $C$, i.e.~$C^{\op}$ is the closure of the $B^-$-orbit through $s_\alpha$ inside $G/P$.
	Thus we have a disjoint union
	\[
		C^{\op} \cup B^- e = G/P \, .
	\]
	It follows from Lemma~\ref{lem.alternative} that for all $g \in G$ the
	subset $C \cap gC^{\op}$ consists of exactly one point
	or $C \subseteq gC^{\op}$. Hence
	\[
		C \setminus (C \cap g C^{\op}) =
		C \cap g B^- e \, .
	\]
	is either isomorphic to $\AA$ or it is empty. This proves
	that all fibers of $\pi |_E \colon E \to \pi(E)$
	are reduced and isomorphic to $\AA$.
	
	Since $C$ is smooth and since $G \to G/P$ is a smooth morphism,
	it follows that
	$E$ is smooth; see
	\cite[Chp. II, Proposition~3.1]{GrRa2004Revetements-etales}.
	Moreover, $\pi(E)$ is smooth as an open subset
	of the smooth variety $G/R_u(P^-)$. Since all fibers of $\pi |_E$
	have the same dimension, the morphism $\pi |_E$ is faithfully flat.
	Since $\pi$ is affine as a locally trivial principal
	$R_u(P^-)$-bundle, the restriction $\pi |_E$ is  also affine.
	It follows from \cite{KaWr1985Flat-families-of-a} or
	\cite[Theorem~5.2]{KrRu2014Families-of-group-} that
	$\pi |_E$ is a locally trivial $\AA$-bundle.
\end{proof}

\begin{lemma}	
	\label{lem.alternative}
	Let $C$ be the unique Schubert curve in $G/P$ with respect to $B$
	and let $C^{\op}$ be the opposite Schubert variety to $C$.
	Then for all $g \in G$ either $gC \cap C^{\op}$ is a reduced
	point of $G/P$ or
	$gC \subseteq C^{\op}$.
\end{lemma}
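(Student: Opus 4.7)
My plan is to apply elementary intersection theory on the smooth projective variety $G/P$. As a preliminary, observe that $C = \overline{Bs_\alpha P/P}$ is a one-dimensional Schubert variety, and Schubert varieties are normal and rational, so $C \cong \PP^1$. By the hypothesis that $C^{\op} \cup B^- e = G/P$ is a disjoint union, the subvariety $C^{\op}$ is precisely the complement of the big open cell, hence an irreducible closed subset of codimension one; since $G/P$ is smooth, $C^{\op}$ is an effective Cartier divisor on $G/P$. Moreover, for any $g \in G$ the translate $gC$ is again a smooth rational curve isomorphic to $\PP^1$.

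The key step is the numerical identity $[C] \cdot [C^{\op}] = 1$, which I would establish via the standard Poincar\'e duality of Schubert cycles: in $H^\ast(G/P)$ the classes $[X_w]$ and $[X^w]$ indexed by $w \in W^I$ of complementary dimension are dual, so the top-degree product $[C] \cdot [C^{\op}]$ is the class of a point. Since $G$ is connected, the $G$-orbit of $[C]$ in the appropriate Chow variety interpolates between $[C]$ and $[gC]$, so these cycles are algebraically equivalent, and the intersection number with the divisor $C^{\op}$ is therefore $G$-invariant: $[gC] \cdot [C^{\op}] = 1$ for every $g \in G$.

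With this in hand, the conclusion is immediate. The scheme-theoretic intersection $gC \cap C^{\op}$ is a closed subscheme of the complete curve $gC \cong \PP^1$; either it equals all of $gC$, giving the second alternative $gC \subseteq C^{\op}$, or it is a proper, and hence zero-dimensional, closed subscheme. In the latter case the length of $gC \cap C^{\op}$ equals the intersection number $[gC] \cdot [C^{\op}] = 1$, so $gC \cap C^{\op}$ is a single scheme-theoretic point of multiplicity one, i.e., a reduced point of $G/P$. The only nontrivial ingredient is the Poincar\'e duality $[C] \cdot [C^{\op}] = 1$, which is a textbook fact of Schubert calculus; once this is in place the proof reduces to the formal statement that degree-one zero-dimensional subschemes are reduced points.
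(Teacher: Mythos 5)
Your argument is correct, but it takes a genuinely different route from the paper's. You work purely with intersection theory on $G/P$: since $P$ is maximal, $C^{\op}$ is the unique opposite Schubert divisor (the complement of the big cell $B^-e$), hence an effective Cartier divisor on the smooth variety $G/P$; Poincar\'e duality of Schubert classes gives $[C]\cdot[C^{\op}]=1$; translation by the connected group $G$ preserves this number; and when $gC\not\subseteq C^{\op}$ the scheme $gC\cap C^{\op}$ is an effective divisor of degree one on the smooth curve $gC\cong\PP^1$, hence a reduced point. The paper instead runs a family argument in the style of \cite[Chp.~III, Theorem~10.8]{Ha1977Algebraic-geometry}: it forms the incidence scheme $(G\times C)\times_{G/P}C^{\op}\to G$, shows the projection to $G$ is finite over the open locus $V$ of finite intersections and birational --- the key input being that the fiber over $e\in G$, namely $C\cap C^{\op}$, is a single reduced point by the results of Ramanathan and Richardson on intersections of Schubert and opposite Schubert varieties --- so that Zariski's Main Theorem forces every fiber over $V$ to be a reduced point. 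The two proofs consume essentially the same geometric input: your identity $[C]\cdot[C^{\op}]=1$ is, in substance, the statement that $C$ and $C^{\op}$ meet in the single reduced point $s_\alpha P$, which is exactly what the cited Richardson-variety results deliver for the fiber over $e$. Your packaging is shorter and avoids the finiteness and \'etale-locus bookkeeping; the paper's version stays scheme-theoretic throughout and never invokes the cohomology ring. If you write yours up, do make explicit that $C^{\op}$ is Cartier because $G/P$ is smooth, and that the length of the scheme-theoretic intersection computes $\deg\bigl(\mathcal{O}_{G/P}(C^{\op})|_{gC}\bigr)$ because $gC$ is a smooth integral curve not contained in $C^{\op}$.
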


\begin{remark}
	Compare the proof of this lemma with
	\cite[Chp. III, Proof of Theorem~10.8]{Ha1977Algebraic-geometry}.
\end{remark}

\begin{proof}
	Consider the following pullback diagram
	\[
		\xymatrix{
			(G \times C) \times_{G/P} C^{\op} \ar[d]
			\ar[r] & C^{\op} \ar[d] \\
			G \times C \ar[r] & G/P
		}
	\]
	where $G \times C \to G/P$ denotes the map
	$(g, c) \mapsto gc$.
	Note that the vertical arrows are closed embeddings.
	Since $C$ is smooth, by generic smoothness
	\cite[Chp. III, Corollary~10.7]{Ha1977Algebraic-geometry}
	and $G$-equivariance, the morphism $G \times C \to G/P$ is smooth.
	Since $C^{\op}$ is reduced, it follows that the fiber product
	$(G \times C) \times_{G/P} C^{\op}$ is reduced
	\cite[Chp. II, Proposition~3.1]{GrRa2004Revetements-etales}.
	Let $q$ be the following composition
	\[
		q \colon (G \times C) \times_{G/P} C^{\op} \to G \times C \to G
	\]
	where the last map is the projection to the first factor. Note that the fiber of $q$
	over $g \in G$ is isomorphic to the scheme theoretic
	intersection $gC \cap C^{\op}$. Since $C$ is projective, the morphism
	$q$ is projective and thus by \cite[Theorem~14.8]{Ei1995Commutative-algebr}
	the subset
	\[
		V = \{ \, g \in G \ | \ \textrm{$gC \cap C^{\op}$ is finite} \, \}
	\]
	is open in $G$. Let $q' = q |_{q^{-1}(V)} \colon q^{-1}(V) \to V$.
	By definition, $q'$ is quasi-finite.
	Since $q$ is projective (and thus $q'$ also), it follows
	that $q'$ is finite; see
	\cite[Th\'eor\`eme~8.11.1]{Gr1966Elements-de-geomet-IV_3}.
	We claim that $q$ is birational.
	Indeed, this can be seen as follows. The fiber of $q$ over
	{$e \in G$
	is isomorphic to $C \cap C^{\op}$}.
	By \cite[Theorem~3 and Remark~3]{Ra1985Schubert-varieties}
	this last scheme is reduced and by
	\cite[Theorem~3.7]{Ri1992Intersections-of-d} it is
	irreducible and of dimension
	zero; cf.~also \cite{BrLa2003A-geometric-approa}.
	Thus the fiber of $q$ over $e$ is a reduced point.
	Hence, the tangent
	space of the fiber satisfies
	\[
		0 = T_{x_0} q^{-1}(e) = \ker d_{x_0} q
	\]
	where $\{ x_0 \} = q^{-1}(e)$. Therefore $q$ is immersive at $x_0$.
	Hence $q^{-1}(V)$ is smooth at $x_0$ by dimension reasons
	and $q'$ is \'etale in $x_0$.
	Let $S$ be the set of points in $q^{-1}(V)$, where
	$q'$ is not \'etale. By
	\cite[Chp. I, Proposition~4.5]{GrRa2004Revetements-etales} the set $S$
	is closed in $q^{-1}(V)$.
	As $q'$ is finite, $q(S)$ is closed in $V$. Clearly, $q'$
	restricts to a finite \'etale morphism
	\begin{equation}
		\label{eq.restr}
		q^{-1}(V \setminus q(S)) \longrightarrow V \setminus q(S) \, .
	\end{equation}
	Since $\{ x_0 \}$
	is a fiber of $q$ and since $q'$ is \'etale at $x_0$, it follows that
	$q(x_0) \not\in q(S)$, i.e.~$x_0 \in q^{-1}(V \setminus q(S))$.
	This implies that the morphism~\eqref{eq.restr} is of degree one
	{and therefore it is an isomorphism.
	Since $V$ is irreducible and since $q'$
	is finite, it follows that $q^{-1}(V \setminus q(S))$ is dense in $q^{-1}(V)$.
	As~\ref{eq.restr} is an isomorphism, $q^{-1}(V)$ is irreducible.
	This implies that $q'$ is birational.}
	Since $V$ is smooth and irreducible and since $q'$ is finite and birational,
	it follows that $q'$ is an isomorphism by Zariski's Main Theorem
	\cite[Corollaire~4.4.9]{Gr1961Elements-de-geomet-III_1}).
	This implies the lemma.
\end{proof}

\subsection{Moving a curve into $E$}
\label{sec.MovingIntoE}

\begin{proposition}
	If $X \subseteq G$ is a closed curve that is isomorphic to $\AA$,
	then there exists an automorphism $\varphi$ of $G$ such that
	$\varphi(X) \subseteq E$.
\end{proposition}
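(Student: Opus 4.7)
The plan is to implement the three-step outline sketched in the introduction. Throughout, abbreviate $U := R_u(P^-)$, so that $\pi \colon G \to G/U$ is a locally trivial principal $U$-bundle with quasi-affine base $G/U$ (recall that unipotent groups have no non-trivial characters).

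\emph{Step 1 (Generic projection).} Since $X \cong \AA$ is smooth with exactly one smooth point at infinity and $G$ is simple of rank at least two, Proposition~\ref{prop.UnipotProj} applied to the closed unipotent subgroup $U$ yields an automorphism $\varphi_1$ of $G$ together with some $g_0 \in G$ such that $G \to G/g_0 U g_0^{-1}$ restricts to a closed embedding on $\varphi_1(X)$. After replacing $\varphi_1$ by its composition with the inner automorphism $x \mapsto g_0^{-1} x g_0$ (which carries $g_0 U g_0^{-1}$ to $U$ and hence descends to an isomorphism $G/g_0 U g_0^{-1} \to G/U$), we may assume that $\pi$ itself restricts to a closed embedding on the updated $\varphi_1(X)$.

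\emph{Step 2 (Translating into $\pi^{-1}(\pi(E))$).} By Proposition~\ref{prop.keyProperteOfPi}, the complement $Z := (G/U) \setminus \pi(E)$ is closed of codimension at least two. As $G$ acts transitively on $G/U$ by left multiplication with $\dim U$-dimensional stabilizers, for every $v \in G/U$ the closed set $\{h \in G : hv \in Z\}$ has codimension at least two in $G$. Consequently the closed incidence variety
\[
	I := \{ (h,v) \in G \times \pi(\varphi_1(X)) : hv \in Z \}
\]
has dimension at most $\dim G - 1$, so its image under the first projection is contained in a proper closed subset of $G$. Choose $h$ outside this subset and set $\varphi_2 := \lambda_h \circ \varphi_1$, where $\lambda_h$ is left multiplication by $h$. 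Then $\pi(\varphi_2(X)) \subseteq \pi(E)$, that is $\varphi_2(X) \subseteq \pi^{-1}(\pi(E)) = EP^-$; and by $G$-equivariance of $\pi$ the restriction $\pi|_{\varphi_2(X)}$ is still a closed embedding.

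\emph{Step 3 (Shearing along the $\AA$-bundle $\pi|_E$).} By Proposition~\ref{prop.keyProperteOfPi}, $\pi|_E \colon E \to \pi(E)$ is a locally trivial $\AA$-bundle. Its restriction to the closed curve $\pi(\varphi_2(X)) \cong \AA$ is an $\AA$-bundle over the affine line and therefore admits a section $X' \subseteq E$. Both $\varphi_2(X)$ and $X'$ are thus sections of the principal $U$-bundle $\pi^{-1}(\pi(\varphi_2(X))) \to \pi(\varphi_2(X))$. Since $G/U$ is quasi-affine, the shearing-tool (Proposition~\ref{prop.ConstructionOfAuto}) supplies a $\pi$-fiber-preserving automorphism $\varphi_3$ of $G$ with $\varphi_3(\varphi_2(X)) = X'$. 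The composition $\varphi := \varphi_3 \circ \varphi_2$ then satisfies $\varphi(X) \subseteq E$, as required.

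The main technical point in this argument is Step~2: we must translate the curve into $\pi^{-1}(\pi(E))$ while preserving the embedding property established in Step~1. Both issues are handled precisely because Proposition~\ref{prop.keyProperteOfPi} guarantees codimension at least two for the complement (allowing a generic left translate to do the job) and because left translation commutes with $\pi$ (preserving the embedding condition). Given Propositions~\ref{prop.UnipotProj}, \ref{prop.keyProperteOfPi}, and~\ref{prop.ConstructionOfAuto}, the three steps are otherwise routine dimension counts and direct applications of the earlier tools.
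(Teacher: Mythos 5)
Your proof is correct and follows the same three-step route as the paper: generic projection via Proposition~\ref{prop.UnipotProj} (with the conjugation by $g_0$ made explicit), a left translation into $\pi^{-1}(\pi(E))$ using that the complement of $\pi(E)$ has codimension at least two, and the shearing-tool applied to a section of the $\AA$-bundle $E \to \pi(E)$; the only cosmetic difference is that you replace the paper's appeal to Kleiman's transversality theorem in the translation step by an explicit incidence-variety dimension count. The one point you omit is the case $\rank(G)=1$ (the proposition is stated for any simple $G$), where Proposition~\ref{prop.UnipotProj} is unavailable but the claim is vacuous since $E=G$.
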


\begin{proof}
	If $\rank(G) = 1$, then $E = G$ and there is nothing to prove.
	Thus we assume that $\rank(G) \geq 2$. Therefore, we we can
	apply Proposition~\ref{prop.UnipotProj} to $G$ and the unipotent
	subgroup $R_u(P^-)$ to get an automorphism
	$\varphi$ of $G$ such that $\pi \colon G \to G / R_u(P^-)$
	restricts to an embedding on $\varphi(X)$.
	Let us replace $X$ by $\varphi(X)$.
	Since the complement of $\pi(E)$ in $G/R_u(P^-)$ is closed and
	has codimension
	at least two in $G/R_u(P^-)$ by Proposition~\ref{prop.keyProperteOfPi},
	there exists by Kleiman's Theorem $g \in G$ such that $g \pi(X)$
	lies inside $\pi(E)$; see~\cite[Theorem~2]{Kl1974The-transversality}.
	Since $\pi$ is $G$-equivariant, it restricts to an isomorphism
	$g X \to \pi(gX)$. Hence, we can replace $X$ by $gX$
	and assume in addition that $\pi(X) \subseteq \pi(E)$.
	Since $\pi$ restricts to a locally trivial $\AA$-bundle
	$\pi |_E \colon E \to \pi(E)$
	by Proposition~\ref{prop.keyProperteOfPi} and since $\pi(X) \cong \AA$,
	there exists a section $\sigma$ of $\pi |_E$ over $\pi(X)$;
	see e.g.~\cite{BaCoWr1976Locally-polynomial}.
	By {Proposition~\ref{prop.ConstructionOfAuto}}
	there exists an automorphism
	of $G$ that moves $X$ to the section $\sigma(\pi(X))\subset E$
	and fixes $\pi \colon G \to G / R_u(P^-)$. This implies the result.
\end{proof}

\subsection{Moving a curve in $E$ into a proper subgroup}
\label{sec.MovingIntoProperSubgroup}

The aim of this section is to prove the following result.

\begin{proposition}
	\label{prop.MovingIntoProperSubgroup}
	Assume that $\rank{G} \geq 2$.
	If $X \subseteq E$ is a closed curve that is isomorphic to $\AA$,
	then there exists an automorphism $\varphi$
	of $G$ such that $\varphi(X)$ lies in a proper subgroup of $G$.
\end{proposition}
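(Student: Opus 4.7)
My plan is to combine Proposition~\ref{prop.key} (stated in the outline) with Lemma~\ref{lem.twosubgroups} to move $X$ into the maximal parabolic $P$, which is a proper subgroup of the simple rank-at-least-two group $G$.

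First, I will make the subgroup $K$ of $G$ satisfying $E=KP$ explicit. The natural choice is the rank-one subgroup $K$ of $G$ generated by $U_\alpha$ and $U_{-\alpha}$, which is a closed connected subgroup isomorphic to $\SL_2$ or $\PSL_2$, has Borel $K\cap P=(T\cap K)U_\alpha$, and acts transitively on the Schubert curve $C\subseteq G/P$ via $K/(K\cap P)\cong C$. A short Bruhat computation then gives $KP=P\cup U_\alpha s_\alpha P=P\cup B s_\alpha P=E$, recovering the decomposition from the outline. Observe moreover that $K$ is reductive (so $K\backslash G$ is affine), that $K$ acts freely on $G$ by left multiplication, and that $K$ leaves $E=KP$ stable; consequently the induced morphism $K\backslash E\hookrightarrow K\backslash G$ is a closed immersion.

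Next, given a closed curve $X\subseteq E$ isomorphic to $\AA$, I will invoke Proposition~\ref{prop.key} to produce an automorphism $\varphi$ of $G$ with $\varphi(X)\subseteq E$ such that $E\to K\backslash E$ restricts to an embedding on $\varphi(X)$. By the preceding closed immersion, the full quotient $G\to K\backslash G$ likewise restricts to an embedding on $\varphi(X)$. At this point all hypotheses of Lemma~\ref{lem.twosubgroups} are satisfied for the closed connected subgroups $K$ and $H:=P$: the product $KP=E$ is closed in $G$, and $K\backslash G$ is quasi-affine. The lemma then furnishes an automorphism $\psi$ of $G$ with $\psi(\varphi(X))\subseteq P$, and since $P$ is a proper parabolic subgroup of $G$, the composition $\psi\circ\varphi$ is the desired automorphism.

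The genuine difficulty of this part of the paper does not lie in the present proposition but in Proposition~\ref{prop.key}, whose embedding statement with respect to $E\to K\backslash E$ encapsulates the technical generic-projection arguments (and, per the outline, the $\CC^+$-equivariant surface considerations of Appendix~\ref{sec.C-equivmorphofsurf}); once Proposition~\ref{prop.key} is in hand, Proposition~\ref{prop.MovingIntoProperSubgroup} is essentially a packaging of it together with the structural Lemma~\ref{lem.twosubgroups}.
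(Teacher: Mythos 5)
Your proof is correct and follows essentially the same route as the paper: exhibit a reductive $K$ with $E=KP$, apply Proposition~\ref{prop.key} to make $G\to K\backslash G$ an embedding on the curve, and finish with Lemma~\ref{lem.twosubgroups}. The only (immaterial) difference is the choice of $K$: the paper takes $K=\Cent_G((\ker\alpha)^\circ)$, which contains the full torus $T$ so that $K\cap P=TU_\alpha$, whereas you take the semisimple rank-one subgroup $\langle U_\alpha,U_{-\alpha}\rangle$; both are connected and reductive with $K\cap P$ a Borel of $K$ having unipotent radical $U_\alpha\subseteq R_u(P)$, so both satisfy the hypotheses of Proposition~\ref{prop.key}.
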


Proposition~\ref{prop.MovingIntoProperSubgroup} is based on the following rather technical result.

\begin{proposition}
\label{prop.key}
Assume that $\rank(G) \geq 2$.
Let $K$ be a closed connected reductive subgroup of $G$ such that
$KP$ is closed in $G$.
Assume that $K \cap P$ is connected and solvable and moreover,
that $R_u(K \cap P)$ has dimension one and lies in $R_u(P)$.
If $X \subseteq KP$ is a closed curve that is isomorphic to $\AA$, then
there exists an automorphism $\varphi$ of $G$
such that $G \to K \backslash G$ restricts to an
embedding on $\varphi(X)$ {and $\varphi(KP) = KP$.}
\end{proposition}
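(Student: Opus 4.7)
The goal is to find an automorphism $\varphi$ of $G$ with $\varphi(KP)=KP$ such that $\pi_K\colon G\to K\backslash G$ restricts to a closed embedding on $\varphi(X)$. Note that $K\backslash KP\cong (K\cap P)\backslash P$ via $kp\mapsto(K\cap P)p$, and that $K\backslash G$ is affine since $K$ is reductive (Section~\ref{sec.notions}); hence properness of $\pi_K$ restricted to $\varphi(X)\cong\AA$ is automatic, and the task reduces to arranging injectivity and immersivity. The plan is to build $\varphi$ as a composition of shearing automorphisms from Proposition~\ref{prop.ConstructionOfAuto}, restricted to those that preserve $KP$: right shears $\varphi_f$ with image subgroup $H\subseteq P$ (so $KP\cdot H\subseteq KP$) and left shears $\psi_d$ with image subgroup $H\subseteq K$ (so $H\cdot KP\subseteq KP$).

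The first move is to apply the main generic projection result (Proposition~\ref{prop.UnipotProj}, which crucially uses $\rank(G)\geq 2$) to the one-dimensional unipotent subgroup $U:=R_u(K\cap P)$, which by hypothesis lies in $R_u(P)\subseteq P$. This yields an automorphism $\varphi_0$ of $G$ such that $G\to G/gUg^{-1}$ restricts to a closed embedding on $\varphi_0(X)$ for generic $g\in G$. Absorbing the conjugation via left-translation by $g$ and a subsequent right shear with target $U\subseteq P$, we may assume that the quotient $G\to G/U$ restricts to a closed embedding on $X$. In particular, using $U\subseteq K$ to factor the $K$-quotient as $KP\xrightarrow{q_U} U\backslash KP\xrightarrow{q}(K\cap P)\backslash P$, we now have $q_U$ already restricting to an embedding on $X$.

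The main obstacle is to improve this so that the full composition $\pi_K|_X=q\circ q_U|_X$ is an embedding. The residual failure is that two distinct $U$-orbits through $X$ may project under $q$ to the same point (equivalently, two points of $X$ lie in the same $K$-orbit on $KP$), or $q_U(X)$ may be tangent to a fiber of $q$. Since $K\cap P=T_K\ltimes U$ with $T_K$ a maximal torus normalizing $U$, these bad pairs and tangencies form a controlled finite set, and the union of the $U$-orbits of the corresponding points is a two-dimensional $U$-invariant affine subvariety $S\subseteq KP$, on which $\CC^+\cong U$ acts. The results on $\CC^+$-equivariant morphisms of affine surfaces from Appendix~\ref{sec.C-equivmorphofsurf} furnish a $U$-equivariant morphism $S\to P$ that separates the problematic points; extending this to a morphism $G/H\to H'$ for suitable subgroups $H,H'\subseteq P$ and feeding it into the shearing-tool produces the desired automorphism preserving $KP$. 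This surface argument, combined with the bookkeeping required throughout to maintain $\varphi(KP)=KP$, constitutes the main hurdle of the proof; the rank $\geq 2$ hypothesis enters essentially only to make the initial generic projection go through.
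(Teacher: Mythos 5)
Your overall strategy---compose shears that preserve $KP$ (right shears into subgroups of $P$, left shears into subgroups of $K$) and then invoke the $\C^+$-equivariant surface results of Appendix~\ref{sec.C-equivmorphofsurf}---has the right shape, but two of your specific choices break the argument. First, you invoke Proposition~\ref{prop.UnipotProj}, whose proof produces the automorphism from flexibility of $G$ and then requires a left translation by a \emph{generic} $g \in G$; neither of these preserves $KP$, so the constraints $\varphi(KP)=KP$ and $X \subseteq KP$ are lost at the very first step. The paper instead applies Remark~\ref{rem.OneDimProj} with $W = \UUU_P$ (legitimate because $\dim \UUU_P \geq 4$ by Lemma~\ref{lem.enoughUnipotentsInAParabolic}, using $\rank(G) \geq 2$): a \emph{generic} one-dimensional unipotent subgroup $U \subseteq P$ already makes $G \to G/U$ an embedding on $X$, with no automorphism needed. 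Second, your choice $U = R_u(K \cap P)$ is exactly the wrong one: since $U \subseteq K \cap P$, the identity coset of $(K \cap P) \backslash P$ is a fixed point of the right $U$-action, so the fixed-point-freeness hypotheses of Lemmas~\ref{lem.section_birational1} and~\ref{lem.section_embedd} fail. The paper shows that a fixed point of $\C^+(u)$ on $K \cap P \backslash P$ forces $\C^+(u)$ to be conjugate into $R_u(K \cap P) \subseteq R_u(P)$, and that generic $u \in \UUU_P$ avoid $R_u(P)$ because $P$ is not a Borel (again $\rank(G) \geq 2$); genericity of $u$ is therefore essential.

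The deeper gap is your assertion that the failure of injectivity of $X \to K \backslash KP \cong (K \cap P) \backslash P$ is ``a controlled finite set''. There is no a priori bound on the pairs $x \neq y$ in $X$ with $x y^{-1} \in K$, and producing one is the heart of the proof. The paper needs two separate stages: first Lemma~\ref{lem.section_birational1} produces a section of $XU \to XU/U$ on which $\eta$ is merely \emph{birational} onto its image $C$; then a second unipotent element $u'$ is found via Lemma~\ref{lem.gen-projection-birational} so that $S_{u'} \to S_{u'} \aquot \C^+(u')$ is birational on $C$, and only then does Lemma~\ref{lem.section_embedd} upgrade birational to an embedding. The existence of $u'$ rests on the codimension-three estimate of Lemma~\ref{lem.fiberDimension}, which for $\rank(G) = 2$ requires the explicit computations for types $A_2$ and $B_2$ in Lemma~\ref{lem.rank2}. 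So, contrary to your closing remark, the hypothesis $\rank(G) \geq 2$ is used throughout, not only for the initial projection, and the finiteness you assume is precisely what these lemmas are there to establish.
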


Before proving Proposition~\ref{prop.key}, we show how it implies
Proposition~\ref{prop.MovingIntoProperSubgroup}.

\begin{proof}[Proof of Proposition~\ref{prop.MovingIntoProperSubgroup}]
	Let $K = \Cent_G((\ker \alpha)^\circ)$ be the centralizer in $G$
	of the connected component of the identity element
	of the kernel of the root
	$\alpha \colon T \to {\C^*}$.
	By definition, $T$ and the root subgroups $U_{\pm \alpha}$ lie inside $K$.
	By \cite[Theorem~22.3, Corollary~26.2B]
	{Hu1975Linear-algebraic-g},
	the group $K$ is connected, reductive, the semisimple rank is one
	and the Lie algebra of $K$ decomposes as
	$\Lie{t} \oplus \Lie{u}_{\alpha} \oplus \Lie{u}_{-\alpha}$,
	where $\Lie{t}$ is the Lie algebra of $T$ and $\Lie{u}_{\pm \alpha}$
	is the Lie algebra of $U_{\pm \alpha}$.
	Since $K$ is connected and not solvable,
        $T U_\alpha$ is connected and solvable,
	and $T U_\alpha$ is of codimension one in $K$, it follows that
	$T U_\alpha$ is a Borel subgroup of $K$. Since
	$T U_\alpha \subseteq K \cap P \subseteq K$, the subgroup
	$K \cap P$ is parabolic in $K$ and in particular
	it is connected; see \cite[Corollary~23.1B]{Hu1975Linear-algebraic-g}.
	We have $K \cap P \neq K$, since otherwise $P$ would
	contain the root subgroup $U_{- \alpha}$
	and thus we would have $P = G$; see~\cite[Theorem~27.3]{Hu1975Linear-algebraic-g}. Hence
	\[
		K \cap P = T U_\alpha \, .
	\]
	Moreover, we have by \cite[\S30.2]{Hu1975Linear-algebraic-g}
	\[
		R_u(K \cap P) = U_\alpha \subseteq R_u(P) \, .
	\]
	We claim that $U_\alpha s_\alpha P = B s_\alpha P$
	inside $G$. Indeed,
	otherwise $U_\alpha s_\alpha P = s_\alpha P$,
	since $\dim B s_\alpha P = \dim E = 1 + \dim P$.
	Therefore $U_{-\alpha} = s_\alpha U_\alpha s_\alpha \subseteq P$,
	a contradiction. Hence it follows that
	\[
		E = U_\alpha s_\alpha P \cup P \, .
	\]
	Since $T$ and $U_{\pm\alpha}$ generate $K$,
	it follows that $K$ lies inside the minimal parabolic
	subgroup $P_{\{ \alpha \}} = B s_\alpha B \cup B$.
	By \cite[Theorem~13.18]{Bo1991Linear-algebraic-g}
	the reflection $s_\alpha$ generates the Weyl group of
	$K$ and, in particular,
	$s_\alpha$ lies in $K$. More precisely, every representative
	of $s_\alpha$ lies in $K$.
%
%
	In summary, we get
	\[
		E \subseteq KP \subseteq
		P_{\{ \alpha \}} P = B s_\alpha P \cup P = E \, ,
	\]
	which proves $E = KP$.
	
	Now, we can apply
	Proposition~\ref{prop.key} and thus we can
	assume that $G \to K \backslash G$ restricts to an embedding
	on $X$. Applying Lemma~\ref{lem.twosubgroups} to $G$
	and the {closed connected} subgroups
	$K$ and $P$ yields the desired result.
\end{proof}

The rest of this subsection is devoted to the proof of
Proposition~\ref{prop.key}. First we provide an estimation of the
dimension of the  intersection of every translate of the torus $T$ with
the variety $\UUU_G$ of unipotent elements in case $G$ is of rank two.
Note that by the classification of simple groups of rank two,
$G$ is either of type $A_2$, $B_2$ or $G_2$.

\begin{lemma}
	\label{lem.rank2}
	Assume that $\rank(G) = 2$. Then the following holds.
	\begin{enumerate}[$i)$]
		\item {If $G$ is of type $A_2$, then
			$T p \cap \UUU_G$ is finite for all $p \in P$.}
		\item If $G$ is of type $B_2$, then
			{$\dim( T g \cap \UUU_G) \leq 1$} for all $g \in G$.
	\end{enumerate}
\end{lemma}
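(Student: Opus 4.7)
The plan is to prove both parts by direct calculation after reducing to convenient representatives. Since both statements concern dimensions of intersections that are preserved under central isogeny, I would work with $G = \SL_3$ for (i) and $G = \Sp_4$ for (ii), taking $T$ to be the standard diagonal torus in each case.

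For part (i), I would choose $P$ to be one of the two block upper triangular maximal parabolics (the two cases are interchanged by the outer automorphism of $\SL_3$, so analyzing one suffices). For $p \in P$ of $(2,1)$-block form, written as
\[
	p = \begin{pmatrix} a & b & c \\ d & e & f \\ 0 & 0 & h \end{pmatrix}\, , \quad (ae-bd)h = 1\, ,
\]
and for $t = \mathrm{diag}(s_1, s_2, s_3)$ with $s_1 s_2 s_3 = 1$, the characteristic polynomial of $tp$ factors along the block decomposition as $(X - s_3 h)(X^2 - (s_1 a + s_2 e)X + s_1 s_2 (ae - bd))$. Equating this with $(X-1)^3$ and simplifying using the relations $s_1 s_2 s_3 = 1$ and $h(ae-bd) = 1$, one finds that the coefficient comparisons force $s_3 h = 1$ (so $s_3$, and hence $s_1 s_2$, is uniquely determined). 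What remains is the single trace condition $s_1 a + s_2 e = 2$ subject to $s_1 s_2 = h$; substituting $s_2 = h/s_1$ turns this into $a s_1^2 - 2 s_1 + he = 0$, which has at most two solutions (or at most one in degenerate cases). Thus $Tp \cap \UUU_G$ is finite.

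For part (ii), I would use only the trace condition. With $T = \{\mathrm{diag}(s_1, s_2, s_1^{-1}, s_2^{-1})\}$ the trace reads
\[
	\tr(tg) = s_1 g_{11} + s_2 g_{22} + s_1^{-1} g_{33} + s_2^{-1} g_{44}\, ,
\]
and if $tg$ is unipotent then $\tr(tg) = 4$. If every $g_{ii}$ vanishes, this equation is $0 = 4$ and $Tg \cap \UUU_G$ is empty. Otherwise $\tr(tg) - 4$ is a non-zero Laurent polynomial on the two-dimensional torus $T$ whose vanishing locus is a proper closed subvariety of dimension at most one and contains $Tg \cap \UUU_G$.

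The essential point is the type distinction: for $A_2$ the trace condition alone yields only dimension $\leq 1$, not finiteness, so exploiting the block structure of $P$ to extract a second independent equation is the crucial step; for $B_2$ the weaker bound allows the argument to rely on the trace alone.
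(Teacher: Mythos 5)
Your proposal is correct. For part $ii)$ it coincides with the paper's argument: reduce to $\Sp_4$ and observe that the trace condition $\tr(tg)=4$ already cuts $Tg$ down to a proper closed subset of the two-dimensional torus. For part $i)$ you reach the same conclusion by a noticeably cleaner route. The paper works with the parabolic stabilizing a line, writes out all three coefficient equations of the characteristic polynomial of $tp$ in the torus coordinates (trace, second elementary symmetric function, determinant), eliminates one variable, and then must argue that an irreducible cubic in the remaining two variables meets a non-trivial curve of degree at most two in finitely many points, with a separate degenerate case $p_{22}=p_{33}=0$ ruled out via $\det p =1$. You instead use the block-triangular shape of $tp$ to factor the characteristic polynomial as a linear factor times a quadratic; unipotence then forces $s_3h=1$, makes the constant-term condition automatic from $\det p=1$, and leaves the single univariate equation $as_1^2-2s_1+he=0$, whose $s_1$-coefficient $-2$ is never zero, so finiteness is immediate and no case distinction is needed. (The same factorization is in fact available for the paper's choice of parabolic, which is also block triangular, and your appeal to the outer automorphism to cover the other maximal parabolic is as legitimate as the paper's ``without loss of generality''.) The reduction to the simply connected representatives $\SL_3$ and $\Sp_4$ is asserted at the same level of detail in both proofs; if one wants to be pedantic, the point is that the preimage of $\UUU_G$ under a central isogeny is a finite union of central translates of the unipotent variety upstairs, so the dimension bounds transfer.
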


\begin{remark}
	To complement $i)$, note that for some $g \in G$
	the intersection $T g \cap \UUU_G$ is not finite.
	For example, if $G = \SL_3$, $T$ is the diagonal torus in $G$ and
	\[
		g =
		\begin{pmatrix}
			 3 & 0 & -4Ê\\
			 2 & 0 & -3 \\
			 0 & 1 & 0
		\end{pmatrix} \, ,
	\]
	then a calculation shows that $T g \cap \UUU_G$ is one-dimensional.
\end{remark}

\begin{proof}[Proof of Lemma~\ref{lem.rank2}]
	To every simple group $H$ there exists a simply connected
	simple group $\tilde{H}$ and an isogeny
	$\tilde{H} \to H$, i.e.~an epimorphism with finite kernel;
	see~\cite[\S23.1, Proposition~1]{Ch2005Classification-des}.
	Two simply connected simple groups with the same root
	system are always isomorphic by
	\cite[Theorem~32.1]{Hu1975Linear-algebraic-g}.
	Therefore it is enough to prove $i)$ for the simply connected
	group $G = \SL_3$ and to prove
	$ii)$ for the simply connected group
	$G = \Sp_4$;
	{see~\cite[\S20.1, \S22.1]{Ch2005Classification-des}
	and~\cite[Corollary~21.3C]{Hu1975Linear-algebraic-g}.}
	
	Assume $G$ is $\SL_3$. We can assume that $T$
	is the subgroup of $G$ of diagonal matrices and $B$ is the subgroup
	of upper triangular matrices. Moreover, we can assume without loss
	of generality that $P$ is the maximal parabolic subgroup
	\[
		P = \left\{ \begin{pmatrix}
			 \ast & \ast & \astÊ\\
			 0 & \ast & \ast \\
			 0 & \ast & \ast
		       \end{pmatrix} \right\}
		       \subseteq \SL_3 \, .
	\]
	An element $a \in \SL_3$ is unipotent if and only
	if one is the only root of its characteristic polynomial $\chi_a$.
	We have
	\[
		\chi_a(t) = t^3 - \tr(a) t^2 + s(a) t - 1
	\]
	where
	\[
		s(a) = (a_{11} a_{22} - a_{12} a_{21})
			+ (a_{11} a_{33} - a_{13} a_{31})
			+ (a_{22} a_{33} - a_{23} a_{32})
	\]
	and $a_{ij}$ denotes the $ij$-th entry of $a$.
	Let $p \in P$.
	The variety $T p \cap \UUU_G$ is isomorphic to
	\[
		S = \{ \, t \in T \ | \ tp \in \UUU_G \, \}  \, .
	\]
	Let $x, y, z$ denote the entries on the diagonal of a
	$3 \times 3$-diagonal matrix.
	The set $S$ can be realized as the closed subvariety
	of $\AA^3$ given by the equations
	\begin{align}
		3 &= x p_{11} + y p_{22} + z p_{33} \label{EQ1} \\
		3 &= xy p_{11} p_{22} +
			xzp_{11} p_{33} +
			yz(p_{22} p_{33} - p_{23} p_{32}) \label{EQ2} \\
		1 &= xyz \label{EQ3} \, .
	\end{align}
	Clearly, $p_{11}$ is non-zero. Inserting \eqref{EQ1} in \eqref{EQ3} yields
	the irreducible equation
	\begin{equation}
		\label{eq.FirstEquation}
		p_{11} = (3- y p_{22} - z p_{33})yz \, .
	\end{equation}
	Inserting \eqref{EQ1} in \eqref{EQ2} yields a non-trivial
	equation of degree $\leq 2$
	in $y$ and $z$. If $p_{22}$ or $p_{33}$
	is non-zero, then \eqref{eq.FirstEquation}
	is an equation of degree $3$ and thus $S$ is finite.
	If $p_{22} = p_{33} = 0$, then $S$ is realized as the closed subset of $\CC^2$
	given by the equations
	\[
		3 = - yz p_{23} p_{32} \et p_{11} =3yz \, .
	\]
	However, since $p$ has determinant equal to $1$, we get
	$-p_{11} p_{23} p_{32} = 1$. Hence, $S$ is empty
	in case $p_{22} = p_{33} = 0$. This proves $i)$.

	Assume that $G$ is $\Sp_4$. Since all non-degenerate
	alternating bilinear forms on an even dimensional vector space are
	equivalent, we can choose
	$\Omega$ as the matrix with entries $1$, $1$, $-1$, $-1$
	on the antidiagonal and all other entries equal to zero,
	and then define $\Sp_4$ as those $4 \times 4$-matrices $g$
	that satisfy $g^t \Omega g = \Omega$.
	Thus we can choose for the maximal torus $T$ the subgroup
	of $\Sp_4$ consisting of diagonal matrices with entries
	$t_1$, $t_2$, $t_2^{-1}$, $t_1^{-1}$ on the diagonal for arbitrary non-zero
	$t_1$ and $t_2$.
	If an element in $\GL_4$ is unipotent, then its trace is equal to $4$.
	Let $g \in \Sp_4$. One can see that
	\[
		\{ \, t \in T \ | \ \tr{tg} = 4 \, \}
	\]
	is a proper closed subset of the torus $T$ and thus
	$Tg \cap \UUU_{\Sp_4}$ is properly contained in $Tg$,
	which proves $ii)$.
\end{proof}

\begin{lemma}
	\label{lem.fiberDimension}
	Assume that $\rank(G) \geq 2$.
	Let $H \subseteq P$ be a connected closed solvable subgroup
	such that the unipotent radical $R_u(H)$ is one-dimensional.
	Denote by $\rho \colon P \to H \backslash P$ the canonical projection.
	Then for every $p \in P$ the fibers of the morphism
	\[
		\UUU_P \to \rho(p) \UUU_P \, , \quad u \mapsto \rho(p) u
	\]
	have codimension at least three in $\UUU_P$.
\end{lemma}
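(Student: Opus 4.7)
The fiber of the morphism $u \mapsto \rho(pu)$ over the point $\rho(p u_0)$ equals the coset intersection $(p^{-1}Hp)\, u_0 \cap \UUU_P$. Since $p \in P$, the conjugate $H' := p^{-1}Hp$ is again a closed connected solvable subgroup of $P$ whose unipotent radical $p^{-1}R_u(H)p$ is one-dimensional, so the lemma reduces to the following uniform claim: for every closed connected solvable subgroup $H \subseteq P$ with $\dim R_u(H) = 1$ and every $u_0 \in \UUU_P$,
\[
	\dim(H u_0 \cap \UUU_P) \leq \dim \UUU_P - 3.
\]

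Since $H$ is connected solvable, I fix a maximal torus $T_H$ of $H$, obtaining a semidirect product $H = T_H \ltimes R_u(H)$ and $\dim H = \dim T_H + 1$. Projecting from $H u_0 \cap \UUU_P$ to $R_u(H)$ via this decomposition, the theorem on fiber dimensions yields
\[
	\dim(H u_0 \cap \UUU_P) \leq 1 + \max_{u \in R_u(H)} \dim\bigl(T_H \cdot u u_0 \cap \UUU_P\bigr).
\]
I then extend $T_H$ to a maximal torus $T_P$ of $P$; because $P$ contains a Borel, $T_P$ is automatically a maximal torus of $G$. Since $\UUU_P = P \cap \UUU_G$ and $q := u u_0 \in P$, we have $T_H \cdot q \cap \UUU_P \subseteq T_P \cdot q \cap \UUU_G$, and conjugating by the element of $P$ that carries $T_P$ to the fixed torus $T$ (while preserving $q \in P$) makes Lemma~\ref{lem.rank2} applicable directly to $T_P \cdot q \cap \UUU_G$.

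The proof then splits according to the root system of $G$. For type $A_2$, Lemma~\ref{lem.rank2}(i) makes $T_P \cdot q \cap \UUU_G$ finite, whence $\dim(H u_0 \cap \UUU_P) \leq 1 = \dim \UUU_P - 3$. For type $B_2$, Lemma~\ref{lem.rank2}(ii) yields $\dim(T_P \cdot q \cap \UUU_G) \leq 1$, so $\dim(H u_0 \cap \UUU_P) \leq 2 = \dim \UUU_P - 3$. In the remaining cases (type $G_2$ and every simple $G$ of rank at least three) I expect the naive estimate $\dim(H u_0 \cap \UUU_P) \leq \dim H \leq \rank G + 1$ to suffice, the required codimension unpacking to the inequality $\dim P \geq 2 \rank G + 4$ for every maximal parabolic $P$. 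The main subtlety I anticipate is verifying this last inequality uniformly; the likely route is a type-by-type inspection leveraging the root-theoretic descriptions of $\dim R_u(P^{\pm})$ collected in Appendix~\ref{sec.Genonparabsubgroups} together with the Levi decomposition of $P$.
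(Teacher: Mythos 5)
Your proposal is correct and follows essentially the same route as the paper's proof: identify the fiber as a coset $H'u_0\cap\UUU_P$ with $H'=p^{-1}Hp$, fiber it over the one-dimensional $R_u(H')$ so that Lemma~\ref{lem.rank2} controls the torus-coset slices in types $A_2$ and $B_2$, and use the crude bound $\dim H\leq\rank(G)+1$ in all other cases. The one loose end you flag, namely $\dim\UUU_P\geq\rank(G)+4$ in the remaining cases, needs no further type-by-type inspection: it is immediate from Lemma~\ref{lem.enoughUnipotentsInAParabolic}, whose part $i)$ gives $\dim\UUU_P\geq 2\rank(G)+1\geq\rank(G)+4$ once $\rank(G)\geq 3$, and whose part $ii)$ gives $\dim\UUU_P=7=\rank(G)+5$ for type $G_2$.
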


\begin{proof}
	In case the rank of $G$ is at least $3$ or $G$ is of type
	$G_2$, it follows that
	\[
		\dim \UUU_P - \dim H \geq \dim \UUU_P - \dim T - 1
		\geq 3
	\]
	by Lemma~\ref{lem.enoughUnipotentsInAParabolic}, and thus the lemma
	is proved in these cases.
	
	Assume that $G$ is of type $A_2$.
	{For every $p \in P$ the quotient
	$\eta \colon P \to T \backslash P$ restricts
	to a morphism $Hp \cap \UUU_P \to \eta(R_u(H)p)$.
	By Lemma~\ref{lem.rank2} the fibers of this restriction are finite.
	Since $R_u(H)$ is one-dimensional, it follows that
	$H p \cap \UUU_P$ is at most one-dimensional.
	By Lemma~\ref{lem.enoughUnipotentsInAParabolic},
	we have $\dim \UUU_P = 4$, which implies the lemma in this case.}
	
	Assume that $G$ is of type $B_2$.  Analogously, it follows from
	Lemma~\ref{lem.rank2} and Lemma~\ref{lem.enoughUnipotentsInAParabolic}
	that $H p \cap \UUU_P$ is at most two-dimensional and that
	$\dim \UUU_P = 5$, which proves the lemma in this case.	
\end{proof}

\begin{proof}[Proof of Proposition~\ref{prop.key}]
	We start by observing that $K \cap P \backslash P$ is affine,
	since
	$K \cap P \backslash P \cong K \backslash KP$ is closed in
	$K \backslash G$ and since $K \backslash G$
	is affine ($K$ is reductive).
	In particular, every ${\C^+}$-orbit of a ${\C^+}$-action on
	$K \cap P \backslash P$ is closed.
	
	We proof that for a generic $u \in \UUU_P$
	the one-dimensional unipotent subgroup ${\C^+}(u)$ of $P$
	acts without fixed point on $K \cap P \backslash P$.
	Every ${\C^+}(u)$-orbit in $K \cap P \backslash P$ is either a fixed
	point or isomorphic to $\C$.
	If $p \in P$ would map to a fixed point in $K \cap P \backslash P$
	of the $\C^+(u)$-action, then
	$(KÊ\cap P) p {\C^+}(u) = (K \cap P) p$. This would imply that
	$p {\C^+}(u) p^{-1} \subseteq K \cap P$.
	Since $K \cap P$ is solvable, $p {\C^+}(u) p^{-1}$
	lies inside $R_u(K \cap P)$ and hence inside
	$R_u(P)$, by assumption. In particular, ${\C^+}(u)$ lies inside
	$R_u(P)$. However, generic $u \in \UUU_P$
	are not contained in $R_u(P)$, since $P$ is not a Borel subgroup of $G$.
	This proves our claim.
	
	Denote by $\eta \colon KP \to K \cap P \backslash P$
	the restriction of the canonical projection
	$G \to K \backslash G$.
	By Lemma~\ref{lem.enoughUnipotentsInAParabolic}
	we have $\dim \UUU_P \geq 4$ and hence there exists
	a one-dimensional unipotent subgroup $U$ of $P$
	such that $G \to G /U$ restricts to an embedding
	on $X$, by Remark~\ref{rem.OneDimProj}.
	Moreover, we can assume by the previous paragraph that
	$U$ acts without fixed point on $K \cap P \backslash P$.
	Thus we can apply Lemma~\ref{lem.section_birational1}
	to the $U$-equivariant morphism $XU \to \overline{\eta(XU)}$ to get
	a section $X'$ of $XU \to XU /U$ that is mapped birationally via $\eta$
	onto its image. Hence, after applying an appropriate automorphism
	of $G$ (that leaves $KP$ invariant), we can assume that $\eta$
	maps $X$ birationally onto its image;
	see Proposition~\ref{prop.ConstructionOfAuto}.
	Let us denote this image
	inside $\overline{\eta(XU)}$ by $C$. Note that
	$C$ is closed in $\overline{\eta(XU)}$, since $X$ is isomorphic to
	$\AA$. We apply
	Lemma~\ref{lem.gen-projection-birational} to the group $P$,
	the affine
	homogeneous $P$-space $K \cap P \backslash P$ and the
	curve $C$ in
	$K \cap P \backslash P$ (the codimension assumptions of
	Lemma~\ref{lem.gen-projection-birational} are guaranteed by
	Lemma~\ref{lem.fiberDimension}).
	Thus we get a $u' \in \UUU_P \setminus \{ e\}$
	such that $G \to G / {\C^+}(u')$ restricts to an embedding on $X$
	(by Remark~\ref{rem.OneDimProj}),
	${\C^+}(u')$ acts without fixed point on $K \cap P \backslash P$
	and $S_{u'} \to S_{u'} \aquot {\C^+}(u')$
	restricts to a birational morphism on $C$. Here $S_{u'}$
	denotes the closure of all the ${\C^+}(u')$-orbits in $K \cap P \backslash P$
	that pass trough $C$. 	
	Since $X$ is mapped birationally
	onto $C \subseteq S_{u'}$ and since $C$ is mapped
	birationally onto $S_{u'} \aquot {\C^+}(u')$ it follows that
	$\eta$ restricts to a birational map $X{\C^+}(u') \to S_{u'}$.
	Hence we can apply Lemma~\ref{lem.section_embedd}
	to the ${\C^+}(u')$-equivariant morphism $X{\C^+}(u') \to S_{u'}$
	and get a section $X''$ of $X{\C^+}(u') \to X{\C^+}(u') / {\C^+}(u')$
	that is mapped isomorphically via $\eta$ onto its image inside
	$S_{u'} \subseteq K \cap P \backslash P$.
	By Proposition~\ref{prop.ConstructionOfAuto}
	there exists an automorphism of $G$ (that leaves $KP$
	invariant) and maps $X$ to $X''$
	and thus we can assume that $\eta$ maps $X$
	isomorphically onto $K \cap P \backslash P$. Since
	$\eta$ is the restriction of $G \to K \backslash G$ to $KP$,
	this finishes the proof.
\end{proof}


\appendix

\section{Principal bundles over the affine line}
\label{sec.PrincipalBundlesOverA1}

In \cite{RaRa1984Principal-bundles-} it is stated by referring on
\cite{St1965Regular-elements-o} and \cite{Ra1983Deformations-of-pr},
that {over an algebraically closed field}
every principal $G$-bundle over the affine line is trivial if $G$
is a connected algebraic group.
However, the connectedness assumption is in fact superfluous
over an algebraically closed field of characteristic zero.
For the sake of completeness we give a prove of this result.

\begin{theorem}
	\label{thm.trivial_over_A1}
	Let $G$ be any algebraic group. Then
	every principal $G$-bundle over the affine line $\AA$ is trivial.
\end{theorem}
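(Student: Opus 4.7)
The plan is to reduce the general case to the connected case, which is the content of the cited results of Steinberg and Raghunathan--Ramanathan, by quotienting out by the connected component $G^\circ$ of the identity in $G$. Let $\pi \colon P \to \AA$ be a principal $G$-bundle. Since $G^\circ$ is a closed normal subgroup of $G$, I would form the principal $G^\circ$-bundle $q \colon P \to P/G^\circ$ and the induced principal $G/G^\circ$-bundle $\bar\pi \colon P/G^\circ \to \AA$, so that $\pi = \bar\pi \circ q$.

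Since $G/G^\circ$ is a finite group, $\bar\pi$ is a finite \'etale morphism. By Hurwitz's Theorem (in the form recalled in Subsection~\ref{subsec.ComparisonC3andSL2}, see \cite[Chp. IV, Corollary~2.4]{Ha1977Algebraic-geometry}), every connected component of $P/G^\circ$ maps isomorphically onto $\AA$. Choosing any such component $Z \subseteq P/G^\circ$, the inverse of the isomorphism $Z \to \AA$ furnishes a section $\sigma \colon \AA \to P/G^\circ$ of $\bar\pi$.

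Next, I would restrict the principal $G^\circ$-bundle $q$ over the section $\sigma(\AA) \cong \AA$. This gives a principal $G^\circ$-bundle $q^{-1}(\sigma(\AA)) \to \sigma(\AA)$ over $\AA$, with connected structure group $G^\circ$; hence it is trivial by the already known connected case, and thus admits a section $\tau \colon \sigma(\AA) \to q^{-1}(\sigma(\AA)) \subseteq P$. The composition $\tau \circ \sigma \colon \AA \to P$ is then a section of $\pi$, so $P \to \AA$ is trivial as a principal $G$-bundle.

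The main point to verify carefully will be that $P/G^\circ$ exists as a variety and that $\bar\pi$ is finite \'etale, rather than merely a principal bundle for an \'etale group scheme; both facts follow from the fact that $\pi$ is locally trivial in the finite \'etale topology and that $G^\circ$ is open in $G$, so étale-locally on $\AA$ the map $q$ is the trivial projection $G^\circ \times U \to G \times U$ and $\bar\pi$ is the projection $(G/G^\circ) \times U \to U$ onto an \'etale cover. A clean write-up should spell this out (or quote the appropriate statement from~\cite{Se1958Espaces-fibres-}), but otherwise each individual step is straightforward.
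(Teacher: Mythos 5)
Your proposal is correct and follows essentially the same route as the paper's own proof: the paper factors $P \to P \times^G G/G^\circ \to \AA$ (which is precisely your $P \to P/G^\circ \to \AA$), obtains a section of the finite covering $P \times^G G/G^\circ \to \AA$ via Hurwitz, and then trivializes the principal $G^\circ$-bundle over that section using the connected case (its Theorem~\ref{thm.TrivialPrincipalBundle}). The only cosmetic difference is that the paper realizes the quotient $P/G^\circ$ as the associated bundle $P \times^G G/G^\circ$ and cites~\cite{Se1958Espaces-fibres-} for the facts you flag as needing verification.
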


Before starting with the proof, let us recall a very important construction
that associates a fiber bundle $P \times^G F \to X$ to a principal
$G$-bundle $\pi \colon P \to X$ and a variety $F$ with a left $G$-action
(see \cite[Proposition~4]{Se1958Espaces-fibres-}): the
variety $P \times^G F$ is defined as the quotient of
$P \times F$ by the right $G$-action
\[
	(p, f) \cdot g = (pg, g^{-1}f)
\]
{and the} canonical map $P \times^G F \to X$
is a bundle with fiber $F$ which becomes locally trivial after
a finite \'etale base change, see \cite[Example~c), \S3.2]{Se1958Espaces-fibres-}.

\begin{proof}[Proof of Theorem~\ref{thm.trivial_over_A1}]
	Let $P \to \CC$ be a principal $G$-bundle.
	Let $G^0$ be the connected component of the identity element in $G$.
	The principal $G$-bundle factorizes as
	\[
		P \longrightarrow P \times^G G / G^0 \longrightarrow \AA \, .
	\]
	The first morphism is a principal $G^0$-bundle by~\cite[Proposition~8]{Se1958Espaces-fibres-}.
	The second morphism is a principal
	$G/G^0$-bundle and since $G/G^0$ is finite, it is a finite morphism;
	see~\cite[Proposition~5 and \S3.2, Example~a)]{Se1958Espaces-fibres-}.
	Since the base is $\AA$, this second principal bundle
	admits a section $s \colon \AA \to P \times^G G /G^0$
	(which follows from
	Hurwitz's Theorem \cite[Chp. IV, Corollary~2.4]{Ha1977Algebraic-geometry}).
	Due to Theorem~\ref{thm.TrivialPrincipalBundle},
	the principal $G^0$-bundle $P \to P \times^G G/G^0$
	is trivial over $s(\AA)$, and thus $P \to \AA$ admits a section, which proves
	the Theorem.
\end{proof}

{The main step in the following Theorem is due to Steinberg
\cite{St1965Regular-elements-o}.}

\begin{theorem}
	\label{thm.TrivialPrincipalBundle}
	Let $G$ be a connected algebraic group. Then, every
	principal $G$-bundle over a smooth affine rational curve is trivial.
\end{theorem}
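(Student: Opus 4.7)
The plan is to proceed by dévissage along the canonical short exact sequences $1 \to R_u(G) \to G \to G/R_u(G) \to 1$, $1 \to [\bar G, \bar G] \to \bar G \to T \to 1$, and $1 \to Z \to \tilde H \to H \to 1$, where $\bar G := G/R_u(G)$ is reductive, $T$ is its torus quotient, $H := [\bar G, \bar G]$ is semisimple, and $\tilde H \to H$ is the simply connected cover with finite central kernel $Z$. The guiding principle I will use at each step is the standard one: if $1 \to N \to G' \to Q \to 1$ is an exact sequence of algebraic groups and $P \to X$ is a principal $G'$-bundle, then triviality of the induced $Q$-bundle $P/N \to X$ produces a section along which the principal $N$-bundle $P \to P/N$ pulls back to a principal $N$-bundle on $X$; so if in addition every principal $N$-bundle on $X$ is trivial, then $P$ is trivial.

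First I would handle the unipotent radical. A smooth affine rational curve $X$ is necessarily an open subset of $\AA$, of the form $X = \AA \setminus \{p_1, \ldots, p_n\}$; in particular $X$ is affine and $H^1(X, \mathcal{O}_X) = 0$, so every principal $\CC^+$-bundle on $X$ is trivial. Filtering $R_u(G)$ by closed normal subgroups with successive quotients isomorphic to $\CC^+$ and iterating the above dévissage then yields that every principal $R_u(G)$-bundle on $X$ is trivial. For the torus quotient $T \cong (\CC^\ast)^k$ of $\bar G$, principal $T$-bundles on $X$ are classified by $\Pic(X)^k$, and $\Pic(X) = 0$ because the coordinate ring of $X$ is a localization of $\CC[t]$ and hence a UFD.

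It remains to treat the semisimple case. Here I would pass to the simply connected cover $\tilde H \to H$: from the resulting central short exact sequence, a principal $H$-bundle on $X$ lifts to a principal $\tilde H$-bundle on $X$ provided its obstruction class in $H^2_{\text{\'et}}(X, Z)$ vanishes, which is indeed the case since a smooth affine curve over an algebraically closed field of characteristic zero has \'etale cohomological dimension at most one (Artin vanishing). Finally, by the classical Steinberg--Ramanathan result~\cite{St1965Regular-elements-o, Ra1983Deformations-of-pr, RaRa1984Principal-bundles-}, every principal $\tilde H$-bundle on a smooth affine rational curve is trivial; the original $H$-bundle, being the quotient of such a trivial bundle by the finite central subgroup $Z$, is therefore trivial as well. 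The main obstacle, which I would use as a black box, is this Steinberg--Ramanathan theorem for simply connected semisimple groups; all intermediate reductions are standard.
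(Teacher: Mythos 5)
Your d\'evissage strategy is a genuinely different route from the one taken in the paper, and most of its reductions are sound: the unipotent radical is handled by $H^1(X,\mathcal{O}_X)=0$, the torus quotient by $\Pic(X)=0$ (both groups being special in the sense of Serre, so that their torsors are Zariski-locally trivial and classified by these cohomology groups), and the passage through the central isogeny $\tilde H\to H$ is justified because $H^2_{\mathrm{\acute et}}(X,Z)=0$ for an affine curve over $\CC$. The problem is the final black box. The references you invoke do not contain the statement you need: Raghunathan--Ramanathan \cite{RaRa1984Principal-bundles-} (building on \cite{St1965Regular-elements-o} and \cite{Ra1983Deformations-of-pr}) treat principal bundles over the \emph{affine line} $\AA$ itself, whereas the theorem here is asserted for an arbitrary smooth affine rational curve, i.e.~for $\AA$ with finitely many points removed (such as $\CC^\ast$). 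What Steinberg's theorem actually yields over such a curve $X$ is only \emph{generic} triviality: since the function field of $X$ has cohomological dimension one, the relevant Galois cohomology set vanishes and the bundle admits a rational section. Passing from a rational section to a global one is the real content of the theorem, and your proposal defers exactly this step to a citation that does not cover it in the stated generality.

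The paper's proof supplies precisely that missing globalization, and does so for arbitrary connected $G$ without any d\'evissage: after obtaining a rational section via the Galois-cohomology argument just described, it considers the associated bundle $E\times^G G/B\to X$, which is proper, so the rational section extends to all of $X$ by the valuative criterion of properness; the resulting reduction to a $B$-bundle over the image of this section is then trivial by Proposition~\ref{prop.trivialSolvable}, because $X$ is affine with trivial Picard group. If you want to keep your d\'evissage architecture, you must either restrict the statement to $X=\AA$ (which happens to be the only case used elsewhere in the paper) or replace the black box for the simply connected semisimple factor by an actual argument of this kind --- a rational section plus extension via $G/B$, or a Grothendieck--Serre/strong-approximation argument over the Dedekind ring $\mathcal{O}(X)$. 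As written, your proof establishes the theorem only over $\AA$, not over every smooth affine rational curve.
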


\begin{proof}
	{Let $X$ be a smooth affine rational curve and}
	let $E \to X$ be a principal $G$-bundle.

	First we prove that $E \to X$ admits a section that is defined over
	some open subset of $X$.
	By definition there exists a finite \'etale map from
	an affine curve $U'$ onto an open subset $U$ of the curve $X$
	such that the pull back $E_{U'} \to U'$ is a trivial principal $G$-bundle.
	Let $K$ be the function field of $U$ and let $K'$ be the function field of $U'$.
	We can assume that the field extension $K' /K$ is finite and Galois, by~\cite[\S1.5]{Se1958Espaces-fibres-}. Let $\Gal(K' /K)$ denote the
	Galois group of this extension. We denote by $G(K')$
	the $K'$-rational points of $G$, i.e.~the group of rational maps
	$U' \dashrightarrow G$.
	By~\cite[\S2.3b)]{Se1958Espaces-fibres-} it follows that
	the first Galois cohomology set
	\[
		H^1(\Gal(K'/K), G(K'))
	\]
	describes the isomorphism classes
	of principal $G$-bundles that are defined over some non-specified
	open subset of $U$ such that their pull back via
	$U' \to U$ admit a section over some open $\Gal(K'/K)$-invariant
	subset of $U'$. Hence it is enough to prove that
	$H^1(\Gal(K'/K), G(K'))$ is trivial.
	Let $\bar{K}$ be an algebraic closure of $K$
	that contains $K'$. By
	\cite[\S5.8, Chp.~I]{Se1994Cohomologie-galois},
	the natural map
	\[
		H^1(\Gal(K'/K), G(K')) \to H^1(\Gal(\bar{K}/K), G(\bar{K}))
	\]
	is injective. Note, that $G(\bar{K})$ is an algebraic group
	over $\bar{K}$. Since $K'$
	has transcendence degree one over the ground field,
	the so-called (cohomological)
	dimension of $K'$ is at most one by
	\cite[Example~b), \S3.3, Chp. II]{Se1994Cohomologie-galois}.
	Now, by a result of Steinberg,
	$H^1(\Gal(\bar{K}/K), G(\bar{K}))$ is trivial;
	see~\cite[Theorem~1.9]{St1965Regular-elements-o}.
	Hence, $E \to X$ admits a section over some open subset of $X$.
	
	The principal $G$-bundle $E \to X$ decomposes as
	\[
		E \to E \times^G G/B \to X
	\]
	where the first morphism is a principal $B$-bundle and the second morphism
	is a $G/B$-bundle, locally trivial in the \'etale topology. 	
	Since $E$ becomes trivial over some open subset $V$ of $X$, it follows that
	$E \times^G G/B$ becomes also trivial over $V$, hence
	there exists a rational section $s \colon X \dashrightarrow E \times^G G/B$
	that is defined over $V$.
	Since $G/B$ is projective, to every point $x$ in $X$
	there is a finite \'etale map $f_x$ onto an open neighbourhood of $x$
	such that the pull back of $E \times^G G/B \to X$ via $f_x$
	is projective. This implies that $E \times^G G/B \to X$ is universally closed
	and hence proper.
	Since $X$ is a smooth curve, it follows by the Valuative Criterion
	of Properness that the section $s$ is defined on the whole $X$;
	see \cite[Theorem~4.7, Chp.~II]{Ha1977Algebraic-geometry}.
	Thus the restriction of the {principal} $B$-bundle $E \to E \times^G G/B$
	to $s(X)$ is trivial by Proposition~\ref{prop.trivialSolvable},
	since $X$ has a trivial Picard group. Hence, we proved that
	$E \to X$ admits a section, which implies the statement of the theorem.
\end{proof}

\begin{proposition}
	\label{prop.trivialSolvable}
	{Let $G$ be a connected, solvable algebraic group.
	Then, every principal $G$-bundle
	over any affine variety with vanishing Picard group
	is trivial.}
\end{proposition}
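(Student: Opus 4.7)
The plan is to proceed by induction on $\dim G$. For the base case, connectedness forces $G = \{e\}$ when $\dim G = 0$, and the claim is trivial. For $\dim G = 1$, the group $G$ is isomorphic to $\CC^+$ or $\CC^*$, both of which are \emph{special} in the sense of Serre, so principal $G$-bundles are Zariski-locally trivial. Principal $\CC^+$-bundles are $\mathcal{O}_X$-torsors, classified by $H^1(X, \mathcal{O}_X)$, which vanishes for affine $X$ by Serre's vanishing theorem; principal $\CC^*$-bundles correspond to line bundles, classified by $\Pic(X)$, which vanishes by hypothesis. In both cases the bundle is trivial.

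For the inductive step, we assume $\dim G \geq 2$ and that the statement holds for all connected solvable groups of dimension strictly less than $\dim G$. The first task is to produce a closed connected normal subgroup $N \trianglelefteq G$ with quotient $G/N$ isomorphic to $\CC^+$ or $\CC^*$. This uses the structure theory of connected solvable algebraic groups in characteristic zero: the Levi decomposition $G = R_u(G) \rtimes T$, combined with the fact that both connected unipotent groups and tori admit composition series by normal closed connected subgroups whose successive quotients are isomorphic to $\CC^+$ or $\CC^*$. The resulting $N$ is connected, solvable, and has $\dim N = \dim G - 1$.

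Given a principal $G$-bundle $E \to X$, the plan is then to factor it as
\[
    E \longrightarrow E/N \longrightarrow X,
\]
where the first map is a principal $N$-bundle (for the $N$-action on $E$ coming from the $G$-action) and the second map is a principal $G/N$-bundle (by normality of $N$). By the base case applied to the one-dimensional group $G/N$, the bundle $E/N \to X$ is trivial and admits a section $s \colon X \to E/N$. Pulling the principal $N$-bundle $E \to E/N$ back along $s$ yields a principal $N$-bundle $s^* E \to X$ over the affine variety $X$ with vanishing Picard group. By the inductive hypothesis this bundle is trivial, hence admits a section, whose composition with the canonical morphism $s^* E \to E$ is a section of $E \to X$; triviality of $E \to X$ follows.

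The main obstacle is the structure-theoretic step producing $N$; once this subgroup is in hand, the remainder is the standard reduction-of-structure-group technique, and the only cohomological input beyond the one-dimensional base case is Serre's vanishing $H^1(X, \mathcal{O}_X) = 0$ for affine $X$ together with the hypothesis $\Pic(X) = 0$.
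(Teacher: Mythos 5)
Your argument is correct, and it rests on the same essential inputs as the paper's proof: the d\'evissage of a connected solvable group into one-dimensional pieces isomorphic to $\CC^+$ and $\CC^*$, specialness of these groups (so that the relevant bundles are Zariski-locally trivial), and the vanishing of $H^1(X,\OOO_X)$ for affine $X$ together with the hypothesis $\Pic(X)=0$. The difference is one of packaging. The paper works entirely in nonabelian \v{C}ech cohomology: it classifies locally trivial principal $G$-bundles by $\check{H}^1(X,\underline{G})$, applies Frenkel's exact sequence of pointed sets to the decomposition $G=U\rtimes T$, and shows the two outer terms $\check{H}^1(X,\underline{U})$ and $\check{H}^1(X,\underline{T})$ vanish by filtering $U$ by copies of $\CC^+$ and $T$ by copies of $\CC^*$. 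You instead induct on $\dim G$ and perform the reduction of structure group geometrically: factor $E\to E/N\to X$ through a connected normal subgroup $N$ of codimension one, trivialize the one-dimensional quotient bundle, pull the $N$-bundle back along the resulting section, and invoke the inductive hypothesis over $X$. This is precisely the geometric content of the exact sequence the paper cites, so the two proofs are equivalent in substance; what your version additionally requires is the existence of a closed connected normal subgroup $N$ of codimension one with $G/N\cong\CC^+$ or $\CC^*$, which you assert via structure theory and which indeed holds (take the preimage in $G$ of a codimension-one subtorus of $G/R_u(G)$ if the torus part is non-trivial, and otherwise the preimage of a hyperplane in the vector group $G/[G,G]$; normality is automatic because these subgroups contain a normal subgroup with commutative quotient). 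The paper's formulation is more compact; yours avoids the nonabelian-cohomology formalism at the price of spelling out this structural step.
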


\begin{proof}
	{Let $X$ be an affine variety.}
	By \cite[Proposition~14]{Se1958Espaces-fibres-} every principal
	$G$-bundle is locally trivial, since $G$ is {connected and} solvable.
	Note that the first \v{C}ech cohomology
	\[
		\check{H}^1(X, \underbar{G})
	\]
	is a pointed set that corresponds to the isomorphism classes of
	locally trivial principal $G$-bundles over $X$, where $\underbar{G}$
	denotes the sheaf of groups on $X$ with sections over an open subset
	$U \subseteq X$ {being} the morphisms $U \to G$; see
	\cite[\S3]{Fr1957Cohomologie-non-ab} and \cite[\S3]{Se1958Espaces-fibres-}.
	Since $G$ is solvable {and connected},
	there exists a semidirect product decomposition
	{$G = U \rtimes T$} for a torus $T$ and a unipotent group $U$.
	The short exact sequence corresponding to this decomposition
	yields an exact sequence in cohomology
	\[
		\check{H}^1(X, \underbar{U}) \to \check{H}^1(X, \underbar{G})
		\to \check{H}^1(X, \underbar{T}) \, ;
	\]
	{see \cite[Th\'eor\`eme~I.2]{Fr1957Cohomologie-non-ab}.}
	However, by using a decreasing chain of closed normal subgroups of $U$
	such that each factor is isomorphic to ${\C^+}$
	and by using that
	$\check{H}^1(X, \underbar{${\C^+}$}) = H^1(X, \OOO_X)$ is trivial (since $X$
	is affine) it follows that $\check{H}^1(X, \underbar{U})$ is trivial.
	Since the Picard group
	$\check{H}^1(X, \underbar{${\C^*}$}) = H^1(X, \OOO_X^\ast)$
	vanishes it follows analogously
	that $\check{H}^1(X, \underbar{T})$ is trivial, whence
	$\check{H}^1(X, \underbar{G})$ is trivial. This implies the proposition.
\end{proof}

\begin{remark}
	\label{rem.Triviality_of_affine_bundles}
	The proof of Proposition~\ref{prop.trivialSolvable}
	shows the following. If $G$ is unipotent, then
	every principal $G$-bundle over any affine variety is trivial.
\end{remark}

\section{Generalities on parabolic subgroups}
\label{sec.Genonparabsubgroups}
Throughout this appendix we fix the following notation.
Let $G$ be a connected reductive algebraic group, $B$ a Borel subgroup, $T$
a maximal torus in $B$ and $W$ the Weyl group with respect
to $T$. Moreover, we denote by $\Delta$ the set of simple roots
of $G$ with respect to $(B, T)$.

\subsection{The opposite parabolic subgroup}
\label{sec.OppositeParabolicSubgroup}
Let $P$ be a parabolic subgroup
that contains $B$, i.e.~$P = B W_I B$
where $I$ is a subset of $\Delta$ and $W_I$ is
the subgroup of $W$ generated by the reflections corresponding to roots
in $I$. There exists a unique parabolic subgroup $P^-$ that contains $T$
such that $P \cap P^-$ is a Levi factor of $P$ and $P^-$, i.e.~there
are semidirect product decompositions
\[
	P = R_u(P) \rtimes (P \cap P^-) \quad \textrm{and} \quad
	P^- = R_u(P^-) \rtimes (P \cap P^-) \, ;
\]
see \cite[Corollary 8.4.4.]{Sp2009Linear-algebraic-g}
and \cite[Proposition 14.21]{Bo1991Linear-algebraic-g}.
We call $P^-$ the \emph{opposite parabolic subgroup} of $P$ with respect to $T$.
In fact we can describe
$P^-$ as follows.
\begin{lemma}
	\label{lem.opposite_parabolic}
	We have $P^- = B^- W_I B^-$.
\end{lemma}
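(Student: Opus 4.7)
The plan is to verify that $B^- W_I B^-$ satisfies the defining properties of the opposite parabolic $P^-$ (stated in the paragraph preceding the lemma), and then invoke the uniqueness of $P^-$ from Springer's text to conclude equality. Concretely, I will show that $B^- W_I B^-$ is a parabolic subgroup of $G$ containing $T$, and that $P \cap (B^- W_I B^-)$ is a common Levi factor of $P$ and of $B^- W_I B^-$.

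First, $B^- W_I B^-$ is a (standard) parabolic subgroup with respect to the Borel $B^-$, containing $T$. Indeed, with respect to $B^-$ the base of simple roots is $-\Delta$, and the set $-I \subseteq -\Delta$ generates the same subgroup of $W$ as $I$ since reflections satisfy $s_\alpha = s_{-\alpha}$; hence $B^- W_I B^- = B^- W_{-I} B^-$ is parabolic by the classical correspondence between parabolics containing a Borel and subsets of the associated simple roots. In particular, this subgroup contains $T \subseteq B^-$.

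Next, let $\Phi_I \subseteq \Phi$ denote the root subsystem spanned by $I$, and write $\Phi^+$ (resp.~$\Phi^-$) for the positive (resp.~negative) roots with respect to $B$. The standard root-theoretic description of parabolics containing $T$ identifies $P$ with the subgroup generated by $T$ and $\{U_\alpha : \alpha \in \Phi^+ \cup \Phi_I\}$, and $B^- W_I B^-$ with the subgroup generated by $T$ and $\{U_\alpha : \alpha \in \Phi^- \cup \Phi_I\}$. Since both subgroups contain $T$ together with $U_\alpha$ for every $\alpha \in \Phi_I$, they both contain the standard Levi $L_I := \langle T, U_\alpha : \alpha \in \Phi_I\rangle$. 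Using Bruhat decomposition relative to $B$ (respectively $B^-$) one checks $P \cap (B^-W_IB^-) = L_I$: the roots appearing in this intersection are exactly $(\Phi^+\cup\Phi_I)\cap(\Phi^-\cup\Phi_I)=\Phi_I$.

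Finally, $L_I$ is a Levi factor of $P$ with unipotent radical $R_u(P)=\langle U_\alpha : \alpha \in \Phi^+\setminus \Phi_I\rangle$, and by the symmetric description $L_I$ is likewise a Levi factor of $B^-W_IB^-$ with unipotent radical $\langle U_\alpha : \alpha \in \Phi^-\setminus \Phi_I\rangle$. The uniqueness of the opposite parabolic (cited from \cite[Cor.~8.4.4]{Sp2009Linear-algebraic-g} and \cite[Prop.~14.21]{Bo1991Linear-algebraic-g}) then gives $P^- = B^- W_I B^-$. The main (minor) subtlety is the verification that $B^- W_I B^-$ has the claimed root-subgroup description and that its intersection with $P$ is exactly $L_I$; both are standard consequences of the Bruhat decomposition, and no deeper ingredient is needed.
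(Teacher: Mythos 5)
Your proof is correct and follows essentially the same route as the paper's: both recognize $B^-W_IB^-=B^-W_{-I}B^-$ as a standard parabolic containing $T$, exhibit a common Levi factor with $P$, and conclude from the uniqueness of the opposite parabolic. The only difference is in the bookkeeping: the paper realizes the Levi as the centralizer $\Cent_G(Z)$ of a subtorus and rules out a nontrivial $R_u(B^-W_IB^-)\cap P$ by a contradiction with a root subgroup, whereas you compute the root set of the intersection directly as $(\Phi^+\cup\Phi_I)\cap(\Phi^-\cup\Phi_I)=\Phi_I$ --- which tacitly uses the standard fact that the intersection of two parabolic subgroups containing $T$ is connected and generated by $T$ and the root subgroups it contains.
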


For the lack of reference, we provide a proof.

\begin{proof}
	Let $Z$ be the connected component of the identity element in
	the group
	$\bigcap_{\gamma \in I} \ker \gamma$. By
	\cite[\S30.2]{Hu1975Linear-algebraic-g}, the centralizer
	$\Cent_G(Z)$ is a Levi factor of $P$,
	i.e.~$P = \Cent_G(Z) \ltimes R_u(P)$. Let $Q$ be $B^- W_I B^-$.
	In fact, $Q = B^- W_{-I} B^-$ since $W_I = W_{-I}$.
	Moreover, $Z$ is the connected component of the identity element in
	$\bigcap_{\gamma \in -I} \ker \gamma$
	and thus it follows that $Q = \Cent_G(Z) \ltimes R_u(Q)$.
	Clearly, $R_u(Q) \cap P$ is a unipotent subgroup of $R_u(Q)$
	that is invariant under conjugation by $T$. If
	$R_u(Q) \cap P$ is non-trivial, it contains a root subgroup $U_{\beta}$, by
	\cite[Proposition~28.1]{Hu1975Linear-algebraic-g}) for a certain
	root $\beta$.
	Note that $\beta$ is a negative root with respect to $\Delta$ which is not a
	$\ZZ$-linear combination of roots in $I$, by
	\cite[\S30.2]{Hu1975Linear-algebraic-g} applied to $(B^-, Q)$.
	Since $\beta$ is also a root of $P$ with respect to $T$, we get
	a contradiction to \cite[Proposition~30.1]{Hu1975Linear-algebraic-g}
	applied to $(B, P)$. Hence $R_u(Q) \cap P$ is trivial. This implies that
	$P \cap Q = \Cent_G(Z)$ and thus $P$ and $Q$ are opposite parabolic
	subgroups. Since $P$ and $Q$ contain $T$, we get $Q = P^-$.
\end{proof}
	
	By \cite[Proposition~14.21]{Bo1991Linear-algebraic-g}
	we have that $PP^-$ is open in $G$ and the product map
	induces an isomorphism of varieties
	\begin{equation}
		\label{eq.productdecomp}	
		R_u(P) \times  (P \cap P^-) \times R_u(P^-)
		\stackrel{\cong}{\longrightarrow} P P^- \, .
	\end{equation}
	In particular, we get the following.
	\begin{lemma}
		\label{lem.dimG_is_dimR_uPminus_plus_dimP}
		We have $\dim G = \dim R_u(P^-) + \dim P$.
	\end{lemma}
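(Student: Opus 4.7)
The plan is to read off the identity directly from the product decomposition \eqref{eq.productdecomp} together with the Levi decomposition of $P$ already recorded above.

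First I would note that since $PP^-$ is open in $G$ and $G$ is irreducible, we have $\dim PP^- = \dim G$. Next, since \eqref{eq.productdecomp} is an isomorphism of varieties,
\[
\dim G \;=\; \dim PP^- \;=\; \dim R_u(P) + \dim(P \cap P^-) + \dim R_u(P^-).
\]
Finally, the Levi decomposition $P = R_u(P) \rtimes (P \cap P^-)$ (recorded at the top of Appendix~\ref{sec.OppositeParabolicSubgroup}) gives $\dim P = \dim R_u(P) + \dim(P \cap P^-)$, and substituting this yields $\dim G = \dim P + \dim R_u(P^-)$, as claimed.

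There is no real obstacle here; the statement is a bookkeeping consequence of the structure theorems already cited. The only point that deserves a sentence is the implicit use of irreducibility of $G$ (hence of the open dense subset $PP^-$) to identify $\dim PP^-$ with $\dim G$, which follows since $G$ is connected.
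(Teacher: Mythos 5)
Your proof is correct and is precisely the argument the paper intends: the lemma is presented there as an immediate consequence (``In particular, we get the following'') of the product decomposition \eqref{eq.productdecomp} combined with the Levi decomposition $P = R_u(P) \rtimes (P \cap P^-)$. Your explicit dimension count, including the observation that openness of $PP^-$ in the irreducible variety $G$ gives $\dim PP^- = \dim G$, correctly fills in the details the paper leaves implicit.
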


\subsection{Dimension of $\UUU_P$ and $R_u(P)$ of a parabolic
subgroup $P$}
We give here a result which estimates the dimension of $\UUU_P$ and
$R_u(P)$ from below for a parabolic subgroup $P$.
The proof is based on the
following fact.
Let $\alpha$ be a simple root
and let $\beta$ be a positive root which is a linear combination
of simple roots different from $\alpha$. If $\alpha$ and $\beta$
are not perpendicular, then $\alpha + \beta$ is a positive root,
by \cite[Lemma~9.4 and Lemma~10.1]{Hu1978Introduction-to-Li}.

\begin{lemma}
	\label{lem.enoughUnipotentsInAParabolic}
	Assume that $G$ is a simple group
	and let $P$ be a parabolic subgroup that contains $B$.
	Then the following holds
	\begin{enumerate}[$i)$]
		\item	If $\rank(G) \geq 3$ and $P \neq B$, then
			$\dim \mathcal{U}_P \geq 2 \rank(G) + 1$.
		\item If $\rank(G) = 2$ and $B \neq P \neq G$, then
			\[
				\dim \UUU_P = \left\{
					\begin{array}{rl}
						4 & \textrm{if $G$ is of type $A_2$,} \\
						5 & \textrm{if $G$ is of type $B_2$,} \\
						7 & \textrm{if $G$ is of type $G_2$.} \\
					\end{array} \right.
			\]
	    	\item If $\rank(G) \geq 2$ and $P \neq G$, then  $\dim R_u(P) \geq 2$.
	\end{enumerate}
\end{lemma}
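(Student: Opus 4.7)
The approach rests on the dimension formula $\dim \UUU_P = |\Phi^+| + |\Phi_I^+|$, where $I\subseteq \Delta$ is the subset with $P = BW_I B$. To derive this, I would use the Levi decomposition $P = L \ltimes R_u(P)$, with $L \supseteq T$ the reductive Levi factor whose root system is $\Phi_I$ and whose rank equals $\rank G$. Since the Jordan decomposition in $P$ descends along $P \to P/R_u(P) = L$, an element $\ell u$ (with $\ell\in L$, $u\in R_u(P)$) is unipotent if and only if $\ell$ is, and this yields a variety isomorphism $\UUU_L \times R_u(P) \cong \UUU_P$. Combined with $\dim \UUU_L = \dim L - \rank L = 2|\Phi_I^+|$ (the formula recalled in Section~\ref{sec.notions}) and $\dim R_u(P) = |\Phi^+| - |\Phi_I^+|$ (its root-subgroup decomposition), this gives the claimed formula.

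With the formula in hand, the three parts become root-counting arguments. For $(i)$, the hypothesis $P\neq B$ forces $|\Phi_I^+|\geq 1$, so it suffices to verify $|\Phi^+|\geq 2\rank G$ for every irreducible root system of rank at least three. This is immediate type-by-type: $A_\ell$ has $\ell(\ell+1)/2$ positive roots, $B_\ell$ and $C_\ell$ have $\ell^2$, $D_\ell$ has $\ell(\ell-1)$, and the exceptional types $E_6, E_7, E_8, F_4$ have far more. For $(ii)$, the condition $B\neq P\neq G$ in rank two forces $|I|=1$ and hence $|\Phi_I^+|=1$, so the formula reduces to $\dim\UUU_P = |\Phi^+|+1$; the classification of rank-two root systems then gives the three values $4,5,7$ for $A_2, B_2, G_2$.

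For $(iii)$, I would use $\dim R_u(P) = |\Phi^+\setminus \Phi_I^+|$ and exhibit two elements of this set. Fix $\alpha \in \Delta\setminus I$ (which exists since $P\neq G$); then $\alpha$ itself is one. If $\Delta\setminus I$ contains a further simple root, we are immediately done. Otherwise $I = \Delta \setminus \{\alpha\}$ is nonempty (using $\rank G\geq 2$). Since $G$ is simple, its Dynkin diagram is connected, so there exists $\gamma\in I$ not perpendicular to $\alpha$. By the fact recalled just before the lemma statement, $\alpha+\gamma$ is then a positive root, and it cannot lie in $\Phi_I$ since its simple-root expansion involves $\alpha$.

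No individual step is technically demanding; the one place that requires a little thought is the final sub-case of $(iii)$, where connectedness of the Dynkin diagram (a consequence of simplicity of $G$) is essential in producing the second positive root outside $\Phi_I$.
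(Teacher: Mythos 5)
Your proof is correct and follows essentially the same route as the paper: the identity $\dim \UUU_P = \dim P - \rank(G) = |\Phi^+| + |\Phi_I^+|$ (equivalently, $\dim R_u(B) + (\dim R_u(B) - \dim R_u(P))$), followed by root counting in each part. The only real difference is in part $i)$, where the paper establishes $|\Phi^+| \geq 2\rank(G)$ uniformly --- the simple roots, the sums over adjacent pairs in the Dynkin tree, and one additional sum along a length-three path --- rather than by type-by-type inspection of the classification as you do; both arguments are sound.
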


\begin{proof}
	Assume that $P \neq B$.
	Since $\dim \UUU_P = \dim P - \rank(G)$ we get
	\[
		\dim \UUU_P =
				\dim R_u(B) + (\dim R_u(B) - \dim R_u(P)) \, .
	\]
	Note that $\dim R_u(B)$ is equal to the number of positive roots.
	In a Dynkin diagram the vertices correspond to the simple roots
	and there is one (or more) edges between two simple roots if and only
	if they are not perpendicular. For each pair of non-perpendicular
	simple roots $\alpha$, $\beta$, the sum $\alpha + \beta$ is again
	a (positive) root. Since any Dynkin diagram is a tree,
	the simple roots together with the above sums
	of pairs give $2 \rank(G) - 1$ positive roots.
	
	Assume that $\rank(G) \geq 3$ and $P \neq B$.
	Again, since any Dynkin diagram is a tree,
	one sees that there is a subgraph of the Dynkin diagram of $G$ of the form
	\[
		\xymatrix{ \alpha_1 \ar@{-}[r] & \alpha_2 \ar@{-}[r] & \alpha_3}
	\]
	and $\alpha_1$, $\alpha_3$ are not connected in the Dynkin diagram.
	Hence $\alpha_1 + \alpha_2$ and $\alpha_3$ are not perpendicular
	and thus the sum $\alpha_1 + \alpha_2 + \alpha_3$ is again a positive root.
	Thus we proved $\dim R_u(B) \geq 2 \rank(G)$.
	Since $P$ is not a Borel subgroup, we get
	$\dim R_u(B) -  \dim R_u(P) \geq 1$. These two inequalities yield $i)$.
	
	Assume that $\rank(G) = 2$ and $B \neq P \neq G$. Hence,
	we get $\dim R_u(B) - \dim R_u(P) = 1$,
	by \cite[\S30.2]{Hu1975Linear-algebraic-g}. Considering the
	classification of irreducible root systems of rank two and counting the number
	of positive roots in these root systems yield $ii)$.

	
	Assume that $\rank(G) \geq 2$ and $P \neq G$. Hence,
	there exists a simple root $\alpha$ such that
	$-\alpha$ is not a root of $P$. Since
	$\rank(G) \geq 2$ and since the root system is irreducible,
	there exists a simple root
	$\beta \neq \alpha$ such that $\alpha + \beta$ is a positive root.
	By~\cite[\S30.2]{Hu1975Linear-algebraic-g} it follows
	that $\alpha$ and
	$\alpha + \beta$ are distinct roots of $R_u(P)$, which proves $iii)$.
\end{proof}

\section{Two results on ${\C^+}$-equivariant morphisms of surfaces}\label{sec.C-equivmorphofsurf}

In this section we proof two results on ${\C^+}$-equivariant morphisms
of surfaces
that we use in the proof of Proposition~\ref{prop.key}.
If $S$ is an affine variety with a ${\C^+}$-action, then
we denote by $S \aquot {\C^+}$ the spectrum of the ring of
${\C^+}$-invariant functions on $S$. In general $S \aquot {\C^+}$
is an affine scheme which is not a variety. If the quotient morphism
$S \to S \aquot {\C^+}$ happens to be a principal ${\C^+}$-bundle,
then we denote the algebraic quotient by $S / {\C^+}$.
By Rentschler's Theorem, for a fixed point free action of ${\C^+}$ on the
affine plane $\AA^2$, the algebraic quotient of ${\C^+}$ is a trivial principal
${\C^+}$-bundle over the affine line $\AA \cong \AA^2 / {\C^+}$;
see \cite{Re1968Operations-du-grou}.

\begin{lemma}
	\label{lem.section_birational1}
	Let $S$ be an irreducible, quasi-affine surface and assume that
	${\C^+}$ acts without fixed point on $\AA^2$ and on $S$. If
	$f \colon \AA^2 \to S$ is a dominant and ${\C^+}$-equivariant morphism,
	then there exists a section $X \subseteq \AA^2$ of the algebraic
	quotient $\AA^2 \to \AA^2 / {\C^+}$ such that $f$ induces a
	birational morphism $X \to f(X)$.
\end{lemma}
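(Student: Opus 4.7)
The plan is to use Rentschler's Theorem followed by a generic-choice argument on polynomial sections.

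By Rentschler's Theorem (cited immediately before the lemma), we fix coordinates $\AA^2=\AA\times\CC$ such that ${\C^+}$ acts by translation in the second factor; then $p\colon\AA^2\to\AA^2/{\C^+}$ is the first projection onto $\AA$, and every section of $p$ is the graph $X_g=\{(x,g(x)):x\in\AA\}$ of a polynomial $g\in\CC[x]$. Since ${\C^+}$ acts freely on the surface $S$ and ${\C^+}$ is a special group in the sense of Serre, there is a dense open $V\subseteq S$ on which $V\to V/{\C^+}=:W$ is a trivial principal ${\C^+}$-bundle, say $V\cong W\times\CC$ with $W$ an affine curve. Since $f$ is dominant and ${\C^+}$-equivariant, $f^{-1}(V)\subseteq\AA^2$ is a dense, ${\C^+}$-invariant open, so $U:=p(f^{-1}(V))\subseteq\AA$ is a dense open and there is an induced dominant morphism $\bar f\colon U\to W$. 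After shrinking $U$, we may assume $\bar f$ is étale of some degree $d\geq 1$.

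If $d=1$, then $\bar f$ is birational and, for any $g$, the equality $f|_{X_g}(x_1)=f|_{X_g}(x_2)$ forces $\bar f(x_1)=\bar f(x_2)$ and hence $x_1=x_2$ generically, so $f|_{X_g}$ is generically injective, hence birational onto its image. Assume $d\geq 2$. In the chosen trivialization of $V$, ${\C^+}$-equivariance forces $f(x,y)=(\bar f(x),y+c(x))$ on $U\times\CC$ for some $c\in\OOO(U)$. Thus for $x_1\neq x_2$ in $U$, the equality $f(x_1,g(x_1))=f(x_2,g(x_2))$ holds if and only if $(x_1,x_2)$ lies in $R:=(U\times_{W}U)\setminus\Delta$ and $(g+c)(x_1)=(g+c)(x_2)$. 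Since $\bar f$ is generically finite dominant of degree $d\geq 2$, the scheme $R$ is one-dimensional; let $R_1,\ldots,R_r$ denote its irreducible components, each a curve not contained in the diagonal.

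For each $i$, the $\CC$-linear map $\phi_i\colon\CC[x]\to\OOO(R_i)$, $g\mapsto(g(x_1)-g(x_2))|_{R_i}$, has proper kernel, since $R_i\not\subseteq\Delta$ implies $(x_1-x_2)|_{R_i}\not\equiv 0$. Hence the set $A_i:=\phi_i^{-1}\bigl(-(c(x_1)-c(x_2))|_{R_i}\bigr)$ is either empty or a proper affine subspace of $\CC[x]$. Since a $\CC$-vector space is not covered by finitely many proper affine subspaces, we can choose $g\in\CC[x]\setminus\bigcup_{i=1}^r A_i$. For this $g$, the function $(g+c)(x_1)-(g+c)(x_2)$ is non-zero regular on each irreducible curve $R_i$ and thus vanishes on a finite set; projecting to the first coordinate, only finitely many $x_1\in U$ admit some $x_2\neq x_1$ in $U$ with $f(x_1,g(x_1))=f(x_2,g(x_2))$. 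Therefore $f|_{X_g}\colon X_g\to f(X_g)$ is generically injective, hence birational.

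The main obstacle is arranging the local trivialization over a generic open of $S$ (using that ${\C^+}$ is special and acts freely) so that $f$ takes the simple form $(x,y)\mapsto(\bar f(x),y+c(x))$; once this normal form is in place, the rest is a dimension count on the affine space of polynomial sections.
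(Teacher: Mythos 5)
Your proof is correct and follows essentially the same route as the paper's: both pass to a ${\C^+}$-equivariantly trivializing invariant open $V\subseteq S$ (the paper cites Fauntleroy--Magid for this step, which is exactly the point you flag as the main obstacle), put $f$ in the normal form $(x,y)\mapsto(\bar f(x),\,y+c(x))$, and then choose a polynomial section separating the sheets of $\bar f$. The only difference is in the last step and is cosmetic: the paper separates the points of a single fibre of $\bar f$ and spreads out \'etale-locally, whereas you make the choice globally by avoiding finitely many proper affine subspaces of $\CC[x]$.
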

	
\begin{proof}
	By \cite[Lemma~1]{FaMa1978Quasi-affine-surfa}, there exists
	a ${\C^+}$-invariant open subset $V \subseteq S$ and a smooth affine
	curve $U$ such that $V$ and $U \times {\C^+}$
	are ${\C^+}$-equivariantly isomorphic.
	Hence, $f$ restricts on $f^{-1}(V)$ to a morphism of the form
	\[	
		(f^{-1}(V) / {\C^+}) \times {\C^+} \longrightarrow U \times {\C^+} \, ,
		\quad (x, t) \longmapsto (\bar{f}(x), t + q(x)),
	\]
	where $q$ is a function defined on the curve
	$f^{-1}(V) / {\C^+}$
	and $\bar{f}$ is the morphism $f^{-1}(V) / {\C^+} \to U$
	induced by $f$.
	Therefore, it suffices to find a
	function $p$ on $\AA \cong \AA^2 / {\C^+}$
	(which corresponds to a section
	of $\AA^2 \to \AA^2 / {\C^+}$) such that the morphism
	\begin{equation}
		\label{eq.the_morph}
		f^{-1}(V) / {\C^+} \longrightarrow U \times {\C^+} \, , \quad
		x \longmapsto (\bar{f}(x), p(x)+q(x))
	\end{equation}
	is birational onto its image. After shrinking $V$, we can assume
	that $\bar{f}$ is finite and \'etale. Fix $u_0 \in U$. One can choose $p$
	such that the points
	\[
		(u_0, p(x_1) + q(x_1)) \, , \ldots , (u_0, p(x_k) + q(x_k))
	\]
	are all distinct, where $x_1, \ldots, x_k$ denote the elements
	of the fiber of $\bar{f}$ over $u_0$.
	The same is still true for elements in a neighbourhood
	of $u_0$, as one can see by choosing an \'etale neighbourhood
	of $u_0$ in $U$ which trivializes
	$\bar{f}$ at $u_0$ with respect to the \'etale topology; see
	\cite[Chp. I, Corollary~3.12]{Mi1980Etale-cohomology}.
	Hence \eqref{eq.the_morph} is injective on an open subset
	of $f^{-1}(V) / {\C^+}$, i.e.~it is birational onto its image.
\end{proof}

\begin{lemma}
	\label{lem.section_embedd}
	Let $S$ be an irreducible, quasi-affine surface and assume that
	${\C^+}$ acts without fixed point on $\AA^2$ and on $S$. If
	$f \colon \AA^2 \to S$ is a {${\C^+}$-equivariant}
	birational morphism,
	then there exists a section $X \subseteq \AA^2$ of
	$\AA^2 \to \AA^2 / {\C^+}$ such that $f$ restricts to an
	embedding on $X$.
\end{lemma}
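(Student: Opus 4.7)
The plan is to adapt the proof of Lemma~\ref{lem.section_birational1} above, using the stronger birationality hypothesis on $f$ to upgrade the conclusion from ``birational onto its image'' to ``closed embedding''. First I would invoke \cite[Lemma~1]{FaMa1978Quasi-affine-surfa} to obtain a $\C^+$-invariant open subset $V \subseteq S$ with a $\C^+$-equivariant isomorphism $V \cong U \times \C^+$ over a smooth affine curve $U$, and Rentschler's theorem to identify $\AA^2 \cong \AA \times \C^+$ as a $\C^+$-variety. Since $f$ is birational, after shrinking $V$ I may assume that $f|_{f^{-1}(V)} \colon f^{-1}(V) \to V$ is an isomorphism, so that under the identifications $f|_{f^{-1}(V)}$ takes the form $(y,s) \mapsto (y,\, s + q(y))$ on $f^{-1}(V) = U \times \C^+$ for some $q \in \OOO(U)$. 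The complement $\AA^2 \setminus f^{-1}(V) = \{a_1,\ldots,a_n\} \times \C^+$ is a finite disjoint union of orbits $F_i = \{a_i\} \times \C^+$, and since $\C^+$ acts freely on $S$ and $f$ is equivariant, each $F_i$ is mapped isomorphically onto a closed $\C^+$-orbit $O_i \subseteq S \setminus V$.

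A section $X \subseteq \AA^2$ of $\AA^2 \to \AA^2/\C^+$ is the graph of a polynomial $p \in \C[y]$, meeting each $F_i$ in the single point $x_i = (a_i, p(a_i))$. On $X \cap f^{-1}(V)$, the map $f|_X$ reads $y \mapsto (y, p(y) + q(y))$, which is a closed embedding into $V$. Thus to make $f|_X \colon X \to S$ a closed embedding it suffices to ensure (a) injectivity of $f|_X$, (b) immersivity of $f|_X$ at each $x_i$, and (c) closedness of $f(X)$ in $S$. I claim that condition (c) is automatic: any morphism from $\AA$ to an affine variety has closed image (the extension to $\PP^1 \to \PP^N$ sends $\infty$ to the hyperplane at infinity), so, via an open embedding $S \hookrightarrow \bar S$ into an affine closure, $f(X)$ is closed in $\bar S$ and hence closed in $S$.

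The remaining conditions (a) and (b) amount to finitely many generic conditions on the jet data $(p(a_i), p'(a_i))_{i=1}^n$. For (a), the only potential failure is that two distinct bad points $x_i \neq x_j$ land in a common image orbit $O_i = O_j$; because $f|_{F_i}$ and $f|_{F_j}$ are $\C^+$-equivariant isomorphisms onto this orbit, the condition $f(x_i) \neq f(x_j)$ is a non-trivial affine condition on the pair $(p(a_i), p(a_j))$. For (b), the kernel of $df_{x_i}$ (if non-zero) is a line transverse to $T_{x_i} F_i$ (because $f|_{F_i}$ is an isomorphism), while $T_{x_i} X$ is a horizontal complement to $T_{x_i} F_i$ which varies with $p'(a_i)$; a generic choice of $p'(a_i)$ ensures $T_{x_i} X \neq \ker df_{x_i}$. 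The main obstacle I anticipated --- closedness of the image --- dissolves once (c) is noted, after which the remaining open generic conditions on the finite jet data can be simultaneously satisfied by a suitable polynomial $p$.
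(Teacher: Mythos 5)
Your proof is correct and follows essentially the same strategy as the paper's: reduce to finitely many bad $\C^+$-orbits via equivariance, birationality and freeness of the action, impose finitely many non-trivial affine/open conditions on the section there, and note that properness is automatic for a copy of $\AA$ mapping to a quasi-affine variety. The only cosmetic difference is that the paper restricts attention to the one-parameter family of linear sections $Z_\alpha=\{(x,\alpha x)\}$ and argues genericity in $\alpha$, whereas you allow arbitrary polynomial graphs and prescribe the jets $(p(a_i),p'(a_i))$ by interpolation.
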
	

\begin{proof}
	We identify $\AA^2$ with $\AA \times {\C^+}$
	and consider it as a trivial principal ${\C^+}$-bundle over $\AA$.
	For $\alpha \in {\C^*}$ let
	\[
		Z_{\alpha} = \{ \,Ê(x, \alpha x) \ | \ x \in \AA \, \}
		\subseteq \AA \times {\C^+} \, .
	\]
	We claim, that for generic $\alpha \in {\C^*}$
	the map $f$ restricts to an embedding on
	$Z_{\alpha}$. In other words, we claim that $f$ restricted to $Z_\alpha$
	is injective and immersive for generic $\alpha$ (the properness
	is then automatically satisfied, since $Z_\alpha \cong \AA$).
	The claim then implies the statement of the lemma.
	
	Let us first prove injectivity.
	Since $f$ is ${\C^+}$-equivariant
	and birational, there exists a ${\C^+}$-invariant open subset of
	$\AA \times {\C^+}$
	that is mapped isomorphically onto a ${\C^+}$-invariant open subset of $S$.
	Since ${\C^+}$ acts without fixed point, it follows that there
	are only finitely many ${\C^+}$-orbits $F$ in $S$ such that
	the inverse image $f^{-1}(F)$ consists of more than one ${\C^+}$-orbit.
	Thus, it is enough to show that $f$ is injective on $f^{-1}(F) \cap Z_\alpha$
	for fixed $F$ and generic $\alpha$ in $\C^\ast$.
	So let $F \subseteq S$ be a ${\C^+}$-orbit
	such that there exist $k > 1$ and
	distinct $x_1, \ldots, x_k \in \AA$ such that $f^{-1}(F)$
	is the union of the lines $L_i = \{ x_i \} \times {\C^+}$,
	$i = 1, \ldots, k$. Moreover, there exist
	$\beta_i \in {\C^+}$ such that $f |_{L_i} \colon L_i \to F$ is given by
	$t \mapsto t + \beta_i$, where we have identified the orbit $F$ with ${\C^+}$.
	Injectivity of $f$ on $f^{-1}(F) \cap Z_{\alpha}$ for generic $\alpha$ follows,
	since for generic $\alpha$ we have
	\[
		\alpha x_i + \beta_i
		\neq \alpha x_j + \beta_j
		\quad
		\textrm{for all $i \neq j$} \, .
	\]
	
	Let us prove immersivity. As already mentioned, there
	exists an open ${\C^+}$-invariant subset
	$U \subseteq \AA \times {\C^+}$
	such that $f$ restricts to an open injective immersion on $U$.
	Let $x_0 \in \AA$ such that $\{ x_0 \} \times {\C^+}$ lies in
	the complement of $U$ in $\AA \times {\C^+}$.
	Since there are only finitely many such $x_0 \in \AA$, it is enough to show
	that for generic $\alpha \in {\C^*}$ the restriction
	$f |_{Z_{\alpha}}$ is immersive in the point $(x_0, \alpha x_0)$.
	Since ${\C^+}$ acts without fixed
	point on $S$ and since $f$ is ${\C^+}$-equivariant, the kernel of the
	differential of $f$ is at most one-dimensional in every point of
	$\AA \times {\C^+}$.
	Since the tangent direction of $Z_{\alpha}$
	in the point $(x_0, \alpha x_0)$ is given by
	$(1, \alpha)$, we proved that $f |_{Z_{\alpha}}$ is immersive
	in $(x_0, \alpha x_0)$ for generic $\alpha$.
	This proves the immersivity.
\end{proof}

\newcommand{\etalchar}[1]{$^{#1}$}
\providecommand{\bysame}{\leavevmode\hbox to3em{\hrulefill}\thinspace}
\providecommand{\MR}{\relax\ifhmode\unskip\space\fi MR }
\providecommand{\MRhref}[2]{%
  \href{http://www.ams.org/mathscinet-getitem?mr=#1}{#2}
}

\end{document}